\newtheorem{thm}{Theorem}[section]
\newtheorem{cor}[thm]{Corollary}
\newtheorem{lem}[thm]{Lemma}
\newtheorem{prop}[thm]{Proposition}
\newtheorem{claim}[thm]{Claim}
\newtheorem{fact}[thm]{Fact}
\newtheorem{defn}[thm]{Definition}
\theoremstyle{definition}
\newtheorem{rem}{Remark}[section]
\newcommand{\nn}{\mathbb{N}}
\newcommand{\ee}{\varepsilon}
\newcommand{\con}{\smallfrown}
\newcommand{\meg}{\geqslant}
\newcommand{\mik}{\leqslant}
\newcommand{\ave}{\mathbb{E}}
\newcommand{\hj}{\mathrm{HJ}}
\newcommand{\dcs}{\mathrm{DCS}}
\newcommand{\gr}{\mathrm{GR}}
\begin{document}

\title{Measurable events indexed by words}

\author{Pandelis Dodos, Vassilis Kanellopoulos and Konstantinos Tyros}

\address{Department of Mathematics, University of Athens, Panepistimiopolis 157 84, Athens, Greece}
\email{pdodos@math.uoa.gr}

\address{National Technical University of Athens, Faculty of Applied Sciences,
Department of Mathematics, Zografou Campus, 157 80, Athens, Greece}
\email{bkanel@math.ntua.gr}

\address{Department of Mathematics, University of Toronto, Toronto, Canada M5S 2E4}
\email{ktyros@math.toronto.edu}

\thanks{2010 \textit{Mathematics Subject Classification}: 05D10, 60C05.}
\thanks{\textit{Key words}: words, Carlson--Simpson trees, independence.}

\maketitle


\begin{abstract}
For every integer $k\meg 2$ let $[k]^{<\nn}$ be the set of all \textit{words} over $k$. A \textit{Carlson--Simpson tree} of $[k]^{<\nn}$
of dimension $m\meg 1$ is a subset of $[k]^{<\nn}$ of the form
\[ \{w\}\cup \big\{w^{\con}w_0(a_0)^{\con}...^{\con}w_{n}(a_n): n\in \{0,...,m-1\} \text{ and } a_0,...,a_n\in [k]\big\} \]
where $w$ is a word over $k$ and $(w_n)_{n=0}^{m-1}$ is a finite sequence of left variable words over $k$. We study the behavior of a family of
measurable events in a probability space indexed by the elements of a Carlson--Simpson tree of sufficiently large dimension. Specifically we show
the following.
\medskip

\noindent \textit{For every integer $k\meg 2$, every $0<\ee\mik 1$ and every integer $n\meg 1$ there exists a strictly positive constant
$\theta(k,\ee,n)$ with the following property. If $m$ is a given positive integer, then there exists an integer $\mathrm{Cor}(k,\ee,m)$
such that for every Carlson--Simpson tree $T$ of $[k]^{<\nn}$ of dimension at least $\mathrm{Cor}(k,\ee,m)$ and every family $\{A_t:t\in T\}$
of measurable events in a probability space $(\Omega,\Sigma,\mu)$ satisfying $\mu(A_t)\meg \ee$ for every $t\in T$, there exists a 
Carlson--Simpson tree $S$ of dimension $m$ with $S\subseteq T$ and such that for every nonempty $F\subseteq S$ we have}
\[ \mu\Big( \bigcap_{t\in F} A_t\Big) \meg \theta(k,\ee,|F|). \]

The proof is based, among others, on the density version of the Carlson--Simpson Theorem established recently by the authors, as well as,
on a partition result -- of independent interest -- closely related to the work of T. J. Carlson, and H. Furstenberg and Y. Katznelson.
The argument is effective and yields explicit lower bounds for the constants $\theta(k,\ee,n)$. 
\end{abstract}


\section{Introduction}

\numberwithin{equation}{section}

\subsection{Overview}

The present paper -- which is the sequel to \cite{DKT1,DKT2} -- is devoted to the study of the structure of a family of measurable events
in a probability space indexed by a \textit{Ramsey space} \cite{C}. The most classical and illuminating case is when the events are indexed
by the natural numbers. Specifically, let $(\Omega,\Sigma,\mu)$ be a probability space and $\{A_i:i\in\nn\}$ be a family of measurable events
in $(\Omega,\Sigma,\mu)$ satisfying $\mu(A_i)\meg \ee>0$ for every $i\in\nn$. Using Ramsey's Theorem \cite{Ra} and elementary probabilistic
estimates, for every $0<\theta<\ee$ we may select an infinite subset $L$ of $\nn$ such that for every integer $n\meg 1$ and every subset $F$
of $L$ of cardinality $n$ we have
\begin{equation} \label{e11}
\mu\Big( \bigcap_{i\in F} A_i\Big) \meg \theta^n.
\end{equation}
In other words, the events in the family $\{A_i:i\in L\}$ are at least as correlated as if they were independent. 

Now suppose that the events are indexed by another Ramsey space $\mathcal{S}$. A natural problem is to decide whether the aforementioned result
is valid in the new setting. Namely, given a family $\{A_s:s\in\mathcal{S}\}$ of measurable events in a probability space $(\Omega,\Sigma,\mu)$
satisfying $\mu(A_s)\meg\ee>0$ for every $s\in\mathcal{S}$, is it possible to find a ``substructure'' $\mathcal{S}'$ of $\mathcal{S}$ such that
the events in the family $\{A_s:s\in\mathcal{S}'\}$ are highly correlated? And if yes, then one would like to get explicit (and, hopefully, optimal)
lower bounds for their joint probability.

In all cases of interest, this problem is essentially equivalent to that of finding ``copies'' of given configurations inside dense
sets of discrete structures, a theme of fundamental importance in Ramsey Theory. The equivalence between the two perspectives is discussed in
detail in \cite[\S 8.1]{DKT3} and is based on the ``regularity method"\!, a remarkable discovery of E. Szemer\'{e}di \cite{Sz-Reg} asserting
that dense sets of discrete structures are inherently pseudorandom.

\subsection{The main result}

Our goal in this paper is to study the above problem when the events are indexed by words; recall that, for a given integer $k\meg 2$,
a \textit{word over $k$} is just a finite sequence having values in $[k]:=\{1,...,k\}$. If $n\in\nn$, then $[k]^n$ stands for the set
of words over $k$ of length $n$. The set of all words over $k$ is denoted by $[k]^{<\nn}$. 

In this context the most natural and fruitful notion of ``substructure'' is that of a \textit{Carlson--Simpson tree} \cite{CS,DKT3}.
To recall the definition we need, first, to introduce some pieces of notation and some terminology. Specifically, let $k\meg 2$ and fix a letter
$x$ that we regard as a variable. A \textit{variable word over $k$} is a finite sequence having values in $[k]\cup\{x\}$ where the letter $x$
appears at least once, while a \textit{left variable word over $k$} is a variable word over $k$ whose leftmost letter is the variable $x$. 
If $w$ is a variable word and $a\in[k]$, then $w(a)$ is the word over $k$ obtained by substituting all appearances of the letter $x$ in $w$
by $a$. The concatenation of two words $u$ and $v$ over $k$ is denoted by $u^{\con}v$.
\begin{defn} \label{d11}
Let $k\in\nn$ with $k\meg 2$. A \emph{Carlson--Simpson sequence over $k$} is a finite sequence $(w, w_0,..., w_{m-1})$ where $m$ is
a positive integer, $w$ is a word over $k$ and $w_0,...,w_{m-1}$ are left variable words over $k$. A subset of $[k]^{<\nn}$ of the form
\begin{equation} \label{e12}
\{w\}\cup \big\{w^{\con}w_0(a_0)^{\con}...^{\con}w_{n}(a_n): n\in \{0,...,m-1\} \text{ and } a_0,...,a_n\in [k]\big\}
\end{equation}
where $(w,w_0,...,w_{m-1})$ is a Carlson--Simpson sequence over $k$, will be called a \emph{Carlson--Simpson tree} of $[k]^{<\nn}$.
\end{defn}
It is easy to see that the Carlson--Simpson sequence $(w,w_0,...,w_{m-1})$ that generates a Carlson--Simpson tree $T$ via formula
\eqref{e12} is unique. The corresponding positive integer $m$ will be called the \emph{dimension} of $T$ and will be denoted by $\dim(T)$.

We are ready to state the first main result of the paper.
\begin{thm} \label{t12}
For every integer $k\meg 2$, every $0<\ee\mik 1$ and every integer $n\meg 1$ there exists a strictly positive constant $\theta(k,\ee,n)$
with the following property. If $m$ is a given positive integer, then there exists an integer $\mathrm{Cor}(k,\ee,m)$ such that for every
Carlson--Simpson tree $T$ of $[k]^{<\nn}$ of dimension at least $\mathrm{Cor}(k,\ee,m)$ and every family $\{A_t:t\in T\}$ of measurable
events in a probability space $(\Omega,\Sigma,\mu)$ satisfying $\mu(A_t)\meg \ee$ for every $t\in T$, there exists a  Carlson--Simpson
tree $S$ of dimension $m$ with $S\subseteq T$ and such that for every nonempty $F\subseteq S$ we have
\begin{equation} \label{e13}
\mu\Big( \bigcap_{t\in F} A_t\Big) \meg \theta(k,\ee,|F|).
\end{equation}
\end{thm}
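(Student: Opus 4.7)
The plan is to prove Theorem~\ref{t12} by a two-step scheme: an elementary probabilistic estimate will provide density of ``good'' subsets inside any Carlson--Simpson tree, and a partition theorem will then upgrade density to uniformity. I would set $\theta(k,\ee,j):=\ee^j/2$ (up to lower-order adjustments) and, for each fixed size $j\meg 1$, produce a sub-Carlson--Simpson tree on which every $j$-subset $F$ satisfies $\mu(\bigcap_{t\in F}A_t)\meg\theta(k,\ee,j)$. Iterating the passage to a sub-tree for each $j\in\{1,2,\dots,(k^{m+1}-1)/(k-1)\}$ would complete the proof and determine the bound $\mathrm{Cor}(k,\ee,m)$ as an iterated composition of the quantitative estimates coming from the partition step.

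\textbf{Step 1 (probabilistic estimate).}
Applying Jensen's inequality to $f(\omega):=\sum_{t\in T}\mathbf{1}_{A_t}(\omega)$ yields
\[
\sum_{t_1,\dots,t_j\in T}\mu\big(A_{t_1}\cap\dots\cap A_{t_j}\big)=\int f^j\,d\mu\meg\Big(\int f\,d\mu\Big)^{\!j}\meg\ee^j|T|^j.
\]
After discarding the negligible contribution of tuples with repeated coordinates and applying Markov's inequality, one sees that a positive fraction -- bounded below by a constant $c(\ee,j)>0$ -- of the $j$-element subsets of $T$ satisfy $\mu(\bigcap_{t\in F}A_t)\meg\ee^j/2$. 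Critically, this estimate is \emph{hereditary}: it remains valid with $T$ replaced by any subset (a fortiori, any sub-Carlson--Simpson tree) on which the events still have density $\meg\ee$.

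\textbf{Step 2 (partition argument) and main obstacle.}
Next I would color each $j$-subset $F\subseteq T$ as \emph{good} if $\mu(\bigcap_{t\in F}A_t)\meg\ee^j/2$ and \emph{bad} otherwise, and apply the refinement of the Furstenberg--Katznelson partition theorem -- in a form that takes colorings of $j$-subsets, rather than of vertices, as input -- to produce a sub-Carlson--Simpson tree $S\subseteq T$ of prescribed dimension on which this coloring is monochromatic. The hereditary estimate of Step~1 rules out the ``all bad'' option, forcing $S$ to be all-good and yielding the desired uniform bound at level $j$. The main obstacle, and the technical heart of the paper, is precisely this refined partition theorem: the classical Furstenberg--Katznelson result gives a monochromatic sub-CS tree only for colorings of vertices, while here one must handle arbitrary $j$-subsets. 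I would expect this refinement to be proved by induction on $j$, invoking the density Carlson--Simpson theorem to convert positive density inside a single color class into an embedded sub-CS tree at each step. The asymmetry of left variable words (extensions are only allowed on the right) is what makes the alignment between a monochromatic structure on $j$-subsets and a CS tree of prescribed dimension delicate, and is reflected in the iterated-composition nature of the final bound $\mathrm{Cor}(k,\ee,m)$.
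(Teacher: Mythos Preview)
Your Step~2 invokes a Ramsey theorem that does not exist, and this is the essential gap. You propose to $2$-color the $j$-subsets of $T$ and pass to a sub-Carlson--Simpson tree on which this coloring is monochromatic. The paper explicitly shows this is impossible already for $j=2$ and $k=2$: color a pair red if it has the form $\{w(1)^{\con}u(2),\,w(2)^{\con}u(1)\}$ for variable words $w,u$, and blue otherwise; every Carlson--Simpson tree of dimension $\meg 2$ contains pairs of both colors. So there is no hope of a monochromatic sub-tree for colorings of $j$-subsets, and your hereditary density estimate from Step~1, however correct, cannot be fed into a Ramsey step that does not exist.

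The paper's workaround is the real content of the argument and is quite different from what you sketch. One first categorizes finite subsets of a CS tree by a notion of \emph{type} (a combinatorial invariant refining cardinality), and the Furstenberg--Katznelson refinement (Theorem~\ref{t13}) yields a sub-CS tree on which the good/bad coloring is constant \emph{within each type class}, not globally. One must then show, separately for every relevant type $\tau$, that the $\tau$-class contains at least one good set. This is where the density Carlson--Simpson theorem enters, but not as you suggest: it is used inside a carefully engineered construction (passing through an auxiliary alphabet $[k]^{q_p}$ and maps $\Phi_{p,\tau}$) that manufactures a set of the prescribed type inside the image of a Carlson--Simpson line over the larger alphabet. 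The Jensen/averaging estimate you propose gives good $j$-subsets somewhere, but gives no control over their type, so it cannot close the argument. Finally, arbitrary $F\subseteq S$ are handled not directly but by embedding them into \emph{basic sets} of controlled width and dimension (Proposition~\ref{p61}); the per-type bounds for basic sets then transfer to $F$. A byproduct is that the resulting $\theta(k,\ee,n)$ are far weaker than $\ee^n/2$; the paper obtains $\ee^2-o(1)$ only for $n=2$, via a separate and more delicate analysis.
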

Of course, the main point is that, for fixed parameters $k$ and $\ee$, the lower bound on the joint probability of the events $\{A_t:t\in F\}$
given in \eqref{e13} depends only on the cardinality of the set $F$ and not on the dimension of the Carlson--Simpson tree $S$. The argument is
effective and yields explicit estimates for the constants $\theta(k,\ee,n)$. These estimates are admittedly rather weak and it is an important
problem to obtain ``civilized'' bounds. We point out, however, that if we restrict our attention to a certain class of subsets of Carlson--Simpson
trees, then we get optimal lower bounds. This is the content of \S 10 and \S 11 in the main text. One of the consequences of our analysis is
that the constant $\theta(k,\ee,2)$ can be chosen to be $\ee^2-o(1)$, an estimate which is clearly sharp.

We also note that Theorem \ref{t12} implies, naturally, its counterpart for combinatorial subspaces. Recall that, for a pair of positive integers
$n,m$ with $n\meg m$, an \textit{$m$-dimensional combinatorial subspace} of $[k]^n$ is a subset of $[k]^n$ of the form
\begin{equation} \label{e14}
\{ w(a_0,...,a_{m-1}): a_0,...,a_{m-1}\in [k]\}
\end{equation}
where $w$ is an $m$-variable word over $k$ of length $n$ (see \S 2.2 for the precise definition). We have the following corollary.
\begin{cor} \label{c13}
For every $k,m\in\nn$ with $k\meg 2$ and $m\meg 1$, and every $0<\ee\mik 1$ let $\mathrm{Cor}(k,\ee,m)$ be as in Theorem \ref{t12}.
If $N\meg \mathrm{Cor}(k,\ee,m)$ and $\{A_w:w\in [k]^N\}$ is a family of measurable events in a probability space $(\Omega,\Sigma,\mu)$
satisfying $\mu(A_w)\meg \ee$ for every $w\in [k]^N$, then there exists an $m$-dimensional combinatorial subspace $V$ of $[k]^N$
such that for every $n\in \{1,...,|V|\}$ and every subset $F$ of $V$ of cardinality $n$ we have
\begin{equation} \label{e15}
\mu\Big( \bigcap_{w\in F} A_w\Big) \meg \theta(k,\ee,n)
\end{equation}
where $\theta(k,\ee,n)$ is as in Theorem \ref{t12}.
\end{cor}
It is useful to compare Corollary \ref{c13} with the classical density Hales--Jewett Theorem \cite{FK2}. One of its equivalent forms -- see, 
e.g., \cite[Proposition 2.1]{FK2} -- asserts that for every integer $k\meg 2$ and every $0<\ee\mik 1$ there exist a strictly positive constant 
$c(k,\ee)$ and a positive integer $n_0$ such that for every $N\meg n_0$ and every family $\{A_w:w\in [k]^N\}$ of measurable events having all 
probability at least $\ee$, there exists a combinatorial line $L$ of $[k]^N$ (that is, $L$ is a $1$-dimensional combinatorial subspace of
$[k]^N$) such that 
\begin{equation} \label{e16}
\mu\Big( \bigcap_{w\in L} A_w\Big) \meg c(k,\ee). 
\end{equation}
Thus we see that Corollary \ref{c13} extends the density Hales--Jewett Theorem and establishes correlation for the events over an arbitrary 
finite configuration and with a uniform lower bound.

\subsection{On the proof of Theorem \ref{t12}}

The first basic ingredient of the proof of Theorem \ref{t12} is the density version of the Carlson--Simpson Theorem established, recently,
in \cite{DKT3}. The second basic ingredient is a partition result closely related to the work of T. J. Carlson \cite{C}, and H. Furstenberg
and Y. Katznelson \cite{FK1}.

Before we state this result, let us start with a brief motivating discussion. Suppose that we color the set of all pairs of,
say, $[2]^{<\nn}$. Is it then possible to find a Carlson--Simpson tree of $[2]^{<\nn}$ of large dimension all of whose pairs are
of the same color? This natural Ramsey-type problem is easily seen to have a negative answer. Indeed, color red all pairs which are
of the form $\{w(1)^{\con}u(2),w(2)^{\con}u(1)\}$ where $w$ and $u$ are variable words over $2$; color the remaining pairs blue.
Clearly, every Carlson--Simpson tree of $[2]^{<\nn}$ of dimension at least $2$ contains pairs of both colors.

In spite of the existence of pathological colorings, there is non-trivial information on the aforementioned problem. The central
idea -- which has proven to be highly valuable in related parts of Ramsey Theory; see, e.g., \cite{Bl,Ga} -- is to categorize all
pairs of $[2]^{<\nn}$ (and, more generally, all subsets of $[k]^{<\nn}$ of a given cardinality) in a list of classes each of which
has the Ramsey property. This can be done with the help of the notion of type\footnote[1]{We remark that the term ``type" is not used
in \cite{FK1}. Actually, this notion is not named at all either in \cite{FK1}, or in other places in the literature.}, introduced
in \cite[\S 2]{FK1}, which we are about to recall.

Let $k,q\in\nn$ with $k\meg 2$ and $q\meg 1$. Also let $\mathcal{L}$ be a set of cardinality $q$ which is disjoint from $[k]$ and denote by
$\mathrm{W}(k,\mathcal{L})$ the set of all words over $[k]\cup \mathcal{L}$, that is, all finite sequence having values in $[k]\cup \mathcal{L}$.
A \textit{type} of $\mathrm{W}(k,\mathcal{L})$ is a nonempty finite sequence in $\mathcal{L}$ having no consecutive multiple appearances of the
same letter. For instance, if $\mathcal{L}=\{\lambda_1,\lambda_2\}$, then $(\lambda_1,\lambda_2,\lambda_1)$ is a type but $(\lambda_1,\lambda_1,\lambda_2)$
is not. For every $w\in \mathrm{W}(k,\mathcal{L})\setminus [k]^{<\nn}$ we assign its type as follows. First we erase all letters of $w$ which belong
to $[k]$, then we shorten the runs of the same letters of $\mathcal{L}$ to singletons and, finally, we push everything back together. For example,
the type of $(1,\lambda_1, 2, \lambda_1,5, \lambda_2,7, \lambda_2,\lambda_1,8,\lambda_3)$ is the word $(\lambda_1, \lambda_2, \lambda_1, \lambda_3)$. 

Now let $n$ be a positive integer and $\mathbf{w}=(w, w_0,...,w_{n-1})$ be a Carlson--Simpson sequence over $k$. Also let
$v=(a_0,...,a_{m-1})\in \mathrm{W}(k,\mathcal{L})$ be a word of length at most $n$. We set
\begin{equation} \label{e17}
\mathbf{w}(v)=w^\con w_0(a_0)^\con w_1(a_1)^\con...^\con w_{m-1}(a_{m-1})
\end{equation}
with the convention that $\mathbf{w}(v)=w$ if $v$ is the empty word. Finally let
\begin{equation} \label{e18}
\mathbf{w}(k,\mathcal{L})=\big\{\mathbf{w}(v): v \in \mathrm{W}(k,\mathcal{L}) \text{ is of length at most } n\big\}.
\end{equation}
We are ready to state the second main result of the paper (see \S 2.3 for unexplained notation and terminology). 
\begin{thm} \label{t14}
Let $k,q,d,r\in\nn$ with $k\meg 2$ and $q,d,r\meg 1$. Then there exists a positive integer $N$ with the following property. If $n\meg N$ and
$\mathcal{L}$ is a set of cardinality $q$ which is disjoint from $[k]$, then for every Carlson--Simpson sequence $\mathbf{w}=(w,w_0,...,w_{n-1})$
over $k$ and every $r$-coloring of \ $\mathbf{w}(k,\mathcal{L})$ there exists a Carlson--Simpson subsequence $\mathbf{v}=(v,v_0,...,v_{d-1})$
of \ $\mathbf{w}$ such that for every type $\tau$ of \ $\mathrm{W}(k,\mathcal{L})$ the set of all words in $\mathbf{v}(k,\mathcal{L})$ of type
$\tau$ is either empty or monochromatic. The least integer $N$ with this property will be denoted by $\mathrm{FK}(k,q,d,r)$.

Moreover, there exists a primitive recursive function $\varphi:\nn^4\to\nn$ belonging to the class $\mathcal{E}^{6}$ of Grzegorczyk's
hierarchy such that 
\begin{equation} \label{e19}
\mathrm{FK}(k,q,d,r)\mik \varphi(k,q,d,r)
\end{equation}
for every $k,q,d,r\in\nn$ with $k\meg 2$ and $q,d,r\meg 1$.
\end{thm}
Theorem \ref{t14} is a finite ``left-variable word" version of \cite[Theorem 2.7]{FK1} and is closely related to \cite[Lemma 5.9]{C}.
We also note that an infinite extension of Theorem \ref{t14} -- concerning both left and right variable words -- has been obtained by
N. Hindman and R. McCutcheon \cite{HM}. In particular, Theorem \ref{t14} can be derived by \cite[Theorem 2.9]{HM} via a standard compactness
argument but, of course, this reduction is ineffective and gives no information on the numbers $\mathrm{FK}(k,q,d,r)$.

\subsection{Structure of the paper}

The paper is organized as follows. In \S 2 we set up our notation and terminology and we gather some background material needed in the rest of
the paper. In \S 3 we give the proof of Theorem \ref{t14}. As we have already pointed out, Theorem \ref{t14} is the main tool for the analysis
of the Ramsey properties of various families of subsets of Carlson--Simpson trees. This analysis is of independent interest and is carried out
in \S4, \S 5 and \S 6. The reader will find in \S 4.1 a discussion on the content of these sections.

The next three sections are devoted to the proof of Theorem \ref{t12}. The main bulk of the argument is contained in \S 7 and is heavily based
on the material developed in the previous sections. The last step is given in \S 8. In \S 9 we complete the proof of Theorem \ref{t12} and we give
the proof of Corollary \ref{c13}. Finally, in \S 10 and \S 11 we discuss quantitative refinements of Theorem \ref{t12}.


\section{Background material}

\numberwithin{equation}{section}

By $\nn=\{0,1,2,...\}$ we shall denote the natural numbers. For every integer $n\meg 1$ we set $[n]=\{1,...,n\}$. If $X$ is a nonempty finite set,
then by $\ave_{x\in X}$ we shall denote the average $\frac{1}{|X|} \sum_{x\in X}$ where $|X|$ stands for the cardinality of $X$. For every function
$f:\nn\to\nn$ and every $\ell\in\nn$ by $f^{(\ell)}:\nn\to\nn$ we shall denote the $\ell$-th iteration of $f$ defined recursively by $f^{(0)}(n)=n$
and $f^{(\ell+1)}(n)=f\big(f^{(\ell)}(n)\big)$ for every $n\in\nn$.

\subsection{Words over a finite alphabet} 

Let $A$ be a \textit{finite alphabet}, i.e., a nonempty finite set. For every $n\in\nn$ let $A^n$ be the set of all sequences of length $n$
having values in $A$. Precisely, $A^0$ contains just the empty sequence while if $n\meg 1$, then
\begin{equation} \label{e21}
A^n=\big\{ (a_0,...,a_{n-1}): a_i\in A \text{ for every } i\in\{0,...,n-1\}\big\}.
\end{equation}
Also let
\begin{equation} \label{e22}
A^{<n+1}=\bigcup_{i=0}^n A^i \ \text{ and } \ A^{<\nn}=\bigcup_{n\in\nn} A^n.
\end{equation}
The elements of $A^{<\nn}$ are called \textit{words over A}, or simply \textit{words} if $A$ is understood. The \textit{length} of a word $w$ over
$A$, denoted by $|w|$, is the unique natural number $n$ such that $w\in A^n$. For every $i\in\nn$ with $i\mik |w|$ by $w|i$ we shall denote the 
word of length $i$ which is an initial segment of $w$. More generally, if $W$ is a nonempty subset of $A^{<\nn}$ such that for every $w\in W$ 
we have $|w|\meg i$, then we set
\begin{equation} \label{e23}
W|i=\{w|i:w\in W\}.
\end{equation}
The concatenation of two words $w_1$ and $w_2$ over $A$ will be denoted by $w_1^{\con}w_2$. Moreover, for every pair $W_1$ and $W_2$ of nonempty
subsets of $A^{<\nn}$ we set
\begin{equation} \label{e24}
W_1^{\con} W_2=\{w_1^\con w_2: w_1\in W_1 \text{ and } w_2\in W_2\}.
\end{equation}
If $w_1$ and $w_2$ are two words over $A$, then their \textit{infimum}, denoted by $w_1\wedge w_2$, is the greatest common initial segment of
$w_1$ and $w_2$. The infimum operation can be naturally extended to nonempty subsets of words. Specifically, let $W$ be a nonempty subset of 
$A^{<\nn}$. The \textit{infimum} of $W$, denoted by $\wedge W$, is the word over $A$ of greatest length which is an initial segment of every
$w\in W$. Notice that $w_1\wedge w_2=\wedge \{w_1,w_2\}$ for every $w_1,w_2\in A^{<\nn}$.

Finally let $k\in\nn$ with $k\meg 2$. A special case -- which is, nevertheless, of particular importance -- of an alphabet of cardinality $k$
is the set $[k]$. The elements of $[k]^{<\nn}$ will be called \textit{words over $k$}. Of course, if $A$ is another alphabet with $|A|=k$,
then the sets $A^{<\nn}$ and $[k]^{<\nn}$ are naturally ``isomorphic". We need to consider words over arbitrary alphabets for reasons that will
become clear in the sequel.

\subsection{Variable words} 

Let $A$ be a finite alphabet and $n$ be a positive integer. Fix a set $\{x_0,...,x_{n-1}\}$ of distinct letters which is disjoint from $A$.
We view the set $\{x_0,...,x_{n-1}\}$ as a set of variables. An \textit{$n$-variable word over $A$} is a finite sequence having values in 
$A\cup\{x_0,...,x_{n-1}\}$ such that: (a) for every $i\in \{0,...,n-1\}$ the letter $x_i$ appears at least once, and (b) if $n\meg 2$, then
for every $i,j\in\{0,...,n-1\}$ with $i<j$ all occurrences of $x_i$ precede all occurrences of $x_j$. If $A$ is understood, then $n$-variable
words over $A$ will be referred to simply as \textit{$n$-variable words} while $1$-variable words over $A$ will be referred to as 
\textit{variable words}. A \textit{left variable word} is a variable word whose leftmost letter is the variable $x$.
\begin{rem} \label{r21}
The concept of an $n$-variable word over $A$ is closely related to the notion of an $n$-parameter word over $A$ introduced by R. L. Graham
and B. L. Rothschild in \cite{GR}. Indeed, recall that an $n$-parameter word over $A$ is also a finite sequence having values in
$A\cup\{x_0,...,x_{n-1}\}$ satisfying condition (a) above and such that: (b$'$) if $n\meg 2$, then for every $i,j\in\{0,...,n-1\}$
with $i<j$ the first occurrence of $x_i$ precedes the first occurrence of $x_j$. In particular, every $n$-variable word is an $n$-parameter word.
Of course, when ``$n=1$" the two notions coincide.
\end{rem}

As above, let $A$ be a finite alphabet. For every $n$-variable word $w$ over $A$ and every $\beta_0,...,\beta_{n-1}\in A\cup\{x_0,...,x_{n-1}\}$
by $w(\beta_0,...,\beta_{n-1})$ we shall denote the unique word over $[k]\cup\{x_0,...,x_{n-1}\}$ obtained by substituting in $w$ all appearances
of the letter $x_i$ with $\beta_i$ for every $i\in \{0,...,n-1\}$.  Notice that $(\beta_0,...,\beta_{n-1})\in A^n$ if and only if 
$w(\beta_0,...,\beta_{n-1})$ is a word over $A$. More generally, if $m\in [n]$, then $(\beta_0,...,\beta_{n-1})$ is an $m$-variable word if and
only if $w(\beta_0,...,\beta_{n-1})$ is an $m$-variable word. An $m$-variable word of the form $w(\beta_0,...,\beta_{n-1})$ will be called an
$m$-variable \textit{subword} of $w$. 

We recall some basic combinatorial results concerning words. The first one is due to A. H. Hales and R. I. Jewett \cite{HJ}.
\begin{thm} \label{t21}
For every $k,r\in\nn$ with $k\meg 2$ and $r\meg 1$ there exists a positive integer $N$ with the following property. If $n\meg N$, then for every
alphabet $A$ with $|A|=k$ and every $r$-coloring of $A^n$ there exists a variable word $w$ of length $n$ such that the set $\{w(a):a \in A\}$ is
monochromatic. The least integer $N$ with this property will be denoted by $\hj(k,r)$.
\end{thm}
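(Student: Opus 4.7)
My plan is to prove the Hales--Jewett theorem by induction on the alphabet size $k$, following the classical focusing argument of Shelah, which has the additional merit of yielding primitive recursive bounds if one wants to track them. The base case $k=1$ is trivial. For $k=2$, a direct pigeonhole suffices: if $n \meg r$, then among the $n+1$ words $2^i 1^{n-i}$ for $0 \mik i \mik n$, two must share a color, say for indices $i<j$, and then the variable word $w = 2^i x^{j-i} 1^{n-j}$ has $w(1) = 2^i 1^{n-i}$ and $w(2) = 2^j 1^{n-j}$ sharing that color.

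For the inductive step from $k-1$ to $k$, the strategy is to split $[k]^N$ into $M$ consecutive blocks of length $L$, where $M \meg \hj(k-1, r)$ and $L$ is chosen to grow fast enough to absorb a cascading pigeonhole. Processing the blocks from right to left, and holding all later blocks fixed at the constant word $k^L$, I would apply the $k=2$ pigeonhole inside each block---not to the original coloring, but to an auxiliary coloring whose range has been enlarged to record the color of the block simultaneously for all possible choices of the earlier (still undetermined) blocks' contents. This produces, for each block $i$, a variable word $v_i$ of length $L$ over $[k]$ together with a letter $a_i \in \{1,\dots,k-1\}$ satisfying the focus property that substituting $x$ in $v_i$ with $k$ or with $a_i$ yields the same overall color in the constructed context. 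A further pigeonhole over the $a_i$ then selects a common letter $a$ on a subset of at least $\hj(k-1, r)$ blocks. Restricting the coloring to substitutions from $[k-1]$ in those blocks defines a coloring of $[k-1]^{\hj(k-1,r)}$ to which the inductive hypothesis applies; the monochromatic variable word over $[k-1]$ it produces, combined with the focus letter $a$ and the focusing property, upgrades to a monochromatic variable word over all of $[k]$.

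The main obstacle will be the right-to-left bookkeeping in the inductive step: the auxiliary coloring used at block $i$ has range size that blows up with $M - i$, so $L$ must be inflated in a tower fashion, and one has to verify that the focusing property established at each block is not destroyed when later blocks are processed (they are not, because the focusing at block $i$ was built to respect every possible filling of earlier blocks, while later blocks are kept constant at $k^L$ throughout). Checking that the word eventually extracted is genuinely a variable word in the sense of the paper---that the variable $x$ actually appears---is routine, since each $v_i$ is a variable word and the line from the inductive hypothesis reuses the variable in at least one block.
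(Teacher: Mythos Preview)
The paper does not prove this theorem; it is quoted as the classical Hales--Jewett theorem with a reference to \cite{HJ}, and Shelah's primitive recursive bounds are cited from \cite{Sh}. So there is no proof in the paper against which to compare your proposal.

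That said, your sketch of Shelah's argument has a real gap in the focusing bookkeeping. You process blocks from right to left and, when focusing block $i$, you hold the already-processed later blocks fixed at the constant word $k^L$ while letting the earlier blocks range freely. The focus property you then obtain says only that substituting $k$ or $a_i$ into $v_i$ gives the same colour \emph{provided blocks $j>i$ equal $k^L$}. But in the final step, after the inductive hypothesis yields a monochromatic line over $[k-1]$, the blocks carrying a constant letter $b_j\in[k-1]$ are filled with $v_j(b_j)$, not with $k^L$; so when you flip the variable blocks from $a$ to $k$ one by one, the surrounding context no longer matches the one in which the focus was established, and the colour need not be preserved. Your parenthetical reassurance that ``later blocks are kept constant at $k^L$ throughout'' is exactly the assertion that fails. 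The standard remedy is to enlarge the auxiliary coloring at block $i$ so that it also records the colour for every choice $a_j\in[k]$ in the already-processed blocks $j>i$ (placing $v_j(a_j)$ there, which is legitimate since $v_j$ has been chosen); this costs only an extra factor of $k^{M-i}$ in the range of the auxiliary coloring and makes the focus robust enough for the final flip. Two minor points: the blocks cannot all have the same length $L$, as you yourself note later when you say $L$ must be ``inflated in a tower fashion''; and the separate pigeonhole over the letters $a_i$ is unnecessary if you always walk the path $k^L, k^{L-1}(k{-}1),\dots,(k{-}1)^L$, which forces $a_i=k-1$ for every $i$.
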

The Hales--Jewett Theorem is one of the cornerstones of modern Ramsey Theory. The best known upper bounds for the numbers $\hj(k,r)$ are due
to S. Shelah. Specifically, by \cite[Theorem 1.5]{Sh}, there is a primitive recursive function $\phi:\nn^2\to\nn$ belonging to the class
$\mathcal{E}^{5}$ of Grzegorczyk's hierarchy such that for every integer $k\meg 2$ and every integer $r\meg 1$ we have that $\hj(k,r)\mik\phi(k,r)$.

We will also need the following theorem.
\begin{thm}\label{t22}
Let $k,d,m,r$ be positive integers with $k\meg 2$ and $d\meg m$. Then there exists a positive integer $N$ with following property.
If $n\meg N$, then for every alphabet $A$ with $|A|=k$ and every $r$-coloring of the set of all $m$-variable words over $A$ of length $n$,
there exists a $d$-variable word $w$ over $A$ of length $n$ such that all $m$-variable subwords of $w$ are monochromatic. The least integer
$N$ with this property will be denoted by $\mathrm{GR}(k,d,m,r)$.

Moreover, there exists a primitive recursive function $\psi:\nn^4\to\nn$ belonging to the class $\mathcal{E}^6$ of Grzegorczyk's hierarchy such that
\begin{equation} \label{e25}
\gr(k,d,m,r)\mik \psi(k,d,m,r)
\end{equation}
for every integer $k\meg 2$, every pair of integers $d\meg m\meg 1$ and every integer $r\meg 1$.
\end{thm}
Theorem \ref{t22} is a variant of the Graham--Rothschild Theorem \cite{GR} which refers to $m$-parameter words instead of $m$-variable words.
We notice that there are several detailed expositions as well as infinite extensions of Theorem \ref{t22} found in the literature -- see, e.g.,
\cite{BBH,FK1,McCbook}. The upper bounds for the numbers $\mathrm{GR}(k,d,m,r)$ mentioned in \eqref{e25} follow from standard arguments and 
the aforementioned work of S. Shelah on the ``Hales--Jewett numbers".

\subsection{Carlson--Simpson sequences}

Let $k\in\nn$ with $k\meg 2$. As we have already mentioned in Definition \ref{d11}, a Carlson--Simpson sequence over $k$ is a nonempty
finite sequence of the form
\begin{equation} \label{e26}
\mathbf{w}=(w, w_0,..., w_{n-1})
\end{equation}
where $n$ is a positive integer, $w$ is a word over $k$ and $w_0,...,w_{n-1}$ are left variable words over $k$.
Of course, Carlson--Simpson sequences over $k$ are in one-to-one correspondence with Carlson--Simpson trees of $[k]^{<\nn}$. In several cases,
however, it is very convenient to work with Carlson--Simpson sequences, and as such, we find it appropriate to explicitly isolate this concept.

Let $\mathbf{w}=(w, w_0,..., w_{n-1})$ be a Carlson--Simpson sequence over $k$. The integer $n$ will be called the \textit{dimension} of
$\mathbf{w}$ and will be denoted by $\dim(\mathbf{w})$. Also let $m\in [n]$. A Carlson--Simpson sequence $\mathbf{v}=(v, v_0,..., v_{m-1})$
over $k$ will be called an \textit{$m$-dimensional Carlson--Simpson subsequence} of $\mathbf{w}$ if there exist a sequence $(a_i)_{i=0}^{n-1}$
in $[k]\cup\{x\}$ and a strictly increasing sequence $(n_i)_{i=0}^{m}$ in $\{0,...,n\}$ such that the following conditions are satisfied.
\begin{enumerate}
\item[(C1)] For every $i\in\{0,...,m-1\}$ we have $a_{n_i}=x$.
\item[(C2)] If $n_0=0$, then $v=w$. Otherwise, we have that $a_0,...,a_{n_0-1}\in [k]$ and
\begin{equation} \label{e27}
v=w^\con w_0(a_0)^\con ... ^\con w_{n_0-1}(a_{n_0-1}).
\end{equation} 
\item[(C3)] For every $i\in\{0,...,m-1\}$ we have
\begin{equation} \label{e28}
v_i(x)=w_{n_i}(a_{n_i})^\con w_{n_i+1}(a_{n_i+1})^\con ...^\con w_{n_{i+1}-1}(a_{n_{i+1}-1}).
\end{equation}
\end{enumerate}
The set of all $m$-dimensional Carlson--Simpson subsequences of $\mathbf{w}$ will be denoted by $\mathrm{Subseq}_m(\mathbf{w})$.

We will need the following theorem. It is a reformulation of \cite[Theorem 4.1]{DKT3}.
\begin{thm} \label{t23}
Let  $k, d, m, r$  be positive integers with $k\meg 2$ and $d\meg m$. Then there exists a positive integer $N$ with the following
property. If $n\meg N$, then for every $n$-dimensional Carlson--Simpson sequence $\mathbf{w}$ over $k$ and every $r$-coloring of 
$\mathrm{Subseq}_m(\mathbf{w})$ there exists $\mathbf{v}\in\mathrm{Subseq}_d(\mathbf{w})$  such that the set $\mathrm{Subseq}_m(\mathbf{v})$
is monochromatic. The least integer $N$ with this property will be denoted by $\mathrm{CS}(k,d,m,r)$.
\end{thm}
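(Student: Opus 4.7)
Since Theorem 2.3 is explicitly presented as a reformulation of \cite{DKT3}, my plan is to justify it by verifying the equivalence of the two formulations. The bijection noted immediately after Definition 1.1 identifies each Carlson--Simpson sequence $\mathbf{w}=(w,w_0,\ldots,w_{n-1})$ with the Carlson--Simpson tree it generates via formula (1.2), and this identification restricts to a bijection between $\mathrm{Subseq}_m(\mathbf{w})$ and the collection of $m$-dimensional Carlson--Simpson subtrees of the tree generated by $\mathbf{w}$. Under this translation an $r$-coloring of $\mathrm{Subseq}_m(\mathbf{w})$ matches an $r$-coloring of the corresponding set of $m$-dimensional subtrees used in \cite{DKT3}, and the conclusion about a $d$-dimensional monochromatic substructure is the same on both sides; therefore $\cs(k,d,m,r)$ equals the bound supplied by Theorem 4.1 of \cite{DKT3}, and nothing further is needed.

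If instead one wanted a self-contained argument, the natural strategy would be induction on the dimension $m$ of the colored objects. For the base case $m=1$, a $1$-dimensional Carlson--Simpson subsequence of $\mathbf{w}$ is a pair $(v,v_0)$ determined by a pair of indices $n_0<n_1$ in $\{0,\ldots,n\}$ together with values $a_j\in[k]$ for $j<n_1$ with $j\neq n_0$. Encoding these index and value choices as a word over a suitable auxiliary alphabet fits them into the Hales--Jewett framework (Theorem 2.1), and iterating $d$ times produces a $d$-dimensional monochromatic subsequence. For the inductive step, one "freezes" the first variable block, applies the inductive hypothesis to the tail of $\mathbf{w}$, and combines this with a product-Ramsey step in the spirit of Theorem 2.2. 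The bounds then propagate through the iteration and remain inside the Grzegorczyk class $\mathcal{E}^{6}$, matching the bounds known for the Graham--Rothschild numbers.

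The main obstacle in the self-contained route is the variable-length character of Carlson--Simpson subsequences: the positions $n_0<\ldots<n_m$ can terminate anywhere in $\{0,\ldots,n\}$, so $\mathrm{Subseq}_m(\mathbf{w})$ does not sit inside any single set of fixed-length $m$-variable words over $[k]$ and Theorem 2.2 cannot be applied as a black box. One workaround is to adjoin a formal "stop" symbol to the alphabet, pad each subsequence to length $n$, and then invoke the Graham--Rothschild theorem over the enlarged alphabet; another is to perform the induction internally to the Carlson--Simpson structure, as is done in \cite{DKT3}. Either path yields primitive recursive bounds, but by far the most economical way to complete the proof is to invoke \cite{DKT3} directly via the translation of the first paragraph.
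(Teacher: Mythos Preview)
Your first paragraph is correct and is exactly how the paper handles Theorem~\ref{t23} in the main text: it is stated as a reformulation of \cite[Theorem~4.1]{DKT3}, and the translation you describe via the bijection $\mathrm{Subseq}_\ell(\mathbf{w})\leftrightarrow\mathrm{Subtr}_\ell(W)$ is precisely the one the paper records in \S2.4.

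Where the paper goes further than your sketch is in Appendix~B, which supplies a self-contained proof with the explicit bound $\cs(k,d,m,r)\le\gr(k,d+1,m+1,r)-1$. The device used there is sharper than either the induction on $m$ you outline or the ``stop symbol'' padding you suggest. The paper encodes each $\mathbf{v}\in\mathrm{Subseq}_m(\mathbf{w})$ by an $(m{+}1)$-variable word $u$ of length exactly $n{+}1$: the wildcard sets of $x_0,\ldots,x_{m-1}$ record the blocks of $\mathbf{v}$, and the \emph{extra} variable $x_m$ absorbs everything from the end of the last block onward (formally, one sets $\mathbf{v}=\mathbf{w}\otimes u^*$, where $u^*$ is $u$ truncated just before the first occurrence of $x_m$). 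This turns the variable-length problem you flag into a fixed-length one without enlarging the alphabet, so Theorem~\ref{t22} applies as a black box in a single shot rather than through an iterated induction. The payoff is the clean inequality above and, via Theorem~\ref{ta1}, the $\mathcal{E}^6$ bound. Your ``stop symbol'' idea is morally the same manoeuvre, but using a fresh variable rather than a fresh alphabet letter is what makes the reduction land directly on $\gr$.
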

The main observation behind the proof of Theorem \ref{t23} is that one can get upper bounds for the numbers $\mathrm{CS}(k,d,m,r)$
which are expressed in terms of the numbers $\mathrm{GR}(k,d,m,r)$. Specifically we have  
\begin{equation} \label{e29}
\mathrm{CS}(k,d,m,r)\mik \mathrm{GR}(k,d+1,m+1,r)
\end{equation}
for every $k,d,m,r\in\nn$ with $k\meg 2$, $d\meg m\meg 1$ and $r\meg 1$. Thus, combining \eqref{e29} and Theorem \ref{t22}, we obtain the 
following corollary.
\begin{cor} \label{c24}
The numbers $\mathrm{CS}(k,d,m,r)$ are bounded by a primitive recursive function belonging to the class $\mathcal{E}^6$
of Grzegorczyk's hierarchy.
\end{cor}

\subsection{Carlson--Simpson trees}

Recall that a Carlson--Simpson tree of $[k]^{<\nn}$ is a set of the form
\begin{equation} \label{e210}
\{w\} \cup \big\{ w^{\con}w_0(a_0)^{\con}...^{\con}w_n(a_n): n\in\{0,...,m-1\} \text{ and } a_0,...,a_n\in [k]\big\}
\end{equation}
where $(w,w_0,...,w_{m-1})$ is a Carlson--Simpson sequence over $k$. Observe that the Carlson--Simpson sequence $(w,w_0,...,w_{m-1})$
that generates a Carlson--Simpson tree $W$ via formula \eqref{e210} is unique. It will be called the \textit{generating sequence} of $W$.
Also recall that the corresponding natural number $m$ is called the \emph{dimension} of $W$ and is denoted by $\dim(W)$. The
$1$-dimensional Carlson--Simpson trees will be called \textit{Carlson--Simpson lines}.

Let $W$ be an $m$-dimensional Carlson--Simpson tree of $[k]^{<\nn}$ and $(w,w_0,...,w_{m-1})$ be its generating sequence.
For every $n\in [m]$ the \textit{$n$-level} $W(n)$ of $W$ is defined by
\begin{equation} \label{e211}
W(n)=\big\{ w^{\con}w_0(a_0)^{\con}...^{\con}w_{n-1}(a_{n-1}): a_0,...,a_{n-1}\in [k]\big\}.
\end{equation}
The \textit{$0$-level} $W(0)$ of $W$ is defined to be the singleton $\{w\}$.

For every $m$-dimensional Carlson--Simpson tree $W$ of $[k]^{<\nn}$ and every $\ell\in [m]$ by $\mathrm{Subtr}_{\ell}(W)$ we shall
denote the set of all $\ell$-dimensional Carlson--Simpson trees of $[k]^{<\nn}$ which are contained in $W$. An element of
$\mathrm{Subtr}_{\ell}(W)$ will be called an \textit{$\ell$-dimensional Carlson--Simpson subtree} of $W$. The set $\mathrm{Subtr}_{\ell}(W)$
is in one-to-one correspondence with the set $\mathrm{Subseq}_{\ell}(\mathbf{w})$ where $\mathbf{w}$ stands for the generating sequence
of $W$. Indeed, notice that for every Carlson--Simpson tree $V$ of $[k]^{<\nn}$ generated by the sequence $\mathbf{v}$ we have that
$V\in \mathrm{Subtr}_{\ell}(W)$ if and only if $\mathbf{v}\in \mathrm{Subseq}_{\ell}(\mathbf{w})$.

A natural example of a Carlson--Simpson tree of $[k]^{<\nn}$ of dimension $m$ is the set $[k]^{<m+1}$. Actually, every Carlson--Simpson
tree of dimension $m$ can be thought of as a ``copy'' of $[k]^{<m+1}$ inside $[k]^{<\nn}$. Specifically, let $W$ be an $m$-dimensional
Carlson--Simpson tree of $[k]^{<\nn}$ and $(w,w_0,...,w_{m-1})$ be its generating sequence. The \textit{canonical isomorphism} associated
to $W$ is the bijection $\mathrm{I}_W:[k]^{<m+1}\to W$ defined by $\mathrm{I}_W(\varnothing)=w$ and
\begin{equation} \label{e212}
\mathrm{I}_W\big((a_0,...,a_{n-1})\big)= w^{\con}w_0(a_0)^{\con}...^{\con}w_{n-1}(a_{n-1})
\end{equation}
for every $n\in [m]$ and every $(a_0,...,a_{n-1})\in [k]^n$. The canonical isomorphism $\mathrm{I}_W$ preserves all structural
properties one is interested in while working in the category of Carlson--Simpson trees. Precisely, we have the following.
\begin{fact} \label{f25}
Let $W$ be an $m$-dimensional Carlson--Simpson tree of $[k]^{<\nn}$. Then the following are satisfied.
\begin{enumerate}
\item[(a)] If $\ell\in [m]$ and $V$ is a Carlson--Simpson subtree of $[k]^{<m+1}$ of dimension $\ell$, then its image $\mathrm{I}_W(V)$
under the canonical isomorphism is an $\ell$-dimensional Carlson--Simpson subtree of $W$.
\item[(b)] For every nonempty subset $F$ of $[k]^{<m+1}$ we have $\mathrm{I}_W(\wedge F)=\wedge \mathrm{I}_W(F)$. In particular, 
Carlson--Simpson trees preserve infima.
\end{enumerate}
\end{fact}
By Fact \ref{f25}, for most practical purposes we may identify an $m$-dimensional Carlson--Simpson tree $W$ with $[k]^{<m+1}$
via the canonical isomorphism $\mathrm{I}_W$.

We close this subsection with the following consequence of Theorem \ref{t23}.
\begin{lem} \label{l26}
Let $k,d,r$ be positive integers with $k\meg 2$. If $W$ is a Carlson--Simpson tree $W$ of $[k]^{<\nn}$ with
\begin{equation} \label{e213}
\dim(W)\meg \mathrm{CS}(k,d+1,1,r),
\end{equation}
then for every $r$-coloring of $W$ there exists $V\in\mathrm{Subtr}_d(W)$ which is monochromatic.
\end{lem}
\begin{proof}
Let $\mathbf{w}$ be the generating sequence of $W$ and fix an $r$-coloring $c: W\to [r]$. Define 
$\tilde{c}:\mathrm{Subseq}_1(\mathbf{w})\to [r]$ by $\widetilde{c}\big( (v,v_0)\big)= c(v)$ for every
$(v,v_0)\in \mathrm{Subseq}_1(\mathbf{w})$. Invoking Theorem \ref{t23} and \eqref{e213}, we may select 
$\mathbf{v}=(v,v_0,...,v_d)\in\mathrm{Subseq}_{d+1}(\mathbf{w})$ such that the set $\mathrm{Subseq}_1(\mathbf{v})$ is monochromatic
with respect to the coloring $\tilde{c}$. Let $V$ be the $d$-dimensional Carlson--Simpson tree generated by the sequence
$(v,v_0,...,v_{d-1})$. It is easy to see that $V$ is as desired.
\end{proof}

\subsection{Probabilistic preliminaries}

We recall the definition of a class of probability measures on $[k]^{<\nn}$ introduced by H. Furstenberg and B. Weiss in \cite{FW}.
Specifically, let $k,m\in\nn$ with $k\meg 2$ and $m\meg 1$. The \textit{Furstenberg--Weiss measure} $\mathrm{d}_{\mathrm{FW}}^m$ associated
to $[k]^{<m+1}$ is the probability measure on $[k]^{<\nn}$ defined by
\begin{equation} \label{e214}
\mathrm{d}_{\mathrm{FW}}^m(A)= \ave_{n\in\{0,...,m\}}\frac{A\cap[k]^n}{k^n}.
\end{equation}
We will also need the following standard lemma.
\begin{lem} \label{l27}
Let $0<\vartheta<\ee\mik 1$ and $n\in\nn$ with $n\meg (\ee^2-\vartheta^2)^{-1}$. If $(A_i)_{i=1}^{n}$ is a family of measurable events
in a probability space $(\Omega,\Sigma,\mu)$ satisfying $\mu(A_i)\meg \ee$ for every $i\in [n]$, then there exist $i,j\in [n]$ with $i\neq j$
such that $\mu(A_i\cap A_j) \meg \vartheta^2$.
\end{lem}
\begin{proof}
We set $X=\sum_{i=1}^n \mathbf{1}_{A_i}$ where $\mathbf{1}_{A_i}$ is the indicator function of the event $A_i$ for every $i\in [n]$.
Then $\mathbb{E}[X]\meg \ee n$ so, by convexity,
\begin{equation} \label{e215}
\sum_{i\in [n]} \sum_{j\in [n]\setminus \{i\}} \mu(A_i\cap A_j) = \mathbb{E}[X(X-1)] \meg \ee n (\ee n-1).
\end{equation}
Therefore, there exist $i,j\in [n]$ with $i\neq j$ such that $\mu(A_i\cap A_j)\meg \theta^2$.
\end{proof}

\subsection{The density Carlson--Simpson Theorem}

We will need the following result (see \cite[Theorem B]{DKT3}).
\begin{thm} \label{t28}
For every integer $k\meg 2$, every integer $m\meg 1$ and every $0<\delta\mik 1$ there exists an integer $N$ with the following property. If $L$
is a finite subset of $\nn$ of cardinality at least $N$ and $A$ is a set of words over $k$ satisfying $|A\cap [k]^n|\meg \delta k^n$ for every
$n\in L$, then there exists a Carlson--Simpson sequence $(w,w_0,...w_{m-1})$ over $k$ such that the set
\begin{equation} \label{e216}
\{w\}\cup \big\{w^{\con}w_0(a_0)^{\con}...^{\con}w_n(a_n): n\in\{0,...,m-1\} \text{ and } a_0,...,a_n\in [k]\big\}
\end{equation}
is contained in $A$. The least integer $N$ with this property will be denoted by $\dcs(k,m,\delta)$.
\end{thm}
Theorem \ref{t28} is the density version of a well-known coloring result due to T. J. Carlson and S. J. Simpson \cite{CS}.
Also we notice that the argument in \cite{DKT3} is effective and gives explicit upper bounds for the numbers $\dcs(k,m,\delta)$. These
upper bounds, however, have an Ackermann-type dependence with respect to $k$.

The final result of this subsection is a consequence of Theorem \ref{t28}. To state it we need, first, to introduce some numerical invariants.
Specifically, for every integer $k\meg 2$ and every $0< \delta\mik 1$ we set
\begin{equation} \label{e217}
\Lambda=\Lambda(k,\delta)=\lceil4\delta^{-1}\mathrm{DCS}(k,1,\delta/4)\rceil
\end{equation}
and
\begin{equation} \label{e218}
\eta(k,\delta)=\frac{\delta}{2|\mathrm{Subtr}_1([k]^{<\Lambda})|}.
\end{equation}
We have the following lemma.
\begin{lem} \label{l29}
Let $k\in \nn$ with $k\meg 2$ and $0<\delta\mik 1$, and define $\Lambda=\Lambda(k,\delta)$ as in \eqref{e217}. Also let 
$\{A_t: t\in [k]^{<\Lambda}\}$ be a family of measurable events in a probability space $(\Omega,\Sigma, \mu)$ satisfying $\mu(A_t)\meg \delta$
for every $t\in [k]^{<\Lambda}$. Then there exists a Carlson--Simpson line $S$ of $[k]^{<\Lambda}$ such that
\begin{equation} \label{e219}
\mu\Big(\bigcap_{t\in S}A_t\Big)\meg \eta(k,\delta)
\end{equation}
where $\eta(k,\delta)$ is as in \eqref{e218}.
\end{lem}
We remark that Lemma \ref{l29} follows from \cite[Lemma 7.9]{DKT3}. It is based on an argument that can be traced in an old paper of P. Erd\H{o}s
and A. Hajnal \cite{EH}. For the convenience of the reader we include the proof.
\begin{proof}[Proof of Lemma \ref{l29}]
Let $\mathrm{d}_{\mathrm{FW}}^{\Lambda-1}$ be the Furstenberg--Weiss measure associated to $[k]^{<\Lambda}$ and set
\begin{equation} \label{e220}
Y=\Big\{ \omega\in\Omega: \mathrm{d}_{\mathrm{FW}}^{\Lambda-1}\big(\{t\in [k]^{<\Lambda}: \omega\in A_t\}\big)\meg \delta/2\Big\}.
\end{equation}
Since $\mu(A_t)\meg \delta$ for every $t\in [k]^{<\Lambda}$ we have $\mu(Y)\meg \delta/2$.

Let $\omega\in Y$ be arbitrary and set $A_\omega=\{t\in [k]^{<\Lambda}: \omega\in A_t\}$. Observe that 
\begin{equation} \label{e221}
|\big\{n\in\{0,...,\Lambda-1\}: \frac{|A_{\omega}\cap [k]^n|}{k^n}\meg \delta/4\big\}| \meg (\delta/4)\Lambda
\stackrel{\eqref{e217}}{\meg} \mathrm{DCS}(k,1,\delta/4).
\end{equation}
By Theorem \ref{t28}, there exists a Carlson--Simpson line $S_\omega$ of $[k]^{<\nn}$ with $S_{\omega}\subseteq A_\omega$.
In particular, we have $S_\omega\subseteq [k]^{<\Lambda}$ and 
\begin{equation} \label{e222}
\omega\in \bigcap_{t\in S_\omega}A_t.
\end{equation}
By the classical pigeonhole principle, there exist $Z\in\Sigma$ and a Carlson--Simpson line $S$ of $[k]^{<\Lambda}$ such that
$S_\omega=S$ for every $\omega\in Z$ and 
\begin{equation} \label{e223}
\mu(Z)\meg\frac{\mu(Y)}{|\mathrm{Subtr}_1([k]^{<\Lambda})|}
\meg\frac{\delta/2}{|\mathrm{Subtr}_1([k]^{<\Lambda})|}\stackrel{\eqref{e218}}{=}\eta(k,\delta).
\end{equation}
Hence,
\begin{equation} \label{e224}
\mu\Big( \bigcap_{t\in S}A_t \Big) \meg \mu(Z)\stackrel{\eqref{e223}}{\meg} \eta(k,\delta)
\end{equation}
and the proof is completed.
\end{proof}


\section{Proof of Theorem \ref{t14}}

\numberwithin{equation}{section}

This section is devoted to the proof of Theorem \ref{t14} stated in the introduction. It is organized as follows. In \S 3.1 we introduce some
pieces of notation and isolate some basic properties of types, while in \S 3.2 we gather some preliminary tools. The proof of Theorem \ref{t14}
is completed in \S 3.3.

\subsection{Types: definitions and basic properties} 

Let $k,q\in\nn$ with $k\meg 2$ and $q\meg 1$. Also let $\mathcal{L}$ be an alphabet of cardinality $q$ which is disjoint from $[k]$ and denote
by $\mathrm{W}(k,\mathcal{L})$ the set of all words over $[k]\cup \mathcal{L}$. Recall that a \textit{type} of $\mathrm{W}(k,\mathcal{L})$ is a
nonempty word over $\mathcal{L}$ having no consecutive multiple appearances of the same letter. The set of all types of $\mathrm{W}(k,\mathcal{L})$
will be denoted by $\mathcal{T}(\mathcal{L})$. Also recall that for every $w\in \mathrm{W}(k,\mathcal{L})\setminus [k]^{<\nn}$ its type
is defined as follows. First we erase all letters of $w$ which belong to $[k]$, then we shorten the runs of the same letters of $\mathcal{L}$
to singletons and, finally, we push everything back together. For every $\tau\in\mathcal{T}(\mathcal{L})$ by $\mathrm{W}(k,\mathcal{L},\tau)$
we shall denote the set of all $w\in \mathrm{W}(k,\mathcal{L})$ of type $\tau$. Hence,
\begin{equation} \label{e31}
\mathrm{W}(k,\mathcal{L})\setminus[k]^{<\nn}= \bigcup_{\tau\in\mathcal{T}(\mathcal{L})}\mathrm{W}(k,\mathcal{L},\tau).
\end{equation}
Let $\mathbf{w}=(w, w_0,...,w_{n-1})$ be an $n$-dimensional Carlson--Simpson sequence over $k$. For every $v=(a_0,...,a_{m-1})
\in \mathrm{W}(k,\mathcal{L})$ of length at most $n$ we set
\begin{equation} \label{e32}
\mathbf{w}(v)=w^\con w_0(a_0)^\con w_1(a_1)^\con...^\con w_{m-1}(a_{m-1})
\end{equation}
with the convention that $\mathbf{w}(v)=w$ if $v$ is the empty word. We define
\begin{equation} \label{e33}
\mathbf{w}(k,\mathcal{L})=\big\{\mathbf{w}(v):  v \in \mathrm{W}(k,\mathcal{L}) \text{ of length at most } \dim(\mathbf{w})\big\}.
\end{equation}
Moreover, for every $\tau\in \mathcal{T}(\mathcal{L})$ we set
\begin{equation} \label{e34}
\mathbf{w}(k,\mathcal{L},\tau)=\mathbf{w}(k,\mathcal{L})\cap \ \mathrm{W}(k,\mathcal{L},\tau).
\end{equation}
We will need the following elementary fact.
\begin{fact} \label{f31}
Let $\mathbf{w}$ be an $n$-dimensional Carlson--Simpson sequence over $k$.
\begin{enumerate}
\item[(a)] For every $\tau\in\mathcal{T}(\mathcal{L})$ the set  $\mathbf{w}(k,\mathcal{L},\tau)$ is nonempty if and only if $|\tau|\mik n$.
Moreover, if $W$ is the Carlson--Simpson tree generated by $\mathbf{w}$, then
\begin{equation} \label{e35}
\mathbf{w}(k,\mathcal{L})\setminus W=\bigcup_{\{\tau\in\mathcal{T}(\mathcal{L}):|\tau|\mik n\}}\mathbf{w}(k,\mathcal{L},\tau).
\end{equation}
\item[(b)] If $\mathbf{v}$ is a Carlson--Simpson subsequence of $\mathbf{w}$, then
\begin{equation} \label{e36}
\mathbf{v}(k,\mathcal{L})\subseteq \mathbf{w}(k,\mathcal{L}).
\end{equation} 
Moreover, for every $\tau\in\mathcal{T}(\mathcal{L})$ we have
\begin{equation} \label{e37}
\mathbf{v}(k,\mathcal{L},\tau)\subseteq \mathbf{w}(k,\mathcal{L},\tau).
\end{equation}
\end{enumerate}
\end{fact}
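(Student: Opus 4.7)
The plan is to verify both parts of the fact by directly unfolding the definitions of type and of Carlson--Simpson subsequence; no deep input is required.

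For part (a), I would first establish that $\mathbf{w}(k,\mathcal{L},\tau)\neq\varnothing$ if and only if $|\tau|\mik n$. Given a type $\tau=(\lambda_{i_0},\dots,\lambda_{i_{s-1}})$ with $s\mik n$, I view $\tau$ itself as an element of $\mathrm{W}(k,\mathcal{L})$ of length $s$ and compute
\[
\mathbf{w}(\tau)=w^\con w_0(\lambda_{i_0})^\con\cdots^\con w_{s-1}(\lambda_{i_{s-1}}).
\]
Since each $w_j$ is a left variable word, $w_j(\lambda_{i_j})$ begins with $\lambda_{i_j}$ and its remaining letters lie in $[k]\cup\{\lambda_{i_j}\}$; erasing the $[k]$-letters and shortening runs therefore yields exactly one $\lambda_{i_j}$ per block, and these do not merge because consecutive entries of $\tau$ differ. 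Thus $\mathbf{w}(\tau)$ has type $\tau$. Conversely, if $\mathbf{w}(v)$ has type $\tau$ with $|v|\mik n$, then $\tau$ is produced by shortening runs among the $\mathcal{L}$-coordinates of $v$, so $|\tau|\mik|v|\mik n$.

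For the ``moreover'' clause of (a), observe that $W=\{\mathbf{w}(v):v\in[k]^{<n+1}\}$, while $\mathbf{w}(k,\mathcal{L})=\{\mathbf{w}(v):v\in\mathrm{W}(k,\mathcal{L}),\ |v|\mik n\}$. A direct check shows that $\mathbf{w}(v)\in[k]^{<\nn}$ if and only if every coordinate of $v$ lies in $[k]$: indeed, if some coordinate $a_j$ of $v$ belongs to $\mathcal{L}$, then the variable $x$ in the left variable word $w_j$ forces the letter $a_j$ into $w_j(a_j)$, and hence into $\mathbf{w}(v)$. Combining with the nonemptiness criterion above yields the decomposition \eqref{e35}.

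For part (b), I would first reduce the type inclusion to the first one: since by definition $\mathbf{w}(k,\mathcal{L},\tau)=\mathbf{w}(k,\mathcal{L})\cap\mathrm{W}(k,\mathcal{L},\tau)$ and the type of a word is an intrinsic property of that word, \eqref{e36} directly implies \eqref{e37}. To prove \eqref{e36}, write $\mathbf{w}=(w,w_0,\dots,w_{n-1})$, $\mathbf{v}=(v,v_0,\dots,v_{m-1})$, and fix the witnessing data $(a_i)_{i=0}^{n-1}\subseteq[k]\cup\{x\}$ and $0\mik n_0<\cdots<n_m\mik n$ supplied by (C1)--(C3). Given $u=(b_0,\dots,b_{s-1})\in\mathrm{W}(k,\mathcal{L})$ with $s\mik m$, define $\tilde{u}=(\tilde{a}_0,\dots,\tilde{a}_{n_s-1})$ by $\tilde{a}_j=a_j$ when $a_j\in[k]$, and $\tilde{a}_j=b_i$ when $a_j=x$ and $n_i\mik j<n_{i+1}$. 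Substituting $b_i$ for the variable $x$ in the identity (C3) for $v_i(x)$ produces
\[
v_i(b_i)=w_{n_i}(b_i)^\con w_{n_i+1}(\tilde{a}_{n_i+1})^\con\cdots^\con w_{n_{i+1}-1}(\tilde{a}_{n_{i+1}-1}),
\]
and concatenating these across $i\in\{0,\dots,s-1\}$ with the formula for $v$ from (C2) gives $\mathbf{v}(u)=\mathbf{w}(\tilde{u})$. Since $|\tilde{u}|=n_s\mik n$, we conclude $\mathbf{v}(u)\in\mathbf{w}(k,\mathcal{L})$, which proves \eqref{e36}. The only real burden is keeping the substitution identity straight across the windows $[n_i,n_{i+1})$; no genuine obstacle arises.
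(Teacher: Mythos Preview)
The paper records Fact~\ref{f31} as an ``elementary fact'' and gives no proof; your direct verification from the definitions is correct and is precisely the routine check the authors leave to the reader. In part~(a) the key point you use---that each $w_j$ is a \emph{left} variable word, so $w_j(\lambda_{i_j})$ begins with $\lambda_{i_j}$---is exactly what makes $\mathbf{w}(\tau)$ have type $\tau$, and in part~(b) your explicit $\tilde u$ built from the witnessing data (C1)--(C3) is the canonical way to exhibit $\mathbf{v}(u)$ as some $\mathbf{w}(\tilde u)$.
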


\subsection{Preliminary tools}

We have the following lemma.
\begin{lem} \label{l32} 
Let $k,n\in\nn$ with $k\meg 2$ and $n\meg 1$. Also let $\mathcal{L}$ be a finite alphabet which is disjoint from $[k]$ and $\mathbf{w}$ be a
Carlson--Simpson sequence over $k$ of dimension $n$. Then for every $m\in [n]$ and every $\tau\in\mathcal{T}(\mathcal{L})$ with $|\tau|=m$ we have
\begin{equation} \label{e38}
\mathbf{w}(k,\mathcal{L},\tau)=\big\{\mathbf{z}(\tau): \mathbf{z}\in \mathrm{Subseq}_m(\mathbf{w})\big\}.
\end{equation}
\end{lem}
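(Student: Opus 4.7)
The plan is to prove the two inclusions in the equality separately. The inclusion "$\supseteq$" follows directly from Fact~\ref{f31}(b) together with a short type computation, while the reverse inclusion "$\subseteq$" requires reconstructing, from a given representation $u=\mathbf{w}(v)$ of an element of $\mathbf{w}(k,\mathcal{L},\tau)$, an $m$-dimensional Carlson--Simpson subsequence $\mathbf{z}$ of $\mathbf{w}$ with $\mathbf{z}(\tau)=u$.

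For "$\supseteq$", let $\mathbf{z}=(z,z_0,\ldots,z_{m-1})\in\mathrm{Subseq}_m(\mathbf{w})$. Since $|\tau|=m=\dim(\mathbf{z})$, the word $\mathbf{z}(\tau)$ lies in $\mathbf{z}(k,\mathcal{L})$, which by Fact~\ref{f31}(b) is contained in $\mathbf{w}(k,\mathcal{L})$. To verify that $\mathbf{z}(\tau)$ has type $\tau$, expand $\mathbf{z}(\tau)=z\con z_0(\lambda_0)\con\ldots\con z_{m-1}(\lambda_{m-1})$; each $z_i$ is a left variable word over $k$, so $z_i(\lambda_i)$ has letters in $[k]\cup\{\lambda_i\}$ and contributes one run of $\lambda_i$'s (of length $\meg 1$) to the $\mathcal{L}$-letters of $\mathbf{z}(\tau)$. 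Erasing the $[k]$-letters, shortening runs, and using $\lambda_i\neq\lambda_{i+1}$ (since $\tau$ is a type) then yields exactly $\tau$.

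For "$\subseteq$", take $u\in\mathbf{w}(k,\mathcal{L},\tau)$, write $u=\mathbf{w}(v)$ with $v=(a_0,\ldots,a_{\ell-1})$ and $\ell\mik n$, and enumerate $J=\{j\in\{0,\ldots,\ell-1\}:a_j\in\mathcal{L}\}$ as $j_0<j_1<\ldots<j_{m'-1}$; the set $J$ is nonempty because $u$ has a type. The same tracing of $\mathcal{L}$-letters shows that the type of $u$ equals the word $(a_{j_0},\ldots,a_{j_{m'-1}})$ with consecutive repetitions collapsed, so matching with $\tau=(\lambda_0,\ldots,\lambda_{m-1})$ produces indices $0=s_0<s_1<\ldots<s_m=m'$ such that $a_{j_s}=\lambda_i$ whenever $s_i\mik s<s_{i+1}$. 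Now set $n_i=j_{s_i}$ for $i<m$ and $n_m=\ell$, and define $b_j=x$ if $j\in J$ and $b_j=a_j$ otherwise (extended to arbitrary $[k]$-values for $j\in[\ell,n)$). Conditions (C1)--(C3) are then immediate -- in particular $b_j=a_j\in[k]$ for all $j<n_0=j_0$ -- producing $\mathbf{z}\in\mathrm{Subseq}_m(\mathbf{w})$. Expanding $\mathbf{z}(\tau)$ yields $w\con w_0(c_0)\con\ldots\con w_{\ell-1}(c_{\ell-1})$, where $c_j=b_j$ if $b_j\in[k]$ and $c_j=\lambda_i$ if $b_j=x$ and $j\in[n_i,n_{i+1})$; either way $c_j=a_j$, the case $j\in J\cap[n_i,n_{i+1})=\{j_{s_i},\ldots,j_{s_{i+1}-1}\}$ using precisely the block equality $a_{j_s}=\lambda_i$. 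Hence $\mathbf{z}(\tau)=\mathbf{w}(v)=u$.

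The only subtle point, and hence the main obstacle, is the correct placement of the cut-points $n_i$ at the left endpoints of the $\lambda_i$-blocks of $J$, so that every $\mathcal{L}$-letter of $v$ falls in the ``right'' block and is recovered by exactly one substitution $x\mapsto\lambda_i$ within the $i$-th variable piece of $\mathbf{z}$. Once this combinatorial choice is made the rest is a transparent concatenation check.
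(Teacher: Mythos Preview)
Your proof is correct and follows essentially the same approach as the paper: both directions are argued separately, and for the inclusion $\subseteq$ you reconstruct $\mathbf{z}$ by replacing the $\mathcal{L}$-letters of $v$ with $x$ and choosing cut-points $n_i$ at the starts of the $\lambda_i$-blocks. Your block-based definition $n_i=j_{s_i}$ is in fact slightly more careful than the paper's formula $n_i=\min\{j:a_j=\lambda_i\}$, since the latter as written misbehaves when the type $\tau$ contains repeated (non-adjacent) letters, whereas your formulation handles that case transparently.
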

\begin{proof} 
Write $\mathbf{w}=(w,w_0,...,w_{n-1})$ and fix $m\in [n]$ and a type $\tau=(\lambda_0,...,\lambda_{m-1})$. By \eqref{e32}, we see
that $\mathbf{z}(\tau)$ is of type $\tau$ for every $\mathbf{z}\in \mathrm{Subseq}_m(\mathbf{w})$. Conversely, let $v=(a_0,...,a_{\ell-1})\in
\mathrm{W}(k,\mathcal{L})$ such that $\mathbf{w}(v)\in \mathbf{w}(k,\mathcal{L},\tau)$. We need to find an $m$-dimensional Carlson--Simpson
subsequence $\mathbf{z}$ of $\mathbf{w}$ such that $\mathbf{z}(\tau)=\mathbf{w}(v)$. Observe that the words $v$ and $\mathbf{w}(v)$
are of the same type, and therefore, $v$ is of type $\tau$. We define $(n_i)_{i=0}^{m-1}$ by the rule 
$n_0=\min\big\{j\in\{0,...,\ell-1\}:a_j=\lambda_0\big\}$ and
\begin{equation} \label{e39}
n_{i+1}=\min\big\{j\in\{n_i+1,...,\ell-1\}: a_j=\lambda_{i+1}\big\}. 
\end{equation}
Notice that $0\mik n_0< ...< n_{m-1}< \ell\mik n$. For every $i\in\{0,...,m-1\}$ we set $a'_i=a_i$ if $a_i\in [k]$; otherwise we set
$a'_i=x$. We define an $m$-dimensional Carlson--Simpson subsequence $\mathbf{z}=(z,z_0,...,z_{m-1})$ of $\mathbf{w}$ as follows. First we set
\begin{equation} \label{e310}
z=w^\con w_0(a'_0)^\con ...^\con w_{n_0-1}(a'_{n_0-1})
\end{equation} 
with the convention that $z=w$ if $n_0=0$. Next for every $i\in\{0,..., m-1\}$ let
\begin{equation} \label{e311}
z_i=w_{n_i}(a'_{n_i})^\con ...^\con w_{n_{i+1}-1}(a'_{n_{i+1}-1}).
\end{equation}  
It is easily checked that $\mathbf{z}$ is an $m$-dimensional Carlson--Simpson subsequence of $\mathbf{w}$ and satisfies
$\mathbf{z}(\tau)=\mathbf{w}(v)$. This shows that $\mathbf{w}(k,\mathcal{L},\tau)\subseteq \{\mathbf{z}(\tau): \mathbf{z}
\in \mathrm{Subseq}_m(\mathbf{w})\}$ and the proof is completed.
\end{proof}
Lemma \ref{l32} has the following consequence. It will enable us to reduce the proof of Theorem \ref{t14} to Theorem \ref{t23}. 
\begin{cor} \label{c33}
Let $k$ and $\mathcal{L}$ be as in Lemma \ref{l32}. Also let $d$ be a positive integer and $\mathbf{w}=(w,w_0,...,w_{2d-1})$ be a
Carlson--Simpson sequence over $k$ of dimension $2d$. Then, setting $\mathbf{w}'=(w,w_0,...,w_{d-1})$, for every $\tau\in\mathcal{T}(\mathcal{L})$ 
of length at most $d$ we have 
\begin{equation} \label{e312}
\mathbf{w}'(k,\mathcal{L},\tau)\subseteq \big\{ \mathbf{z}(\tau): \mathbf{z}\in\mathrm{Subseq}_d(\mathbf{w})\big\}.
\end{equation}
\end{cor}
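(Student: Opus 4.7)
The plan is to apply Lemma \ref{l32} to the ``first half'' $\mathbf{w}'$ of $\mathbf{w}$, producing an $m$-dimensional Carlson--Simpson subsequence that realises the given word, and then to extend this $m$-dimensional subsequence to a $d$-dimensional one by appending $d-m$ of the \emph{spare} left variable words $w_d,\ldots,w_{2d-1}$ that $\mathbf{w}$ has but $\mathbf{w}'$ lacks. The crucial observation is that enlarging the dimension from $m$ to $d$ in this way does not change the value at $\tau$, because $|\tau|=m$ and the appended components never appear in the formula defining $\mathbf{z}(\tau)$.

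In detail: fix a type $\tau=(\lambda_0,\ldots,\lambda_{m-1})$ with $m\mik d$ and a word $w''\in\mathbf{w}'(k,\mathcal{L},\tau)$. Since $\dim(\mathbf{w}')=d\meg m$, Lemma \ref{l32} produces some $\mathbf{z}'=(z',z'_0,\ldots,z'_{m-1})\in\mathrm{Subseq}_m(\mathbf{w}')$ with $\mathbf{z}'(\tau)=w''$. Let $(a'_j)_{j<d}\subseteq[k]\cup\{x\}$ and $(n'_i)_{i=0}^m$ be the data witnessing $\mathbf{z}'\in\mathrm{Subseq}_m(\mathbf{w}')$ via conditions (C1)--(C3); in particular $n'_m\mik d$.

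Now I would define
\[ \mathbf{z}:=(z',z'_0,\ldots,z'_{m-1},w_{n'_m},w_{n'_m+1},\ldots,w_{n'_m+d-m-1}), \]
a Carlson--Simpson sequence of dimension $d$ whose last $d-m$ components are genuine entries of $\mathbf{w}$ since $n'_m+d-m-1\mik 2d-m-1\mik 2d-1$. To certify $\mathbf{z}\in\mathrm{Subseq}_d(\mathbf{w})$ I exhibit explicit data $(c_j)_{j=0}^{2d-1}$ and $(p_i)_{i=0}^d$, namely $p_i:=n'_i$ for $i<m$ and $p_i:=n'_m+(i-m)$ for $i\in\{m,\ldots,d\}$, together with $c_j:=a'_j$ for $j<n'_m$, $c_j:=x$ for $j\in\{n'_m,\ldots,n'_m+d-m-1\}$, and $c_j:=1$ otherwise. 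A mechanical check shows $(p_i)$ is strictly increasing in $\{0,\ldots,2d\}$, $c_{p_i}=x$ for $i<d$, and the sequence produced by (C2)--(C3) is exactly the $\mathbf{z}$ above.

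Since $|\tau|=m$, the word $\mathbf{z}(\tau)$ is assembled using only the first $m+1$ components of $\mathbf{z}$, which coincide with those of $\mathbf{z}'$; therefore $\mathbf{z}(\tau)=\mathbf{z}'(\tau)=w''$, yielding the required inclusion. The only genuine obstacle is the bookkeeping in this extension step, and it is precisely here that the hypothesis $\dim(\mathbf{w})=2d$ is used: we need $d-m$ fresh variable words occurring in $\mathbf{w}$ after position $n'_m$, and since $n'_m\mik d$, dimension $2d$ supplies at least $d\meg d-m$ such words.
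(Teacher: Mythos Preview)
Your proof is correct and follows essentially the same approach as the paper: apply Lemma~\ref{l32} to $\mathbf{w}'$ to obtain an $m$-dimensional subsequence $\mathbf{z}'$ with $\mathbf{z}'(\tau)=w''$, then extend $\mathbf{z}'$ to a $d$-dimensional subsequence $\mathbf{z}$ of $\mathbf{w}$ having $\mathbf{z}'$ as an initial segment, so that $\mathbf{z}(\tau)=\mathbf{z}'(\tau)$. The paper simply states the extension step as an ``elementary observation'' without writing out the witnessing data, whereas you carry out the bookkeeping explicitly; the underlying idea is identical.
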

\begin{proof}
We start with two elementary observations. First notice that if $\mathbf{z}, \mathbf{z'}$ are Carlson--Simpson sequences over $k$
with $\mathbf{z}'$ an initial segment of $\mathbf{z}$, then for every $v\in W(k,\mathcal{L})$ of length at most $\dim(\mathbf{z}')$
we have  $\mathbf{z}(v)=\mathbf{z}'(v)$. Next observe that for every Carlson--Simpson subsequence $\mathbf{z}'$ of $\mathbf{w}'$ there exists
a (not necessarily unique) Carlson--Simpson subsequence $\mathbf{z}$ of $\mathbf{w}$ with $\dim(\mathbf{z})=d$ and such that $\mathbf{z}'$
is an initial segment of $\mathbf{z}$.

Now fix a type $\tau$ of length at most $d$ and set $m=|\tau|$. By Lemma \ref{l32} and using the previous remarks, we see that
\begin{equation} \label{e313}
\mathbf{w}'(k,\mathcal{L},\tau) = \big\{\mathbf{z}'(\tau): \mathbf{z}'\in \mathrm{Subseq}_m(\mathbf{w}')\big\} 
\subseteq \big\{\mathbf{z}(\tau): \mathbf{z}\in \mathrm{Subseq}_d(\mathbf{w})\big\}
\end{equation}
as desired.
\end{proof}

\subsection{Proof of Theorem \ref{t14}}

We claim that 
\begin{equation} \label{e314}
\mathrm{FK}(k,q,d,r)\mik \mathrm{CS}\big(k,2d,d, r^{q^{d+1}}\big)
\end{equation}
for every $k,q,d,r\in\nn$ with $k\meg 2$ and $q,d,r\meg 1$. Indeed, let $\mathcal{L}$ be an alphabet of cardinality $q$ which is disjoint from 
$[k]$. Also let $\mathbf{w}$ be a Carlson--Simpson sequence over $k$ with $\dim(\mathbf{w})\meg \mathrm{CS}\big(k,2d,d, r^{q^{d+1}}\big)$
and fix an $r$-coloring $c:\mathbf{w}(k,\mathcal{L})\to [r]$. We set $T=\{\tau\in \mathcal{T}(\mathcal{L}): |\tau|\mik d\}$. We define a
coloring $\tilde{c}:\mathrm{Subseq}_d(\mathbf{w})\to [r]^T$ by 
\begin{equation} \label{e315}
\tilde{c}(\mathbf{z})= \left\langle c( \mathbf{z}(\tau)\big): \tau \in T \right\rangle.
\end{equation}
By Fact \ref{f31}, the coloring $\tilde{c}$ is well-defined. Also notice that
\begin{equation} \label{e316}
|T|\mik \sum_{i=1}^d q^i \mik q^{d+1}.
\end{equation}
Hence, by Theorem \ref{t23}, there exists a Carlson--Simpson subsequence $\mathbf{u}$ of $\mathbf{w}$ of dimension $2d$ such that the set 
$\mathrm{Subseq}_d(\mathbf{u})$ is monochromatic with respect to $\tilde{c}$. Let $\mathbf{v}$ be the unique Carlson--Simpson sequence
over $k$ of dimension $d$ which is an initial segment of $\mathbf{u}$. Notice, in particular, that $\mathbf{v}$ is a Carlson--Simpson
subsequence of $\mathbf{w}$. Fix a type $\tau$. If $\tau\notin T$, then by Fact \ref{f31} the set $\mathbf{v}(k,\mathcal{L},\tau)$ is empty.
Otherwise, by Corollary \ref{c33} and the choice of $\mathbf{u}$, the set $\mathbf{v}(k,\mathcal{L},\tau)$ is monochromatic. This shows that
\eqref{e314} is satisfied.

Finally, by Corollary \ref{c24} and taking into account this estimate, we see that the numbers $\mathrm{FK}(k,q,d,r)$ are bounded
by a primitive recursive function belonging to the class $\mathcal{E}^6$. The proof of Theorem \ref{t14} is thus completed.


\section{Flat sets}

\numberwithin{equation}{section}

\subsection{Motivation}

As we have mentioned in the introduction, there is no analogue of Ramsey's classical Theorem for colorings of pairs of $[k]^{<\nn}$ (or, more
generally, of subsets of $[k]^{<\nn}$ of a given cardinality). Nevertheless, there is non-trivial information on this Ramsey-type problem.
The relevant tools will be developed in this and the next two sections. Specifically, we shall define certain classes of finite subsets of
$[k]^{<\nn}$ with the following crucial properties. Firstly, each class is partition regular. Secondly, their union is sufficiently ``dense"
in the sense that for every nonempty finite subset $F$ of $[k]^{<\nn}$ one can find an element of one of the classes that contains $F$ and
whose cardinality is effectively controlled by the cardinality of $F$.

In this section we make the first step towards this goal. In particular, we define the family of \textit{flat} sets which are the building
blocks of the elements of the classes mentioned above. Their properties are discussed in \S 4.3 and \S 4.4.

Finally we note that the aforementioned analysis and the corresponding tools developed in this paper, form the basis for a complete classification 
of those classes of finite subsets of Carlson--Simpson trees which are partition regular. The details of this classification will appear elsewhere.

\subsection{Definition}

Let $k\in\nn$ with $k\meg 2$. A finite subset $F$ of $[k]^{<\nn}$ will be called \textit{flat} provided that (a) $|F|\meg 2$, and (b) there
exists $n\in\nn$ such that $F\subseteq [k]^n$. By $\mathrm{Fl}([k]^{<\nn})$ we shall denote the set of all flat subsets of $[k]^{<\nn}$. If 
$p\in\nn$ with $p\meg 2$, then $\mathrm{Fl}_p([k]^{<\nn})$ stands for the family of all flat sets of cardinality $p$. Moreover, for every 
Carlson--Simpson tree $W$ of $[k]^{<\nn}$ by $\mathrm{Fl}(W)$ we shall denote the set of all flat subsets of $[k]^{<\nn}$ which are contained
in $W$. The set $\mathrm{Fl}_p(W)$ is analogously defined. Notice, in particular, that $\mathrm{Fl}_p(W)\neq\varnothing$ if and only if
$2\mik p\mik k^{\dim(W)}$. Therefore,
\begin{equation} \label{e41}
\mathrm{Fl}(W)=\bigcup_{p=2}^{k^{\dim(W)}}\mathrm{Fl}_p(W).
\end{equation}

\subsection{Word representation of flat sets and their type}

Let $k$ and $p$ be a pair of integers with $k,p\meg 2$. These parameters will be fixed throughout this subsection. We set
\begin{equation} \label{e42}
\Delta([k]^p)=\big\{(\underbrace{a,...,a}_{p-\mathrm{times}}): a\in [k]\big\}
\end{equation}
and 
\begin{equation} \label{e43}
\mathcal{L}_{p}=[k]^p\setminus  \Delta([k]^p).
\end{equation}  
As in \S 3.1, let $\mathrm{W}(k,\mathcal{L}_p)$ be the set of all words over $[k]\cup \mathcal{L}_p$. We shall define a map
\begin{equation} \label{e44}
\mathrm{R}_{p}:\mathrm{Fl}_{p}([k]^{<\nn})\to \mathrm{W}(k, \mathcal{L}_p)
\end{equation}
as follows. Let $F\in\mathrm{Fl}_p([k]^{<\nn})$ be arbitrary and write the set $F$ in lexicographical increasing order as
$\{t_0<_{\text{lex}}...<_{\text{lex}} t_{p-1}\}$. Also let $n\in\nn$ be such that $F\subseteq [k]^{n}$ and notice that $n\meg 1$.
For every $i\in\{0,...,n-1\}$ and every $j\in\{0,...,p-1\}$ let $a_{i,j}$ be the $i$-th coordinate of $t_j$ and set
$\overline{a}_i=(a_{i,0},...,a_{i,p-1})$. Observe that $\overline{a}_i\in[k]^p$ for every $i\in\{0,...,n-1\}$. We set
\begin{equation} \label{e45}
\tilde{a}_i =
\begin{cases}
a & \text{if \ } \overline{a}_i=(\underbrace{a,...,a}_{p-\mathrm{times}})\in \Delta([k]^p), \\
\overline{a}_i & \text{if \ } \overline{a}_i\in [k]^p\setminus\Delta([k]^p).
\end{cases}
\end{equation}
Finally we define 
\begin{equation} \label{e46}
\mathrm{R}_{p}(F)= (\tilde{a}_0,...,\tilde{a}_{n-1}).
\end{equation}
We call the word $\mathrm{R}_p(F)$ the \textit{word representation} of the flat set $F$.

Before we proceed let us give a specific example. Let $k=2$ and $p=4$ and consider the subset $F$ of $[2]^3$ consisting of the elements
$(1,2,2)$, $(1,1,2)$, $(2,1,2)$ and $(2,2,2)$. We order $F$ lexicographically as
\begin{equation} \label{e47}
F=\big\{ (1,1,2)<_{\mathrm{lex}} (1,2,2)<_{\mathrm{lex}} (2,1,2)<_{\mathrm{lex}} (2,2,2) \big\}
\end{equation} 
and we observe that
\begin{equation} \label{e48}
\overline{a}_0=(1,1,2,2), \ \overline{a}_1=(1,2,1,2) \ \text{ and } \ \overline{a}_2=(2,2,2,2).
\end{equation} 
Thus, $\mathrm{R}_4(F)$ is the word $\big((1, 1,  2,  2), (1, 2, 1, 2), 2\big)\in \mathrm{W}(2,\mathcal{L}_4)$.

Now let $F\in\mathrm{Fl}_p([k]^{<\nn})$. We define the \textit{type} of $F$ to be the type of its word representation $\mathrm{R}_p(F)$ in
$\mathrm{W}(k,\mathcal{L}_p)$. In particular, the type of $F$ is an element of $\mathcal{T}(\mathcal{L}_p)$. Notice that for every integer
$p'\meg 2$ with $p'\neq p$ we have that $\mathcal{L}_{p'}\cap \mathcal{L}_p=\varnothing$, and so, $\mathcal{T}(\mathcal{L}_{p'})\cap
\mathcal{T}(\mathcal{L}_p)=\varnothing$. Therefore, if $F$ and $F'$ are flat sets of different cardinality, then their types are different.

For every $\tau\in\mathcal{T}(\mathcal{L}_p)$ by $\mathrm{Fl}_{p,\tau}([k]^{<\nn})$ we shall denote the set of all $F\in\mathrm{Fl}_{p}([k]^{<\nn})$ 
of type $\tau$. It is easy to see that if $\mathrm{Fl}_{p,\tau}([k]^{<\nn})$ is nonempty, then $p\mik k^{|\tau|}$. Notice, however, that not 
every $\tau\in\mathcal{T}(\mathcal{L}_p)$ with $p\mik k^{|\tau|}$ is realized as the type of a flat set of cardinality $p$. This is due to the
fact that the type of a flat set $F$ is determined after we have ordered $F$ lexicographically. Taking into account these remarks, we set
\begin{equation} \label{e49}
\mathcal{T}_{\mathrm{Fl}}[k,p]=\big\{\tau\in\mathcal{T}(\mathcal{L}_p): \mathrm{Fl}_{p,\tau}([k]^{<\nn})\neq\varnothing\big\}.
\end{equation}
Of course, we can relativize the above definitions to Carlson--Simpson trees. Specifically, for every Carlson--Simpson tree $W$ of $[k]^{<\nn}$
and every $\tau\in\mathcal{T}(\mathcal{L}_p)$ by $\mathrm{Fl}_{p,\tau}(W)$ we shall denote the set of all $F\in\mathrm{Fl}_{p,\tau}([k]^{<\nn})$
which are contained in $W$. Observe that $\mathrm{Fl}_{p,\tau}(W)$ is nonempty if and only if $\tau\in\mathcal{T}_{\mathrm{Fl}}[k,p]$ and
$|\tau|\mik \dim(W)$. 

We summarize, below, some basic properties of the map $\mathrm{R}_p$. 
\begin{fact} \label{f41}
Let $k,p\in\nn$ with $k,p\meg2$ and $\tau\in\mathcal{T}_{\mathrm{Fl}}[k,p]$. Then the following hold.
\begin{enumerate}
\item[(a)] The map $\mathrm{R}_{p}:\mathrm{Fl}_p([k]^{<\nn})\to \mathrm{W}(k, \mathcal{L}_p)$ is an injection.
\item[(b)] The restriction of $\mathrm{R}_{p}$ to $\mathrm{Fl}_{p,\tau}([k]^{<\nn})$ is onto $\mathrm{W}(k,\mathcal{L}_p,\tau)$.
In particular, the map $\mathrm{R}_{p}:\mathrm{Fl}_{p,\tau}([k]^{<\nn})\to \mathrm{W}(k,\mathcal{L}_p,\tau)$ is a bijection.
\end{enumerate}
\end{fact}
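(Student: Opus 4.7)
The plan is to invert $\mathrm{R}_p$ explicitly and then observe that the lexicographic order of the decoded tuples depends only on the type. Given any $w = (\tilde a_0,\dots,\tilde a_{n-1}) \in \mathrm{W}(k,\mathcal{L}_p)$, I define a candidate decoding by setting $\overline{a}_i = (\tilde a_i,\dots,\tilde a_i) \in \Delta([k]^p)$ when $\tilde a_i \in [k]$ and $\overline{a}_i = \tilde a_i$ when $\tilde a_i \in \mathcal{L}_p$, and then putting $t_j(w) = (a_{0,j},\dots,a_{n-1,j})$ for $j\in\{0,\dots,p-1\}$, where $a_{i,j}$ is the $j$-th coordinate of $\overline{a}_i$. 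Comparing with the definition of $\mathrm{R}_p$ in \eqref{e46}, if $w = \mathrm{R}_p(F)$ with $F = \{s_0 <_{\mathrm{lex}} \dots <_{\mathrm{lex}} s_{p-1}\}$ then $t_j(w) = s_j$ for each $j$, so $F$ is uniquely recovered from $w$. This establishes part (a).

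For part (b), fix $\tau = (\lambda_0,\dots,\lambda_{m-1}) \in \mathcal{T}_{\mathrm{Fl}}[k,p]$ and an arbitrary $w \in \mathrm{W}(k,\mathcal{L}_p,\tau)$. It suffices to show that the decoded tuples $t_0(w),\dots,t_{p-1}(w)$ are distinct and listed in strictly increasing lex order: then $F := \{t_0(w),\dots,t_{p-1}(w)\}$ lies in $\mathrm{Fl}_{p,\tau}([k]^{<\nn})$ and satisfies $\mathrm{R}_p(F) = w$ directly from the definition, while injectivity from (a) promotes this to a bijection onto $\mathrm{W}(k,\mathcal{L}_p,\tau)$.

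The key observation is that, for any $j < j'$, the lex comparison between $t_j(w)$ and $t_{j'}(w)$ depends only on $\tau$, not on $w$: positions $i$ with $\tilde a_i \in [k]$ contribute equal coordinates to every $t_j(w)$, and a run of consecutive positions carrying the same letter $\lambda \in \mathcal{L}_p$ produces the same coordinate comparison at each position of the run. Hence the first coordinate at which $t_j(w)$ and $t_{j'}(w)$ differ is located at the first occurrence in $w$ of the first letter in the sequence $\lambda_0,\lambda_1,\dots$ whose $j$-th and $j'$-th coordinates disagree, and the sign of the comparison is then determined by those coordinates of that $\lambda$. Both data depend only on $\tau$ and on the pair $(j,j')$.

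To conclude, I invoke the hypothesis $\tau \in \mathcal{T}_{\mathrm{Fl}}[k,p]$ to pick a witness $F^* = \{s^*_0 <_{\mathrm{lex}} \dots <_{\mathrm{lex}} s^*_{p-1}\}$ and let $w^* = \mathrm{R}_p(F^*) \in \mathrm{W}(k,\mathcal{L}_p,\tau)$. By part (a) we have $t_j(w^*) = s^*_j$, which are strictly lex-increasing in $j$, and by the key observation the same strict increase holds for the decoded tuples of $w$, yielding surjectivity. I expect the main obstacle to be formalizing the invariance of pairwise lex comparisons within a fixed type, in particular the bookkeeping between the $\mathcal{L}_p$-letters appearing in $w$, their consecutive repetitions, and the letters $\lambda_0,\dots,\lambda_{m-1}$ of $\tau$; once this invariance is cleanly stated, the reduction to the single witness $F^*$ and the remaining verifications are immediate from the definitions.
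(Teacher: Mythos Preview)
Your argument is correct. The paper itself does not supply a proof of Fact~\ref{f41}: immediately after the statement it writes ``Fact~\ref{f41} is a straightforward consequence of the relevant definitions'' and moves on. Your explicit decoding $w\mapsto (t_0(w),\dots,t_{p-1}(w))$ is exactly the natural inverse, and the only point requiring thought---that the decoded $p$-tuple is in strict lexicographic order---you handle cleanly via the observation that the pairwise lex comparison between $t_j(w)$ and $t_{j'}(w)$ is determined by the first letter of $\tau$ whose $j$-th and $j'$-th coordinates disagree, hence depends only on $\tau$ and not on $w$. Transferring the ordering from a witness $F^*\in\mathrm{Fl}_{p,\tau}([k]^{<\nn})$ then finishes the job. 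This is presumably the argument the authors had in mind when calling the fact ``straightforward''; you have simply written it out in full.
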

Fact \ref{f41} is a straightforward consequence of the relevant definitions. We will also need the following elementary fact. 
\begin{fact} \label{f42}
Let $k,p\in\nn$ with $k,p\meg 2$. Also let $V$ be a Carlson--Simpson tree of $[k]^{<\nn}$ and denote by $\mathbf{v}$ the Carlson--Simpson
sequence that generates $V$. Finally let $\tau\in\mathcal{T}(\mathcal{L}_p)$ such that the set $\mathrm{Fl}_{p,\tau}(V)$ is nonempty. Then
the following are satisfied.
\begin{enumerate}
\item[(a)] Setting $m=\dim(V)$, we have that $\mathrm{Fl}_{p,\tau}(V)=\mathrm{I}_V\big(\mathrm{Fl}_{p,\tau}([k]^{<m+1})\big)$ where $\mathrm{I}_V$
is the canonical isomorphism associated to $V$.
\item[(b)] The restriction of $\mathrm{R}_{p}$ to $\mathrm{Fl}_p(V)$ is an injection into $\mathbf{v}(k,\mathcal{L}_p)$.
\item[(c)] The restriction of $\mathrm{R}_{p}$ to $\mathrm{F}_{p,\tau}(V)$ is onto $\mathbf{v}(k,\mathcal{L}_p,\tau)$.
In particular, the map $\mathrm{R}_{p}:\mathrm{Fl}_{p,\tau}(V)\to \mathbf{v}(k,\mathcal{L}_p,\tau)$ is a bijection.
\end{enumerate}
\end{fact}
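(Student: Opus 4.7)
The plan is to reduce the three assertions to a single coordinate-wise computation of $\mathrm{R}_p$ on images under the canonical isomorphism $\mathrm{I}_V$, and then quote Fact~\ref{f41}. Write $\mathbf{v}=(v,v_0,\ldots,v_{m-1})$. The first observation I would use is that, because each $v_i$ is a \emph{left} variable word, the restriction of $\mathrm{I}_V$ to each $[k]^n$ (with $n\mik m$) is an order-preserving bijection onto $V(n)$ for the lexicographic order: two images first disagree at the leftmost letter (namely the $x$) of the first block $v_i$ on which their $[k]^{<m+1}$-preimages disagree. Consequently $\mathrm{I}_V$ sends flat subsets of $[k]^{<m+1}$ to flat subsets of $V$, and the lexicographic enumeration required by the definition of $\mathrm{R}_p$ is preserved.

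Now fix $F\in\mathrm{Fl}_p([k]^{<m+1})$ with $F\subseteq[k]^n$, and write $\mathrm{R}_p(F)=(\tilde{a}_0,\ldots,\tilde{a}_{n-1})$. Reading the $p$ words $\mathrm{I}_V(t_0),\ldots,\mathrm{I}_V(t_{p-1})$ in parallel, position by position, one checks that the $|v|$ positions coming from $v$ and every non-$x$ position of each $v_i$ yield a constant $p$-tuple (hence a $[k]$-letter of $\mathrm{R}_p(\mathrm{I}_V(F))$), while each $x$-position inside $v_i$ yields the tuple $\overline{a}_i$, which by definition collapses to $\tilde{a}_i$. Re-assembling the blocks gives
\[
\mathrm{R}_p\big(\mathrm{I}_V(F)\big)=v^{\con}v_0(\tilde{a}_0)^{\con}\cdots^{\con}v_{n-1}(\tilde{a}_{n-1})=\mathbf{v}\big(\mathrm{R}_p(F)\big),
\]
and since this substitution pattern commutes with the two operations defining the type (erasing $[k]$-letters and collapsing runs), $\mathrm{I}_V(F)$ and $F$ have the same type. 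This immediately yields $\mathrm{I}_V(\mathrm{Fl}_{p,\tau}([k]^{<m+1}))\subseteq\mathrm{Fl}_{p,\tau}(V)$, and the reverse inclusion follows from the remark that any flat $F\subseteq V$ lies in some level $V(n)$ and is therefore the $\mathrm{I}_V$-image of a flat subset of $[k]^n\subseteq[k]^{<m+1}$; this proves (a). Part (b) then falls out by combining the displayed formula (which places $\mathrm{R}_p(F)$ in $\mathbf{v}(k,\mathcal{L}_p)$, using $n\mik m=\dim(\mathbf{v})$) with the global injectivity of $\mathrm{R}_p$ provided by Fact~\ref{f41}(a).

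For part (c) the containment $\mathrm{R}_p(\mathrm{Fl}_{p,\tau}(V))\subseteq\mathbf{v}(k,\mathcal{L}_p,\tau)$ has already been covered, so only surjectivity remains. Given $y\in\mathbf{v}(k,\mathcal{L}_p,\tau)$, write $y=\mathbf{v}(u)$ for some $u=(u_0,\ldots,u_{n-1})\in\mathrm{W}(k,\mathcal{L}_p)$ of length $n\mik m$. Fact~\ref{f41}(b) applied inside $[k]^{<\nn}$ yields a unique $F''\in\mathrm{Fl}_{p,\tau}([k]^{<\nn})$ with $\mathrm{R}_p(F'')=y$, and unpacking the definition of $\mathrm{R}_p$ shows that the $j$-th element of $F''$ is obtained from the concatenation $v^{\con}v_0^{\con}\cdots^{\con}v_{n-1}$ by substituting each $x$ of $v_i$ with $u_i$ itself (when $u_i\in[k]$) or with the $j$-th coordinate of $u_i$ (when $u_i\in\mathcal{L}_p\subseteq[k]^p$); in either case the inserted letter lies in $[k]$, so each element of $F''$ belongs to $V(n)\subseteq V$ and hence $F''\in\mathrm{Fl}_{p,\tau}(V)$.

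The only delicate point in the entire argument is the order-preservation claim that launches the coordinate-wise reading; without it, the lexicographic enumerations on the two sides of $\mathrm{I}_V$ might differ and the identity $\mathrm{R}_p\circ\mathrm{I}_V=\mathbf{v}\circ\mathrm{R}_p$ would fail. Everything else is routine unpacking of the definitions, where Fact~\ref{f41} is invoked so as to avoid re-proving the existence and uniqueness of flat sets with a prescribed word representation at the ambient level.
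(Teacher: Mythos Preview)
The paper does not actually prove Fact~\ref{f42}; it is stated as an ``elementary fact'' and left to the reader. Your proposal is correct and supplies exactly the computation one would expect: the key identity $\mathrm{R}_p\big(\mathrm{I}_V(F)\big)=\mathbf{v}\big(\mathrm{R}_p(F)\big)$, established via the coordinate-wise reading you describe, is the natural bridge between Fact~\ref{f41} and Fact~\ref{f42}, and your observation that $\mathrm{I}_V$ preserves the lexicographic order on each level (because the $v_i$ are \emph{left} variable words) is precisely the point that makes this identity go through.
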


\subsection{Ramsey properties of flat sets}

We have the following proposition.

\begin{prop} \label{p43}
Let $k,d,r\in\nn$ with $k\meg 2$ and $d,r\meg 1$. Then there exists a positive integer $N$ with the following property. If $n\meg N$, then
for every $n$-dimensional Carlson--Simpson tree $W$ of $[k]^{<\nn}$ and every $r$-coloring of $\mathrm{Fl}(W)$ there exists a $d$-dimensional
Carlson--Simpson subtree $V$ of $W$ such that every pair of flat subsets of $V$ with the same type is monochromatic.
\end{prop}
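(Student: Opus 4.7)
The plan is to handle each cardinality $p \in \{2, 3, \ldots, k^d\}$ of flat subsets separately by iterating Theorem \ref{t13}, using the word-representation map $\mathrm{R}_p$ from \S 4.3 to convert each coloring of flat sets into a coloring of a subset of $\mathbf{w}(k,\mathcal{L}_p)$ to which the refined Furstenberg--Katznelson result applies.

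Concretely, I would define the integer $N$ by the recursion $N_{k^d+1}=d$ and
\[ N_p = \mathrm{FK}\bigl(k,|\mathcal{L}_p|,N_{p+1},r\bigr) \]
for $p=k^d, k^d-1,\ldots,2$, and set $N=N_2$. Given an $n$-dimensional Carlson--Simpson tree $W$ with $n\meg N$ and an $r$-coloring $c:\mathrm{Fl}(W)\to [r]$, I would construct a decreasing sequence of Carlson--Simpson subtrees $W=V_2\supseteq V_3\supseteq\cdots\supseteq V_{k^d+1}$ such that $\dim(V_p)\meg N_p$, and maintaining the inductive invariant that for every $q\in\{2,\ldots,p-1\}$ any two flat subsets of $V_p$ of cardinality $q$ with the same type are $c$-monochromatic.

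For the inductive step, let $\mathbf{v}$ denote the generating sequence of $V_p$. By Fact \ref{f42}(b), the map $\mathrm{R}_p:\mathrm{Fl}_p(V_p)\to \mathbf{v}(k,\mathcal{L}_p)$ is an injection, so I would transfer the restriction $c|_{\mathrm{Fl}_p(V_p)}$ along $\mathrm{R}_p$ and extend arbitrarily to an $r$-coloring $\tilde{c}$ of all of $\mathbf{v}(k,\mathcal{L}_p)$. Since $\dim(V_p)\meg N_p = \mathrm{FK}(k,|\mathcal{L}_p|,N_{p+1},r)$, Theorem \ref{t13} yields a Carlson--Simpson subsequence $\mathbf{v}'$ of $\mathbf{v}$ of dimension $N_{p+1}$ such that for every type $\tau\in\mathcal{T}(\mathcal{L}_p)$ the set $\mathbf{v}'(k,\mathcal{L}_p,\tau)$ is either empty or $\tilde{c}$-monochromatic. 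Taking $V_{p+1}$ to be the Carlson--Simpson subtree generated by $\mathbf{v}'$, for any $F,F'\in\mathrm{Fl}_{p,\tau}(V_{p+1})$ Fact \ref{f42}(c) places $\mathrm{R}_p(F),\mathrm{R}_p(F')$ inside $\mathbf{v}'(k,\mathcal{L}_p,\tau)$, so $c(F)=c(F')$ as required. The monochromaticity obtained at earlier stages passes to $V_{p+1}$ automatically, because the type of a flat set is intrinsic and $V_{p+1}\subseteq V_p$.

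Setting $V=V_{k^d+1}$ yields a $d$-dimensional Carlson--Simpson subtree of $W$ with the desired property; since flat subsets of $V$ have cardinality at most $k^d$, all relevant cardinalities have been processed. The main obstacle is organizational rather than conceptual: one must lay out the recursion for $N$, verify that subtree passages preserve previously established monochromaticity, and check that the word-representation machinery of \S 4.3 makes the reduction to Theorem \ref{t13} an honest bijective correspondence on each slice $\mathrm{Fl}_{p,\tau}$. Once these bookkeeping points are settled, the substantive combinatorial content is supplied entirely by Theorem \ref{t13}.
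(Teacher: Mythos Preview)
Your argument is correct, but it takes a different organizational route from the paper. The paper does not iterate: it combines all the alphabets at once, setting $\mathcal{L}=\bigcup_{p=2}^{k^d}\mathcal{L}_p$ (of total size $q=\sum_{p=2}^{k^d}(k^p-k)$), defines a single representation map $\mathrm{R}:\mathcal{F}\to\mathbf{w}(k,\mathcal{L})$ by $\mathrm{R}(F)=\mathrm{R}_{|F|}(F)$ on the flat sets whose type has length at most $d$, transfers the coloring, and applies Theorem~\ref{t13} \emph{once} to obtain the desired $d$-dimensional subtree. This works because the alphabets $\mathcal{L}_p$ are pairwise disjoint, so types coming from different cardinalities never collide in $\mathcal{T}(\mathcal{L})$.

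Your iterated scheme is perfectly valid---the invariant is preserved because types are intrinsic and $\mathrm{Fl}_{q,\tau}(V_{p+1})\subseteq\mathrm{Fl}_{q,\tau}(V_p)$---but it buys you a substantially worse bound: a tower of $k^d-1$ nested $\mathrm{FK}$-calls in the dimension argument, versus the paper's single call $N=\mathrm{FK}(k,q,d,r)$. Since $\mathrm{FK}$ already grows rapidly in its third argument, the single-shot approach is both shorter to write and quantitatively superior.
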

\begin{proof} 
We set
\begin{equation} \label{e410}
q=\sum_{p=2}^{k^d}(k^p-k).
\end{equation}
We will show that the desired positive integer $N$ can be chosen to be the number $\mathrm{FK}(k,q,d,r)$. To this end let 
$n\meg\mathrm{FK}(k,q,d,r)$ be arbitrary. Also let $W$ be an $n$-dimensional Carlson--Simpson tree of $[k]^{<\nn}$ and denote
by $\mathbf{w}$ its generating sequence. Fix a coloring $c:\mathrm{Fl}(W)\to [r]$. Let
\begin{equation} \label{e411}
\mathcal{L}=\bigcup_{p=2}^{k^d}\mathcal{L}_p.
\end{equation}
By the definition of the finite alphabet $\mathcal{L}_p$ in \eqref{e43} and the choice of $q$ in \eqref{e410}, we see that
$|\mathcal{L}|=q$. Next let $\mathcal{F}$ be the subset of $\mathrm{Fl}(W)$ defined by
\begin{equation} \label{e412}
F\in\mathcal{F} \Leftrightarrow \text{the length of the type of $F$ is at most $d$.}
\end{equation}
Notice that $2\mik |F|\mik k^d$ for every $F\in\mathcal{F}$. We define $\mathrm{R}: \mathcal{F}\to \mathbf{w}(k,\mathcal{L})$ by
the rule $\mathrm{R}(F)=\mathrm{R}_{|F|}(F)$. It is easy to see that the map $\mathrm{R}$ is a well-defined injection.

Finally let $\tilde{c}:\mathbf{w}(k,\mathcal{L})\to [r]$ be defined by
\begin{equation} \label{e413}
\tilde{c}(w)=
\begin{cases}
c(F)  & \text{if there exists } F\in\mathcal{F} \text{ with } \mathrm{R}(F)=w, \\
r     & \text{otherwise}. \\
\end{cases}
\end{equation}
Since $\dim(\mathbf{w})\meg \mathrm{FK}(k,q,d,r)$, by Theorem \ref{t14}, there exists $\mathbf{v}\in\mathrm{Subseq}_d(\mathbf{w})$
such that the set $\mathbf{v}(k,\mathcal{L},\tau)$ is monochromatic with respect to $\tilde{c}$ for every $\tau\in\mathcal{T}(\mathcal{L})$
with $|\tau|\mik d$. Let $V$ be the Carlson--Simpson tree generated by $\mathbf{v}$. By Fact \ref{f42}, we see that $V$ is as desired.
\end{proof}


\section{Basic sets}

\numberwithin{equation}{section}

In this section we continue the analysis outlined in \S 4.1. We start with the following definition.
\begin{defn} \label{d51}
A \emph{basic set} of $[k]^{<\nn}$ is a subset of $[k]^{<\nn}$ of the form
\begin{equation} \label{e51}
B= \{s\}\cup \bigcup_{i=0}^{\ell-1} (s^\con F_0^\con ... ^\con F_i)
\end{equation}
where $\ell$ is a positive integer, $s$ is a word over $k$ and for every $i\in \{0,...,\ell-1\}$ the set $F_i$ is a flat subset of $[k]^{<\nn}$
whose infimum $\wedge F_i$ is the empty word.
\end{defn}
Notice that the sequence $(s, F_0,..., F_{\ell-1})$ that generates a basic set $B$ via formula \eqref{e51} is unique. It will be called the
\textit{generating sequence} of $B$. The word $s$ will be called the \textit{top} of $B$, while the positive integer $\ell$ will be called the
\textit{dimension} of $B$ and will be denoted by $\dim(B)$. 

Let $B$ be a basic set of $[k]^{<\nn}$ and $(s,F_0,...,F_{\ell-1})$ be its generating sequence. The \textit{level width sequence} of $B$, denoted
by $\textbf{p}(B)$, is defined to be the sequence $(|F_0|,...,|F_{\ell-1}|)$. The integer $\max_{0\mik i\mik\ell-1}|F_i|$ will be called the
\textit{width} of $B$ and will be denoted by $w(B)$. Finally, the \textit{type} of $B$ is defined to be the sequence $(\tau_0,...,\tau_{\ell-1})$
where $\tau_i$ is the type of the flat set $F_i$ for every $i\in\{0,...,\ell-1\}$. It will be denoted by $\mathbf{t}(B)$. Notice the following
rigidity property of the type of $B$: if $B'$ is another basic set with $\mathbf{t}(B')=\mathbf{t}(B)$, then $\dim(B')=\dim(B)$ and
$\mathbf{p}(B')=\mathbf{p}(B)$.

By $\mathrm{B}([k]^{<\nn})$ we shall denote the set of all basic sets of $[k]^{<\nn}$. The set of all basic sets of $[k]^{<\nn}$ which are
contained in a Carlson--Simpson tree $W$ of $[k]^{<\nn}$ will be denoted by $\mathrm{B}(W)$.

Now let $\ell$ be a positive integer and $\mathbf{p}=(p_0,...,p_{\ell-1})$ be a finite sequence in $\nn$ with $p_i\meg 2$ for every
$i\in\{0,...,\ell-1\}$. Also let $\mathbf{t}=(\tau_0,...,\tau_{\ell-1})$ be such that $\tau_i\in\mathcal{T}(\mathcal{L}_{p_i})$ for every
$i\in\{0,...,\ell-1\}$, where $\mathcal{L}_{p_i}$ is as in \eqref{e43}. We set
\begin{equation} \label{e52}
\mathrm{B}_{\ell,\mathbf{p},\mathbf{t}}([k]^{<\nn})= \big\{ B\in \mathrm{B}([k]^{<\nn}): \dim(B)=\ell, \mathbf{p}(B)=\mathbf{p} 
\text{ and } \mathbf{t}(B)=\mathbf{t} \big\}.
\end{equation}
For every Carlson--Simpson tree $W$ of $[k]^{<\nn}$ the set $\mathrm{B}_{\ell,\mathbf{p},\mathbf{t}}(W)$ is analogously defined. Observe that
the set $\mathrm{B}_{\ell,\mathbf{p},\mathbf{t}}([k]^{<\nn})$ is nonempty if and only if $\tau_i\in\mathcal{T}_{\mathrm{Fl}}[k,p_i]$ for every
$i\in\{0,...,\ell-1\}$, where $\mathcal{T}_{\mathrm{Fl}}[k,p_i]$ is as in \eqref{e49}. Respectively, if $W$ is a Carlson--Simpson tree $W$ of
$[k]^{<\nn}$, then the set $\mathrm{B}_{\ell,\mathbf{p},\mathbf{t}}(W)$ is nonempty if and only if $\tau_i\in\mathcal{T}_{\mathrm{Fl}}[k,p_i]$
for every $i\in\{0,...,\ell-1\}$ and $\sum_{i=0}^{\ell-1}|\tau_i|\mik \dim(W)$.

Much of our interest on basic sets stems from the fact that they possess strong structural properties. In particular, we have the following
theorem which is the main result of this section.
\begin{thm} \label{t52}
Let $k,d,r\in\nn$ with $k\meg 2$ and $d,r\meg 1$. Then there exists a positive integer $N$ with the following property. If $n\meg N$, then
for every $n$-dimensional Carlson--Simpson tree $W$ of $[k]^{<\nn}$ and every $r$-coloring of $\mathrm{B}(W)$ there exists a $d$-dimensional
Carlson--Simpson subtree $U$ of $W$ such that every pair of basic subsets of $U$ with the same type is monochromatic. The least integer
$N$ with this property will be denoted by $\mathrm{Ram_B}(k,d,r)$.

Moreover, there exists a primitive recursive function $\chi:\nn^3\to\nn$ belonging to the class $\mathcal{E}^{6}$ of Grzegorczyk's
hierarchy such that 
\begin{equation} \label{e53}
\mathrm{Ram_B}(k,d,r)\mik \chi(k,d,r)
\end{equation}
for every $k,d,r\in\nn$ with $k\meg 2$ and $d,r\meg 1$.
\end{thm}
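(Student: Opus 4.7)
The plan is to adapt the encoding argument used in the proof of Proposition~\ref{p43} to the multi-level structure of basic sets and then invoke Theorem~\ref{t13}. The naive attempt -- concatenating the word representations $\mathrm{R}_{p_i}(F_i)$ of the flat sets that generate a basic set $B$, preceded by its top $s$ -- does not work: the ``erase $[k]$-letters, then shorten $\mathcal{L}$-runs'' operation defining the type of a word may merge the trailing letter of $\tau_i$ with the leading letter of $\tau_{i+1}$, so distinct type sequences $\mathbf{t}(B)$ can collide. I will fix this by tagging each auxiliary alphabet by the index $i\in\{0,\ldots,d-1\}$ of the flat set $F_i$ it represents.

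Concretely, for fixed $k\meg 2$, $d\meg 1$ and $r\meg 1$, take $\mathcal{L}_{p,i}$ to be a disjoint copy of $\mathcal{L}_p$ for every $p\in\{2,\ldots,k^d\}$ and $i\in\{0,\ldots,d-1\}$, and set $\mathcal{L}=\bigcup_{p,i}\mathcal{L}_{p,i}$, of cardinality $q=d\sum_{p=2}^{k^d}(k^p-k)$. Let $W$ be a Carlson--Simpson tree of $[k]^{<\nn}$ of dimension $n\meg \mathrm{FK}(k,q,d,r)$, with generating sequence $\mathbf{w}$, and fix an $r$-coloring $c$ of $\mathrm{B}(W)$. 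For each $B\in\mathrm{B}(W)$ with generating sequence $(s,F_0,\ldots,F_{\ell-1})$, observe that inside $W$ the top $s$ sits at some $W$-level $j_0$ and each $F_i$ fills the $W$-levels $\{j_i,\ldots,j_{i+1}-1\}$. I encode $B$ by the word $v_B\in\mathrm{W}(k,\mathcal{L})$ of length $j_\ell$ whose $j$-th coordinate is: the $[k]$-letter chosen by $s$ if $j<j_0$; the common $[k]$-value of $F_i$ at block $w_j$ if that block is non-varying for $F_i$; and the tagged tuple in $\mathcal{L}_{p_i,i}$ listing the distinct values of $F_i$ at $w_j$ otherwise. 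Set $\Phi(B)=\mathbf{w}(v_B)\in\mathbf{w}(k,\mathcal{L})$.

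The crucial observation is that $\Phi$ is injective and that, writing $\mathbf{t}(B)=(\tau_0,\ldots,\tau_{\ell-1})$, the type of $\Phi(B)$ in $\mathrm{W}(k,\mathcal{L})$ equals $\widetilde{\tau}_0\con\ldots\con\widetilde{\tau}_{\ell-1}$, where $\widetilde{\tau}_i$ denotes $\tau_i$ with every letter tagged by $i$. Indeed, within each $F_i$-portion the erase-then-shorten operation reproduces $\tau_i$ tagged by $i$, and because $\wedge F_i=\varnothing$ forces $v_B[j_i]\in\mathcal{L}_{p_i,i}$, the differing tags at the $F_i/F_{i+1}$ boundary prevent any unwanted merging. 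Consequently $\mathbf{t}(B)$ is unambiguously determined by the type of $\Phi(B)$.

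To conclude, define $\tilde c:\mathbf{w}(k,\mathcal{L})\to [r]$ by setting $\tilde c(\Phi(B))=c(B)$ on the image of $\Phi$ and arbitrarily elsewhere, and apply Theorem~\ref{t13} to obtain a Carlson--Simpson subsequence $\mathbf{v}$ of $\mathbf{w}$ of dimension $d$ such that each class $\mathbf{v}(k,\mathcal{L},\sigma)$ is $\tilde c$-monochromatic. Letting $U$ be the tree generated by $\mathbf{v}$, Fact~\ref{f31}(b) places $\Phi(B)\in\mathbf{v}(k,\mathcal{L})$ for every $B\in\mathrm{B}(U)$, and basic subsets of $U$ with the same type land in the same class; this yields the desired $U$ together with the estimate $\mathrm{Ram_B}(k,d,r)\mik\mathrm{FK}(k,q,d,r)$. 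Since $q$ is at most doubly exponential in $(k,d)$, hence $\mathcal{E}^4$-bounded, composition with the $\mathcal{E}^6$ bound of Theorem~\ref{t13} keeps $\chi$ in $\mathcal{E}^6$. The main obstacle will be the careful verification of the ``type of $\Phi(B)$'' formula: one must check precisely how the erase-then-shorten operation interacts with non-varying blocks interspersed inside an $F_i$-portion of $v_B$; the details are routine but slightly delicate, similar in spirit to those in the proof of Proposition~\ref{p43}.
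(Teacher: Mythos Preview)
Your proposal is correct and follows essentially the same route as the paper: introduce level-tagged copies $\mathcal{L}_{p,i}$ of the alphabets $\mathcal{L}_p$, encode each basic set as a word over $[k]\cup\mathcal{L}$ (your $\Phi(B)=\mathbf{w}(v_B)$ coincides with the paper's map $\mathbf{R}(B)=s^{\con}\mathrm{R}^0_{p_0}(F_0)^{\con}\cdots^{\con}\mathrm{R}^{\ell-1}_{p_{\ell-1}}(F_{\ell-1})$), and invoke Theorem~\ref{t13} to obtain $\mathrm{Ram_B}(k,d,r)\mik\mathrm{FK}(k,q,d,r)$ with the same $q=d\sum_{p=2}^{k^d}(k^p-k)$. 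The only slip is the appeal to Fact~\ref{f31}(b) for $\Phi(B)\in\mathbf{v}(k,\mathcal{L})$ when $B\in\mathrm{B}(U)$: that fact gives the inclusion $\mathbf{v}(k,\mathcal{L})\subseteq\mathbf{w}(k,\mathcal{L})$, whereas what you need is that the encoding is intrinsic to $B$ and can therefore be computed relative to $\mathbf{v}$ as well --- the paper isolates this as property~(P2).
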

We have already pointed out in Fact \ref{f25} that Carlson--Simpson trees preserve infima. Also notice that the type of a basic set does not
depend on its top. Taking into account these remarks, it is easy to verify that Proposition \ref{p43} follows from Theorem \ref{t52}. In fact,
Theorem \ref{t52} can be seen as the ``higher-dimensional" extension of Proposition \ref{p43}. 
\begin{proof}[Proof of Theorem \ref{t52}]
It is similar to the proof of Proposition \ref{p43}. In particular, our strategy is to represent all basic subsets of a given Carlson--Simpson
tree as words over an alphabet of the form $[k]\cup \mathcal{L}$ where $\mathcal{L}$ is an appropriately chosen finite set. Once this is done,
the result follows by a straightforward application of Theorem \ref{t14}. We proceed to the details.

Fix $k,d,r\in\nn$ with $k\meg 2$ and $d,r\meg 1$. We set
\begin{equation} \label{e54}
q=d\cdot\sum_{p=2}^{k^d}(k^p-k).
\end{equation}
By Theorem \ref{t14}, it is enough to prove that
\begin{equation} \label{e55}
\mathrm{Ram_B}(k,d,r)\mik \mathrm{FK}(k,q,d,r).
\end{equation}
To this end let $W$ be a Carlson--Simpson tree of $[k]^{<\nn}$ with $\dim(W)\meg \mathrm{FK}(k,q,d,r)$ and fix a coloring $c:\mathrm{B}(W)\to [r]$.
Denote by $\mathbf{w}$ the generating sequence of $W$.

For every $i\in\{0,...,d-1\}$ and every $p\in \{2,...,k^d\}$ let $\mathcal{L}^i_{p}=\mathcal{L}_p\times\{i\}$. Denote by $\mathrm{i}^i_p:
[k]\cup \mathcal{L}_p\to [k]\cup \mathcal{L}^i_p$ the natural bijection defined by $\mathrm{i}^i_p(a)=a$ for every $a\in [k]$ and
$\mathrm{i}^i_p(\lambda)=(\lambda,i)$ for every $\lambda\in \mathcal{L}_{p}$. The map $\mathrm{i}^i_p$ lifts to a bijection
\begin{equation} \label{e56}
\mathrm{I}^i_{p}: \mathrm{W}(k,\mathcal{L}_{p})\to \mathrm{W}(k,\mathcal{L}^i_p)
\end{equation}
defined by $\mathrm{I}^i_{p}\big((s_0,...,s_n)\big)=\big(\mathrm{i}^i_p(s_0),...,\mathrm{i}^i_p(s_n)\big)$. Also let
$\mathrm{R}_p: \mathrm{Fl}_p([k]^{<\nn})\to \mathrm{W}(k,\mathcal{L}_{p})$ be the ``representation" map defined in \S 4.3 and set
\begin{equation} \label{e57}
\mathrm{R}^i_{p}=\mathrm{I}^i_{p}\circ \mathrm{R}_{p}.
\end{equation}
By Fact \ref{f41}, the map $\mathrm{R}^i_{p}:\mathrm{Fl}_p([k]^{<\nn})\to \mathrm{W}(k,\mathcal{L}^i_{p})$ is an injection.

We define
\begin{equation} \label{e58}
\mathcal{L}=\bigcup_{i=0}^{d-1}\bigcup_{p=2}^{k^d}\mathcal{L}^i_{p}
\end{equation}
and we observe that
\begin{equation} \label{e59}
|\mathcal{L}|=\sum_{i=0}^{d-1}\sum_{p=2}^{k^d}|\mathcal{L}^i_{p}|=d\cdot\sum_{p=2}^{k^d}
(k^p-k)=q.
\end{equation}
Also notice that for every $i\in\{0,...,d-1\}$ and every $p\in\{2,...,k^d\}$ we have that $\mathrm{W}(k, \mathcal{L}^i_p)\subseteq
\mathrm{W}(k,\mathcal{L})$. Therefore, the map $\mathrm{R}^i_{p}$ can be seen as an injection from $\mathrm{Fl}_p([k]^{<\nn})$ into
$\mathrm{W}(k,\mathcal{L})$.

Now let $\mathcal{B}$ be the subset of $\mathrm{B}(W)$ defined by
\begin{equation} \label{e510}
B\in\mathcal{B} \Leftrightarrow \text{if } \mathbf{t}(B)=(\tau_0,...,\tau_{\ell-1}), \text{ then } \sum_{i=0}^{\ell-1}|\tau_i|\mik d.
\end{equation}
Observe that for every $B\in\mathcal{B}$ the width $w(B)$ of $B$ is at most $k^d$. We define a map
\begin{equation} \label{e511}
\mathrm{\mathbf{R}}:\mathcal{B} \to \mathrm{W}(k,\mathcal{L})
\end{equation}
by the rule
\begin{equation} \label{e512}
\mathrm{\mathbf{R}}(B)= s^\con \mathrm{R}^0_{p_0}( F_0)^\con\mathrm{R}^1_{p_1}( F_1)^\con...^\con \mathrm{R}^{\ell-1}_{p_{\ell-1}}(F_{\ell-1})
\end{equation}
where $(s, F_0,...,F_{\ell-1})$ is the generating sequence of $B$ and $p_i=|F_i|$ for every $i\in\{0,...,\ell-1\}$. The following properties
follow easily taking into account the relevant definitions.
\begin{enumerate}
\item[(P1)] The map $\mathrm{\mathbf{R}}$ is an injection into $\mathbf{w}(k,\mathcal{L})$.
\item[(P2)] Let $\ell\in\{1,...,d\}$. Also let $\mathbf{p}=(p_0,...,p_{\ell-1})$ and $\mathbf{t}=(\tau_0,...,\tau_{\ell-1})$ where
$p_i\in\{2,...,k^d\}$ and $\tau_i\in \mathcal{T}(\mathcal{L}_{p_i})$ for every $i\in\{0,...,\ell-1\}$. Finally let $V$ be a 
Carlson--Simpson subtree of $W$ generated by some $\mathbf{v}\in\mathrm{Subseq}_d(\mathbf{w})$. Assume that the set
$\mathrm{B}_{\ell,\mathbf{p},\mathbf{t}}(V)$ is nonempty and set
\begin{equation} \label{e513}
\tau=\mathrm{I}^0_{p_0}(\tau_0)^\con...^\con\mathrm{I}^{\ell-1}_{p_{\ell-1}}(\tau_{\ell-1}).
\end{equation}
Then $\tau\in\mathcal{T}(\mathcal{L})$ and $|\tau|=\sum_{i=0}^{\ell-1}|\tau_i|\mik \dim(V)= d$. Moreover, we have 
$\mathrm{B}_{\ell,\mathbf{p},\mathbf{t}}(V)\subseteq \mathcal{B}$ and the map
$\mathrm{\mathbf{R}}: \mathrm{B}_{\ell,\mathbf{p},\mathbf{t}}(V)\to \mathbf{v}(k,\mathcal{L},\tau)$ is a bijection. 
\end{enumerate}
Finally we define $\tilde{c}:\mathbf{w}(k,\mathcal{L})\to [r]$ by
\begin{equation} \label{e514}
\tilde{c}(w)=
\begin{cases}
c(B) & \text{if there exists } B\in\mathcal{B} \text{ with } \mathrm{\mathbf{R}}(B)=w, \\
r    & \text{otherwise.} \\
\end{cases}
\end{equation}
Notice that, by (P1), the coloring $\tilde{c}$ is well-defined. Since $\dim(\mathbf{w})\meg \mathrm{FK}(k,q,d,r)$, there exists
$\mathbf{v}\in\mathrm{Subseq}_d(\mathbf{w})$ such that the set $\mathbf{v}(k,\mathcal{L},\tau)$ is monochromatic with respect to $\tilde{c}$
for every $\tau\in\mathcal{T}(\mathcal{L})$ with $|\tau|\mik d$. Let $V$ be the $d$-dimensional Carlson--Simpson subtree of $W$ generated
by $\mathbf{v}$. Using property (P2), it is easily verified that $V$ is as desired. The proof is completed.
\end{proof}


\section{Embedding subsets of $[k]^{<\nn}$ into basic sets}

\numberwithin{equation}{section}

Our goal in this section is to complete the analysis outlined in \S 4.1. In particular, we shall prove the following embedding result.
\begin{prop} \label{p61}
Let $k\in\nn$ with $k\meg 2$. Also let $W$ be a Carlson--Simpson tree of $[k]^{<\nn}$ and $G\subseteq W$ with $|G|\meg 2$. Then there exists 
a basic set $B$ of dimension and width at most $|G|$, and such that $G\subseteq B\subseteq W$.
\end{prop}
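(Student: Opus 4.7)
The plan is to reduce to the case $W = [k]^{<m+1}$ via the canonical isomorphism $\mathrm{I}_W$ (where $m = \dim(W)$), and then to construct $B$ by analyzing $G$ level by level. First observe that $\mathrm{I}_W$ sends basic sets of $[k]^{<m+1}$ bijectively to basic sets of $W$, preserving dimension and width: this is a direct check using that each $w_j$ in the generating sequence of $W$ is a \emph{left} variable word, so that for distinct $a,a' \in [k]$ the words $w_j(a)$ and $w_j(a')$ already differ at the very first position, which ensures that the image of a flat set with empty infimum is again a flat set with empty infimum. Thus it suffices to prove the statement when $W = [k]^{<m+1}$.

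Assume then $W = [k]^{<m+1}$, fix $G \subseteq W$ with $|G| = n \meg 2$, and put $s = \wedge G$. Write each $g \in G$ uniquely as $g = s^{\con} u_g$, where $u_g = \varnothing$ if and only if $g = s \in G$. Enumerate the distinct nonzero values of $\{|u_g| : g \in G\}$ in increasing order as $l_1 < \cdots < l_r$ (so $1 \mik r \mik n$) and set $l_0 = 0$. For each $i \in \{0, \dots, r-1\}$ define
\[ F_i^0 = \big\{\text{subword of } u_g \text{ of length } l_{i+1}-l_i \text{ starting at position } l_i \,:\, g \in G \text{ and } |u_g|\meg l_{i+1}\big\}, \]
a subset of $[k]^{l_{i+1}-l_i}$. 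Then form $F_i \supseteq F_i^0$ by, if necessary, adjoining a single extra word of length $l_{i+1}-l_i$ whose first letter differs from the common first letter of $F_i^0$, in order to enforce both $|F_i|\meg 2$ and $\wedge F_i = \varnothing$.

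The key combinatorial check is that $|F_i| \mik n$ for every $i$. When $i = 0$ and $s \notin G$, every $g \in G$ contributes to $F_0^0$, and the first letters of the $u_g$'s cannot all coincide (otherwise $\wedge G$ would properly extend $s$), so $\wedge F_0^0 = \varnothing$ holds automatically and no augmentation is needed; hence $|F_0| \mik n$. In every remaining case where augmentation could be required, at least one element $g^* \in G$ is excluded from the definition of $F_i^0$: either $g^* = s$ (when $s \in G$), or $g^*$ satisfies $|u_{g^*}| = l_1 \mik l_i$ (when $i \meg 1$ and $s \notin G$). In either case $|F_i^0| \mik n - 1$, and therefore $|F_i| \mik n$.

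Finally, set $B = \{s\} \cup \bigcup_{i=0}^{r-1} s^{\con}F_0^{\con}\cdots^{\con}F_i$. Every $g \in G$ lies in $B$: if $g = s$ this is immediate, and if $|u_g| = l_j$ for some $j \meg 1$, then $u_g$ decomposes as the concatenation of its $j$ consecutive blocks of lengths $l_1-l_0, l_2-l_1, \dots, l_j-l_{j-1}$, each lying in the corresponding $F_i^0 \subseteq F_i$ for $i < j$. The containment $B \subseteq W$ holds since the deepest level of $B$ has length $|s| + l_r$, which equals the length of some element of $G \subseteq W$ and hence is at most $m$. Since $\dim(B) = r \mik n$ and the width $\max_i |F_i| \mik n$, both bounds equal $|G|$, completing the proof. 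The only subtle point is the case analysis yielding $|F_i^0| \mik n - 1$ whenever augmentation is actually required.
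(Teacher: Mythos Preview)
Your proof is correct and follows essentially the same approach as the paper's: reduce via the canonical isomorphism to $W=[k]^{<m+1}$, slice $G$ into its occupied levels, and at each level pad the set of relevant ``segments'' by at most one extra word to obtain a flat set with empty infimum, checking that the cardinality bound $|F_i|\mik |G|$ is preserved. Your bookkeeping differs only cosmetically---you work relative to $s=\wedge G$ and record the nonzero lengths $l_1<\cdots<l_r$ of the tails $u_g$, whereas the paper records the absolute levels $n_1<\cdots<n_\ell$ and treats the flat case separately---but the construction and the key counting argument are the same.
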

\begin{proof} 
We may assume that $W$ is of the form $[k]^{<d+1}$ for some integer $d\meg 1$. Observe that if $G$ is flat, then we can write $G$ as $s^\con F$
where $s=\wedge G$ and $F$ is a flat set whose infimum is the empty word. In this case, the set $B=\{s\}\cup (s^{\con}F)$ is the desired basic set.
Therefore, in what follows, we may additionally assume that $G$ is not flat.

First we set $L=\{n\in\{0,...,d\}: G\cap [k]^n\neq\varnothing \}$ and $\ell =|L|$. Notice that $2\mik \ell\mik |G|$.
We write the set $L$ in increasing order  as $\{n_1<...<n_{\ell}\}$. For every $i\in\{0,...,\ell-1\}$ let
\begin{equation} \label{e61}
G_{i}=G\cap [k]^{n_{i+1}}
\end{equation}
and 
\begin{equation} \label{e62}
H_i= G_i \cup \bigcup_{j=i+1}^{\ell-1} G_j|n_{i+1}
\end{equation}
where, as in \eqref{e23}, $G_j|n_{i+1}=\{w|n_{i+1}: w\in G_j\}$. Observe that 
\begin{equation} \label{e63}
|G_i|\mik |H_i|\mik \sum_{j=i}^{\ell-1}|G_i|
\end{equation}
for every $i\in\{0,...,\ell-1\}$. Hence,
\begin{enumerate}
\item[(P1)] $|H_0|\mik |G|$ and
\item[(P2)] $\max\{|H_i|:i\in \{1,...,\ell-1\}\}<|G|$.
\end{enumerate}
Let $i\in\{1,...,\ell-1\}$ be arbitrary. For every $w\in [k]^{n_{i+1}}$ denote by $w^*$ the unique element of $[k]^{n_{i+1}-n_{i}}$
such that $w=(w|n_{i})^\con w^*$. We set
\begin{equation} \label{e64}
H_i^*=\{w^*: w\in H_i\}\subseteq [k]^{n_{i+1}-n_{i}}.
\end{equation}
Notice that
\begin{equation} \label{e65}
|H^*_i|\mik |H_i|\stackrel{\text{(P2)}}{<} |G|
\end{equation}
and
\begin{equation} \label{e66}
G_i \subseteq H_i \subseteq H_0 ^\con {H^*_1}^\con...^\con {H_i^*}.
\end{equation}
Next we claim that for every $i\in\{1,...,\ell-1\}$ there exists a flat set $F_i$ such that
\begin{enumerate}
\item[(a)] the infimum $\wedge F_i$ of $F_i$ is the empty word,
\item[(b)] $H_i^*\subseteq F_i$ and
\item[(c)] $|F_i|\mik |H_i^*|+1$.
\end{enumerate}
Indeed, fix $i\in\{1,...,\ell-1\}$. If $\wedge H_i^*$ is the empty word, then we set $F_i=H_i^*$. Otherwise, the set $H_i^*|1$ is a singleton,
and so, we may select $a\in [k]$ with $a\neq H_i^*|1$. Let $\mathbf{a}$ be the unique word of length $n_{i+1}-n_i$ all of whose coordinates are
equal to $a$. We set $F_i=H_i^*\cup\{\mathbf{a}\}$. Clearly $F_i$ is as desired.

We are ready for the final step of the argument. Let $s=\wedge H_0$. Assume, first, that $\{s\}=H_0$. In this case we define
\begin{equation} \label{e67}
B=\{s\}\cup (s^{\con}F_1)\cup ... \cup (s{^\con}F_1^{\con}...^{\con}F_{\ell-1}).
\end{equation}
It is easy to check that $B$ is the desired basic set (in fact, in this case, the dimension of $B$ is at most $|G|-1$). Otherwise, write $H_0$
as $s^{\con}F_0$ where $F_0$ is a flat set whose infimum is the empty word. We define $B$ to be the basic set generated by the sequence
$(s,F_0,F_1,...,F_{\ell-1})$. It is also easily verified that $B$ satisfies all requirements. The proof is completed.
\end{proof}


\section{Correlation on basic sets: one-dimensional case}

\numberwithin{equation}{section}

\subsection{The main result}

We start by introducing some numerical invariants. Specifically for every $k,p\in\nn$ with $k,p\meg 2$ and every $0<\ee\mik 1$ let
\begin{equation} \label{e71}
q_p=q_p(k)=k^p-k,
\end{equation}
\begin{equation} \label{e72}
\Lambda_p=\Lambda_p(k,\ee)=\Lambda(k^{q_p}, \ee^2/4) \stackrel{\eqref{e217}}{=} \lceil 16\ee^{-2}\mathrm{DCS}(k^{q_p},1,\ee^2/16)\rceil
\end{equation}
and 
\begin{equation} \label{e73}
\eta_p(k,\ee)=\eta(k^{q_p},\ee^2/4) \stackrel{\eqref{e218}}{=}\frac{\ee^2}{8|\mathrm{Subtr}_1([k^{q_p}]^{<\Lambda_p})|}.
\end{equation}
Moreover, if $m$ is a positive integer, then we set
\begin{equation} \label{e74}
\mathrm{Cor}^*_{1}(k,m,\ee) = \mathrm{CS}\big(k,m\cdot\Lambda_{k^m}(k,\ee),1,2^k\big).
\end{equation}
We are ready to state the main result of this section.
\begin{prop} \label{p71}
Let $k,m\in \nn$ with $k\meg 2$ and $m\meg 1$, and $0<\ee\mik 1$. Also let $W$ be a Carlson--Simpson tree of $[k]^{<\nn}$ of dimension at least 
$\mathrm{Cor}^*_{1}(k,m,\ee)$. Then for every family $\{A_w: w\in W\}$ of measurable events in a probability space $(\Omega,\Sigma,\mu)$
satisfying $\mu(A_w)\meg \varepsilon$ for every $w\in W$, every $p\in\{2,...,k^m\}$ and every $\tau\in\mathcal{T}_{\mathrm{Fl}}[k,p]$
with $|\tau|\mik m$ there exists a one-dimensional basic set $B\subseteq W$ of width $p$ and type $\tau$ such that
\begin{equation} \label{e75}
\mu\Big(\bigcap_{w\in B}A_w\Big)\meg \eta_p(k,\varepsilon).
\end{equation}
\end{prop}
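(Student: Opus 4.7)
The plan is to reduce Proposition~\ref{p71} to Lemma~\ref{l28} applied inside a Carlson--Simpson tree over a super-alphabet of size $k^{q_p}$. The first step is a Ramsey reduction: since $\dim(W)\meg\mathrm{CS}(k,m\Lambda_{k^m}(k,\ee),1,2^k)$, we apply Lemma~\ref{l25} with $r=2^k$ to pass to a Carlson--Simpson subtree $V\subseteq W$ of dimension $m\Lambda_{k^m}(k,\ee)-1$ on which a suitable $2^k$-valued coloring of $W$ is constant. The purpose of the coloring is to extract, for each word $t\in W$, ``pair-correlation data'' about $A_t$ with its children $A_{t^\con(\ldots)}$ (whose non-triviality is ensured by the pair-correlation lemma~\ref{l26}); monochromaticity on $V$ ensures that this data is the same at every node of $V$.

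Next, fix $p\in\{2,\ldots,k^m\}$ and a type $\tau\in\mathcal{T}_{\mathrm{Fl}}[k,p]$ with $|\tau|\mik m$. By Fact~\ref{f41}, flat subsets of $V$ of cardinality $p$ correspond via the representation map $\mathrm{R}_p$ to words in $\mathrm{W}(k,\mathcal{L}_p)$, where $|\mathcal{L}_p|=q_p=k^p-k$. Exploiting this correspondence, we construct an abstract Carlson--Simpson tree $V^\sharp$ over the super-alphabet $[k^{q_p}]$ of dimension $\Lambda_p(k,\ee)=\Lambda(k^{q_p},\ee^2/4)$ whose nodes $u$ index candidate basic sets $B_u\subseteq V$ of width $p$ and type $\tau$, in such a way that (i) Carlson--Simpson lines of $V^\sharp$ correspond to a single shared basic set $B\subseteq V$, and (ii) the event $A^\sharp_u:=\bigcap_{w\in B_u}A_w$ satisfies $\mu(A^\sharp_u)\meg\ee^2/4$ uniformly in $u$. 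Property~(ii) is where the uniformity from the Ramsey reduction is used.

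Finally, Lemma~\ref{l28} applied to the family $\{A^\sharp_u\}$ over $V^\sharp$, with alphabet size $k^{q_p}$ and density threshold $\ee^2/4$ (for which the required depth is exactly $\Lambda_p(k,\ee)$), yields a Carlson--Simpson line $S^\sharp$ of $V^\sharp$ with
\[ \mu\Big(\bigcap_{u\in S^\sharp}A^\sharp_u\Big) \meg \eta(k^{q_p},\ee^2/4) = \eta_p(k,\ee). \]
By property~(i) of the encoding, the events $\{A^\sharp_u\}_{u\in S^\sharp}$ are all contained in $\bigcap_{w\in B}A_w$ for a common one-dimensional basic set $B\subseteq V$ of width $p$ and type $\tau$, witnessing the conclusion of the proposition.

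The main obstacle will be the explicit construction of the super-alphabet encoding $u\mapsto B_u$ in the second step. It must be compatible with the Carlson--Simpson structure of $V^\sharp$ so that lines of $V^\sharp$ collapse to a single shared basic set $B$ of $V$, while simultaneously being built from ``good continuations'' (in the sense of the Ramsey reduction) so that the uniform lower bound $\mu(A^\sharp_u)\meg\ee^2/4$ holds. The $2^k$-coloring in the first step is tailored precisely to make these two requirements compatible, and verifying this combinatorial alignment is the main technical content of the proof.
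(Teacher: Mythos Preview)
Your three-stage outline (Ramsey reduction with $2^k$ colors, super-alphabet encoding over $[k^{q_p}]$, then Lemma~\ref{l28}) is exactly the paper's route through Lemmas~\ref{l72}, \ref{l77} and \ref{l78}. Two points in your description, however, are off in ways that would block the argument as written.

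First, the Ramsey step cannot be carried out with Lemma~\ref{l25}, which colors \emph{nodes} of $W$. Your coloring records, for each $t$, which letters $a$ give good correlation between $A_t$ and some child of $t$; after passing to a monochromatic $V$ this tells you nothing about pairs $v,v'\in V$ with $v'$ a successor of $v$, because successors in $V$ are far from immediate children in $W$. What is needed (and what Lemma~\ref{l72} does) is to color $\mathrm{Subseq}_1(\mathbf{w})$ by $\mathcal{F}_a=\{(u,u_0):\mu(A_u\cap A_{u^{\con}u_0(a)})\meg\vartheta^2\}$ and apply Theorem~\ref{t23} directly; the output is a subtree $V$ on which \emph{every} successor pair satisfies $\mu(A_v\cap A_{v'})\meg\ee^2/4$.

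Second, in your encoding the nodes $u$ of $V^\sharp$ cannot index full basic sets $B_u$ of width $p$: such a $B_u$ has $p+1$ elements, and nothing in the Ramsey step bounds $\mu\big(\bigcap_{w\in B_u}A_w\big)$ from below by $\ee^2/4$. In the paper's encoding (Definition~\ref{d75}), a node $t\in([k]^{q_p})^{<\Lambda_p}$ indexes a single successor \emph{pair} $\big(\Phi^0_{p,\tau}(t),\Phi_{p,\tau}(t)\big)$ in $V$, so that $\tilde A_t=A_{\Phi^0_{p,\tau}(t)}\cap A_{\Phi_{p,\tau}(t)}$ inherits the bound $\ee^2/4$ from Lemma~\ref{l72}. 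The basic set emerges only after Lemma~\ref{l28} produces a Carlson--Simpson line $S$: Lemma~\ref{l77} shows that $\{\Phi^0_{p,\tau}(t)\}\cup\{\Phi_{p,\tau}(t^{\con}t_0(\mathbf{a})):\mathbf{a}\in[k]^{q_p}\}$ contains a one-dimensional basic set of width $p$ and type $\tau$. So your properties (i) and (ii) are correct in spirit, but the object attached to a node is a pair, not a basic set; the line-to-basic-set correspondence is the content of Lemma~\ref{l77}, and this is indeed where the combinatorial work lies.
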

We emphasize that the lower bound in \eqref{e75} is independent of the length of the type $\tau$. Also we remark that Proposition \ref{p71}
is the first (and crucial) step towards the proof of Theorem \ref{t12}. The proof of Proposition \ref{p71} will be given in \S 7.2. In \S 7.3
we isolate some of its consequences. 

\subsection{Proof of Proposition \ref{p71}}

First we introduce some terminology. For every pair $v,v'\in [k]^{<\nn}$ we say that $v'$ is a \textit{successor} of $v$ if $v$ is a proper
initial segment of $v'$. Notice, in particular, that if $v'$ is a successor of $v$, then $|v|<|v'|$. Moreover, it is easy to see that for every
Carlson--Simpson tree $V$ of $[k]^{<\nn}$ generated by the sequence $\mathbf{v}$ and every $v,v'\in V$ we have that $v'$ is a successor of $v$
if and only if there exist $(u,u_0)\in\mathrm{Subseq}_1(\mathbf{v})$ and $a\in [k]$ such that $v=u$ and $v'=u^\con u_0(a)$. We have the following lemma.
\begin{lem} \label{l72}
Let $k\in\nn$ with $k\meg 2$ and $0<\vartheta<\ee\mik 1$. Also let $N\in\nn$ with $N\meg (\ee^2-\vartheta^2)^{-1}$. Finally let 
$W$ be a Carlson--Simpson tree of $[k]^{<\nn}$ and assume that 
\begin{equation} \label{e76}
\dim(W)\meg \mathrm{CS}\big(k,N,1,2^k\big).
\end{equation} 
Then for every family $\{A_w: w\in W\}$ of measurable events in a probability space $(\Omega,\Sigma, \mu)$ satisfying $\mu(A_w)\meg\ee$
for every $w\in W$ there exists an $N$-dimensional Carlson--Simpson subtree $V$ of $W$ such that for every pair $v,v'\in V$ with $v'$ a successor
of $v$ we have
\begin{equation} \label{e77}
\mu(A_v\cap A_{v'})\meg \vartheta^2.
\end{equation}
\end{lem}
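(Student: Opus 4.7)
The plan is to reduce the problem to a partition calculus via Theorem~\ref{t23}, using Lemma~\ref{l26} to force the monochromatic color to exhaust all of $[k]$. Let $\mathbf{w}$ denote the generating sequence of $W$, so that $\dim(\mathbf{w}) = \dim(W) \meg \mathrm{CS}(k, N, 1, 2^k)$. Define a coloring
\[ c: \mathrm{Subseq}_1(\mathbf{w}) \to 2^{[k]} \]
by the rule
\[ c\big((u, u_0)\big) = \big\{ a \in [k] : \mu(A_u \cap A_{u^{\con} u_0(a)}) \meg \vartheta^2 \big\}. \]
Since the range has at most $2^k$ elements, Theorem~\ref{t23} supplies $\mathbf{v} = (v, v_0, \ldots, v_{N-1}) \in \mathrm{Subseq}_N(\mathbf{w})$ such that $\mathrm{Subseq}_1(\mathbf{v})$ is $c$-monochromatic of some color $S \subseteq [k]$; we let $V$ be the $N$-dimensional Carlson--Simpson subtree of $W$ generated by $\mathbf{v}$.

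The core step is to show that $S = [k]$. Suppose towards a contradiction that some $a^* \in [k] \setminus S$ exists, and set $u_0 := v$ and $u_{i+1} := u_i^{\con} v_i(a^*)$ for $0 \mik i < N$. The key observation is the closure of $\mathrm{Subseq}_1(\mathbf{v})$ under \emph{blocking}: for every pair $0 \mik i < j \mik N$ the concatenated left variable word $\tilde{v}_{i,j}$ determined by $\tilde{v}_{i,j}(x) = v_i(x)^{\con} v_{i+1}(x)^{\con} \cdots^{\con} v_{j-1}(x)$ satisfies $(u_i, \tilde{v}_{i,j}) \in \mathrm{Subseq}_1(\mathbf{v})$; this follows from the definition of a Carlson--Simpson subsequence by taking $n_0 = i$, $n_1 = j$ and the auxiliary sequence $a_\ell = a^*$ for $\ell < i$ together with $a_\ell = x$ for $i \mik \ell < j$. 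Since $c$ assigns $S$ to this subsequence while $a^* \notin S$ and $u_j = u_i^{\con} \tilde{v}_{i,j}(a^*)$, one obtains $\mu(A_{u_i} \cap A_{u_j}) < \vartheta^2$ for every pair $i < j$ in $\{0, 1, \ldots, N\}$. But the $N+1$ events $A_{u_0}, \ldots, A_{u_N}$ all have measure at least $\ee$, and $N \meg (\ee^2 - \vartheta^2)^{-1}$, so Lemma~\ref{l26} produces some pair whose intersection has measure at least $\vartheta^2$, a contradiction.

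Hence $S = [k]$. Any pair $v', v'' \in V$ with $v''$ a successor of $v'$ admits, by the equivalence recorded just before the lemma, a representation $v' = u$, $v'' = u^{\con} u_0(a)$ for some $(u, u_0) \in \mathrm{Subseq}_1(\mathbf{v})$ and some $a \in [k] = S$; the definition of $c$ then yields $\mu(A_{v'} \cap A_{v''}) \meg \vartheta^2$, so $V$ is the required subtree. The main obstacle is the blocking step: monochromaticity alone controls correlations only along \emph{consecutive} members of the chain $u_0, u_1, \ldots, u_N$, but Lemma~\ref{l26} needs a bound on \emph{all} pairs, and it is precisely the fact that $\mathrm{Subseq}_1(\mathbf{v})$ contains the blocked subsequences $(u_i, \tilde{v}_{i,j})$ that propagates the bound to non-adjacent pairs and enables the contradiction.
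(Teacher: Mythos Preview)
Your proof is correct and follows essentially the same approach as the paper's own argument: both define (implicitly or explicitly) the $2^k$-coloring of $\mathrm{Subseq}_1(\mathbf{w})$ recording for which letters $a\in[k]$ the correlation $\mu(A_u\cap A_{u^{\con}u_0(a)})\meg\vartheta^2$ holds, apply Theorem~\ref{t23} to extract an $N$-dimensional monochromatic subsequence $\mathbf{v}$, and then use the chain $v^{\con}v_0(a)^{\con}\cdots^{\con}v_{i-1}(a)$ together with Lemma~\ref{l26} and the blocking closure of $\mathrm{Subseq}_1(\mathbf{v})$ to force the color to be all of $[k]$. The only cosmetic differences are that the paper phrases the coloring via $k$ families $\mathcal{F}_a$ and argues directly rather than by contradiction, and uses $N$ events where you use $N+1$.
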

We notice that Lemma \ref{l72} is similar to \cite[Lemma 7.3]{DKT3}. Also we point out that in Proposition \ref{p101} we shall obtain
a significant extension of this result. However, the argument below can serve as an introduction to the proof of Proposition \ref{p101},
and as such, we decided to present it for the benefit of the reader.
\begin{proof}[Proof of Lemma \ref{l72}]
Let $\mathbf{w}$ be the generating sequence of $W$. For every $a\in [k]$ set
\begin{equation} \label{e78}
\mathcal{F}_a=\big\{ (u,u_0)\in \mathrm{Subseq}_1(\mathbf{w}) : \mu(A_u\cap A_{u^{\con}u_0(a)})\meg \vartheta^2\big\}.
\end{equation}
By Theorem \ref{t23} and the estimate in \eqref{e76}, there exists $\mathbf{v}\in\mathrm{Subseq}_N(\mathbf{w})$ such that for every $a\in [k]$
either $\mathrm{Subseq}_1(\mathbf{v})\subseteq \mathcal{F}_a$ or $\mathrm{Subseq}_1(\mathbf{v})\cap\mathcal{F}_a=\varnothing$. Therefore, it is
enough to show that $\mathrm{Subseq}_1(\mathbf{v})\cap\mathcal{F}_a\neq\varnothing$ for every $a\in [k]$. To this end fix $a\in[k]$ and write
$\mathbf{v}$ as $(v,v_0,...,v_{N-1})$. For every $i\in\{1,...,N\}$ we set
\begin{equation} \label{e79}
t_i=v^\con v_0(a)^\con...^\con v_{i-1}(a).
\end{equation}
Since $N\meg (\ee^2-\vartheta^2)^{-1}$, by Lemma \ref{l27}, there exist $i,j\in [N]$ with $i<j$ and such that
$\mu(A_{t_i}\cap A_{t_j})\meg \vartheta^2$. We set
\begin{equation} \label{e710}
u=v^\con v_0(a)^\con...^\con v_{i-1}(a) \ \text{ and } \ u_0=v_i^\con...^\con v_{j-1}.
\end{equation}
Observe that $t_i=u$ and $t_j=u^{\con}u_0(a)$. Hence $(u,u_0)\in\mathrm{Subseq}_1(\mathbf{v})\cap\mathcal{F}_a$ and the proof 
is completed.
\end{proof}
Notice that for every pair $q,n$ of positive integers the sets $([k]^q)^{n}$ and $([k]^{n})^q$ can be naturally identified. Therefore, we may
view a word over $[k]^q$ as a sequence of length $q$ having values in $[k]^n$ for some $n\in\nn$. This is, essentially, the content of the
following definition.
\begin{defn} \label{d43}
Let $k,q\in\nn$ with $k,q\meg 2$. For every positive integer $n$ we define the map
\begin{equation} \label{e711}
\mathrm{I}_{q,n}: ([k]^q)^{n}\to ([k]^{n})^q
\end{equation}
as follows. Fix \ $t=(\mathbf{a}_0,...,\mathbf{a}_{n-1})\in([k]^q)^{n}$ and for every $j\in\{0,...,n-1\}$ and every $i\in\{0,...,q-1\}$ let
$a_{i,j}$ be the $i$-th coordinate of $\mathbf{a}_j$. Next for every $i\in\{0,...,q-1\}$ let $\overline{t}_i=(a_{i,0},...,a_{i,n-1})$ and define
\begin{equation} \label{e712}
\mathrm{I}_{q,n}(t)=(\overline{t}_0,...,\overline{t}_{q-1}).
\end{equation}
Also let
\begin{equation} \label{e713}
\mathrm{I}_q:([k]^q)^{<\nn}\to \{\varnothing\} \cup \bigcup_{n=1}^\infty([k]^{n})^q
\end{equation}
be defined by $\mathrm{I}_q(\varnothing)=\varnothing$ and 
\begin{equation} \label{e714}
\mathrm{I}_q(t)=\mathrm{I}_{q,|t|}(t)
\end{equation}
for every  $t\in ([k]^q)^{<\nn}$ with $|t|\meg 1$.
\end{defn}
We will need the following elementary fact.
\begin{fact} \label{f74}
Let $k,q\in\nn$ with $k,q\meg 2$. Also let $s$ be a variable word over $[k]^q$. Then there exists a unique sequence $(s_0,...,s_{q-1})$
such that $s_i$ is a variable word over $k$ with $|s_i|=|s|$ for every $i\in\{0,...,q-1\}$ and satisfying
\begin{equation} \label{e715}
\mathrm{I}_{q}\big(s(\mathbf{a})\big)=\big(s_0(a_0),...,s_{q-1}(a_{q-1})\big)
\end{equation}
for every $\mathbf{a}=(a_0,...,a_{q-1})\in [k]^q$. Moreover, if $s$ is a left variable word, then so is $s_i$ for every $i\in\{0,...,q-1\}$.
\end{fact}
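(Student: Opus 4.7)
The plan is to construct the $s_i$ directly by ``projecting'' $s$ onto its $i$-th coordinate, and then verify that this choice satisfies the required identity and is the only one that can. Write $s=(c_0,\ldots,c_{n-1})$ where $n=|s|$, each $c_j$ lies in $[k]^q\cup\{x\}$, and the variable $x$ occurs at least once. For each $i\in\{0,\ldots,q-1\}$ and each $j\in\{0,\ldots,n-1\}$, I would declare the $j$-th letter of $s_i$ to be the $i$-th coordinate of $c_j$ when $c_j\in[k]^q$, and to be $x$ when $c_j=x$. By construction $s_i$ is a sequence of length $n$ over $[k]\cup\{x\}$, and the positions at which $x$ appears in $s_i$ coincide with the positions at which $x$ appears in $s$; in particular $s_i$ is a variable word over $k$ with $|s_i|=|s|$.

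To verify the substitution identity, I would fix $\mathbf{a}=(a_0,\ldots,a_{q-1})\in[k]^q$ and read off both sides coordinate by coordinate. The $j$-th letter of $s(\mathbf{a})$ is $c_j$ if $c_j\in[k]^q$ and is $\mathbf{a}$ if $c_j=x$. By the definition of $\mathrm{I}_q$ in \eqref{e712}, the $i$-th component of $\mathrm{I}_q(s(\mathbf{a}))$ is the length-$n$ word whose $j$-th letter equals the $i$-th coordinate of $c_j$ if $c_j\in[k]^q$, and equals $a_i$ if $c_j=x$. By the definition of $s_i$, this is exactly $s_i(a_i)$, which is what \eqref{e715} demands.

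For uniqueness, suppose $(s'_0,\ldots,s'_{q-1})$ is another sequence of variable words satisfying \eqref{e715}. Fix $i\in\{0,\ldots,q-1\}$ and $a\in[k]$, and pick any $\mathbf{a}\in[k]^q$ whose $i$-th coordinate equals $a$. Comparing the $i$-th components of $\mathrm{I}_q(s(\mathbf{a}))$ coming from the two representations gives $s_i(a)=s'_i(a)$. Since $a\in[k]$ was arbitrary, two distinct values of $a$ already force $s_i$ and $s'_i$ to have identical variable positions (these are exactly the positions at which the two instantiations differ) and identical non-variable letters, whence $s_i=s'_i$. Finally, if $s$ is a left variable word, then $c_0=x$, so by construction the leftmost letter of each $s_i$ is $x$, and each $s_i$ is itself a left variable word.

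There is no genuine obstacle here: the statement is a direct unpacking of the transposition map $\mathrm{I}_q$ from Definition \ref{d43}, and the only mildly delicate point is making sure the uniqueness argument uses two different values of the substituted letter rather than a single one, so as to detect the variable positions. I would present the construction and the verification of \eqref{e715} first, then uniqueness, then the left-variable clause as a one-line remark about the leftmost coordinate.
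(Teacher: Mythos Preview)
Your argument is correct. The paper states this result without proof (it is labelled a ``Fact'' and considered a straightforward consequence of Definition \ref{d43}), so there is no original proof to compare against; your coordinate-wise construction of the $s_i$ and the uniqueness argument via two distinct substitution values are exactly the natural way to fill in the omitted details.
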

We are about to introduce one more definition. To motivate the reader, let us first notice that every word over $[k]^q$ of length $n$ can be
represented as a word over $k$ of length $qn$, just by concatenating its coordinates. We will need, however, a rather different representation.
Its main properties are described in Lemma \ref{l77} below. Specifically, the representation is designed so that the image of every Carlson--Simpson
line of $([k]^q)^{<\nn}$ contains a basic set of $[k]^{<\nn}$ of a given type $\tau$. 
\begin{defn} \label{d75}
Let $k,p\in\nn$ with $k,p\meg 2$. Define $q_p$ and $\mathcal{L}_p$ as in \eqref{e71} and \eqref{e43} respectively. Notice that 
$|\mathcal{L}_p|=q_p$. Write the set $\mathcal{L}_p$ in lexicographical increasing order as
\begin{equation} \label{e716}
\mathcal{L}_p=\{\lambda_0<_{\mathrm{lex}} \cdots <_{\mathrm{lex}}\lambda_{q_p-1}\}.
\end{equation} 
Let $\tau\in\mathcal{T}(\mathcal{L}_p)$ and set $\ell=|\tau|$. Consider the unique sequence $(i_0,...,i_{\ell-1})$ in $\{0,...,q_p-1\}$
such that
\begin{equation} \label{e717}
\tau=(\lambda_{i_0},...,\lambda_{i_{\ell-1}}).
\end{equation}
We define two maps
\begin{equation} \label{e718}
\Phi_{p,\tau}:([k]^{q_p})^{<\nn}\to [k]^{<\nn} \ \text{ and } \ \Phi^0_{p,\tau}:([k]^{q_p})^{<\nn}\to [k]^{<\nn}
\end{equation}
as follows. First set $\Phi_{p,\tau}(\varnothing)=\Phi_{p,\tau}^0(\varnothing)=\varnothing$. Otherwise, let $t\in([k]^{q_p})^{<\nn}$
with $|t|\meg 1$ and write $\mathrm{I}_{q_p}(t)$ as $(\overline{t}_0,...,\overline{t}_{q_p-1})$. We set
\begin{equation} \label{e719}
\Phi_{p,\tau}(t)= (\overline{t}_{i_0})^\con...^\con(\overline{t}_{i_{\ell-1}}) \ \text{ and } \ 
\Phi^0_{p,\tau}(t)= \overline{t}_{i_0}.
\end{equation}
\end{defn}
We isolate, for future use, some properties of the maps $\Phi_{p,\tau}$ and $\Phi^0_{p,\tau}$. All of them are straightforward consequences 
of the relevant definition.
\begin{fact} \label{f76}
Let $k,p\in\nn$ with $k,p\meg 2$ and $\tau\in\mathcal{T}(\mathcal{L}_p)$. 
\begin{enumerate}
\item[(a)] For every $t\in ([k]^{q_p})^{<\nn}$ the word $\Phi^0_{p,\tau}(t)$ is an initial segment of $\Phi_{p,\tau}(t)$. Moreover,
if $|\tau|\meg 2$ and $|t|\meg 1$, then $\Phi_{p,\tau}(t)$ is a successor of $\Phi^0_{p,\tau}(t)$.
\item[(b)] For every $t\in ([k]^{q_p})^{<\nn}$ we have $|\Phi_{p,\tau}(t)|=|t|\cdot|\tau|$.
\item[(c)] For every $t,s\in([k]^{q_p})^{<\nn}$ with $|t|,|s|\meg 1$ if
$\Phi_{p,\tau}(t)=(\overline{t}_{i_0})^\con...^\con(\overline{t}_{i_{\ell-1}})$ and 
$\Phi_{p,\tau}(s)= (\overline{s}_{i_0})^\con...^\con(\overline{s}_{i_{\ell-1}})$, then 
\begin{equation} \label{e720}
\Phi_{p,\tau}(t^\con s)=(\overline{t}_{i_0})^\con(\overline{s}_{i_0})^\con ... ^\con(\overline{t}_{i_{\ell-1}})^\con
(\overline{s}_{i_{\ell-1}}).
\end{equation}
\end{enumerate}
\end{fact}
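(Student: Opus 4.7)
The plan is to verify each of the three clauses directly from Definition \ref{d75}, reading it together with Definition \ref{d43} of the map $\mathrm{I}_q$. Write $\mathrm{I}_{q_p}(t)=(\overline{t}_0,\ldots,\overline{t}_{q_p-1})$ throughout. For clause (a), simply compare the two formulas
\[
\Phi^0_{p,\tau}(t)=\overline{t}_{i_0}, \qquad \Phi_{p,\tau}(t)=(\overline{t}_{i_0})^\con\cdots^\con(\overline{t}_{i_{\ell-1}});
\]
the former is literally the initial segment of length $|\overline{t}_{i_0}|$ of the latter, so $\Phi^0_{p,\tau}(t)$ is an initial segment of $\Phi_{p,\tau}(t)$. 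The ``successor'' clause then reduces to checking a strict inequality of lengths, which I address in (b).

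For (b), by the definition of $\mathrm{I}_{q_p,n}$ with $n=|t|$, each of the $q_p$ components $\overline{t}_i$ lies in $[k]^n$ and hence has length exactly $|t|$. Since $\Phi_{p,\tau}(t)$ is the concatenation of precisely $\ell=|\tau|$ such components, its length equals $|t|\cdot|\tau|$. Combined with (a), this shows that whenever $|\tau|\meg 2$ and $|t|\meg 1$ we have $|\Phi_{p,\tau}(t)|=\ell|t|>|t|=|\Phi^0_{p,\tau}(t)|$, so $\Phi_{p,\tau}(t)$ strictly extends $\Phi^0_{p,\tau}(t)$, completing (a).

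The only clause that requires an actual (but still tiny) calculation is (c), and the key observation is that the map $\mathrm{I}_{q_p}$ is coordinatewise-concatenation preserving: if $\mathrm{I}_{q_p}(t)=(\overline{t}_0,\ldots,\overline{t}_{q_p-1})$ and $\mathrm{I}_{q_p}(s)=(\overline{s}_0,\ldots,\overline{s}_{q_p-1})$, then
\[
\mathrm{I}_{q_p}(t^\con s)=\bigl(\overline{t}_0{}^{\con}\overline{s}_0,\ \ldots,\ \overline{t}_{q_p-1}{}^{\con}\overline{s}_{q_p-1}\bigr).
\]
This is immediate from the defining formula of $\mathrm{I}_{q_p,n}$: the $i$-th coordinate of the image is obtained by reading the $i$-th coordinate of each successive letter of the input, so reading this off for $t^\con s$ simply appends the $i$-th coordinate string coming from $s$ to that coming from $t$. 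Substituting this into the definition of $\Phi_{p,\tau}$ yields formula \eqref{e720} on the nose.

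I anticipate no genuine obstacle: the whole statement is bookkeeping about two very concrete maps, and the only nontrivial ingredient is the coordinatewise-concatenation property of $\mathrm{I}_{q_p}$ used in (c). Once that is written down, (a), (b), and (c) all fall out in a single pass.
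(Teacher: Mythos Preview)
Your proposal is correct and matches the paper's treatment: the paper does not give an explicit proof but simply states that ``all of them are straightforward consequences of the relevant definition,'' and your direct unpacking of Definitions \ref{d43} and \ref{d75}---in particular the coordinatewise-concatenation property of $\mathrm{I}_{q_p}$ for clause (c)---is exactly the intended verification.
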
 
We proceed with the following lemma.
\begin{lem} \label{l77}
Let $k,p\in\nn$ with $k,p\meg 2$ and $\tau\in\mathcal{T}_{\mathrm{Fl}}[k,p]$. Also let $q_p$ be as in \eqref{e71}.
\begin{enumerate}
\item[(a)] Let $s$ be a variable word over $[k]^{q_p}$ and set $n=|s|$. Then the set 
\begin{equation} \label{e721}
\big\{\Phi_{p,\tau}\big(s(\mathbf{a})\big): \mathbf{a}\in [k]^{q_p}\big\}
\end{equation} contains a flat set $F\subseteq [k]^{n\cdot |\tau|}$ of cardinality $p$ and type $\tau$.
\item[(b)] Let $t$ be a word over $[k]^{q_p}$ and $s$ be a left variable word over $[k]^{q_p}$ and set $n=|t|+|s|$. Then the set
\begin{equation} \label{e722}
\{\Phi^0_{p,\tau}(t)\}\cup\big\{\Phi_{p,\tau}\big(t^\con s(\mathbf{a})\big): \mathbf{a}\in [k]^{q_p}\big\}
\end{equation} 
contains a basic set $B\subseteq [k]^{<n\cdot |\tau|+1}$ of dimension $1$, width $p$ and type $\tau$. Moreover, the top of $B$ is the word $\Phi^0_{p,\tau}(t)$.
\end{enumerate}
\end{lem}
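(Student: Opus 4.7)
The plan for both parts is an explicit construction based on a single canonical choice of $p$ substitutions. For each $r\in\{0,\ldots,p-1\}$ let $\mathbf{a}^{(r)}\in[k]^{q_p}$ be the vector whose $i$-th coordinate is $\lambda_i(r)$, the $r$-th entry of the column $\lambda_i\in[k]^p$; equivalently the $p$ vectors $\mathbf{a}^{(0)},\ldots,\mathbf{a}^{(p-1)}$ are the rows of the $p\times q_p$ matrix whose columns are the elements of $\mathcal{L}_p$ listed in the lex-increasing order fixed in Definition \ref{d75}.

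For part (a), Fact \ref{f74} decomposes $s$ as a tuple $(s_0,\ldots,s_{q_p-1})$ of variable words over $k$, each of length $n$ and with the same variable positions as $s$. Unfolding Definition \ref{d75} yields
\begin{equation*}
t_r := \Phi_{p,\tau}\big(s(\mathbf{a}^{(r)})\big) = s_{i_0}(\lambda_{i_0}(r))^\con\cdots{}^\con s_{i_{\ell-1}}(\lambda_{i_{\ell-1}}(r)),
\end{equation*}
where $\ell=|\tau|$ and $(i_0,\ldots,i_{\ell-1})$ is the index sequence of $\tau$. Each $t_r$ has length $n\ell$, so $F:=\{t_r:r=0,\ldots,p-1\}\subseteq[k]^{n\ell}$; it remains to check that $|F|=p$, that the labeling respects the lex order, and that the type of $F$ equals $\tau$. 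The key observation is that the column of $F$ at position $kn+m$ is either constant (when $s_{i_k}$ is non-variable at position $m$) or equal to $\lambda_{i_k}$ (when $s_{i_k}$ has a variable at position $m$). Therefore the lex comparison between $t_r$ and $t_{r'}$ reduces to the lex comparison of the abstract rows $(\lambda_{i_0}(r),\ldots,\lambda_{i_{\ell-1}}(r))$ and $(\lambda_{i_0}(r'),\ldots,\lambda_{i_{\ell-1}}(r'))$. Since $\tau\in\mathcal{T}_{\mathrm{Fl}}[k,p]$ there exists a flat set of type $\tau$, whose elements, being lex-ordered, force these abstract rows to be distinct and lex-increasing in $r$; hence $t_0<_{\mathrm{lex}}\cdots<_{\mathrm{lex}}t_{p-1}$ and $|F|=p$. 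Reading off $\mathrm{R}_p(F)$, each $s_{i_k}$-block contributes consecutive repeats of $\lambda_{i_k}$ (collapsing to one), and adjacent blocks contribute distinct letters since $\tau$ has no consecutive repeats, so the type of $F$ is precisely $\tau$.

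For part (b), Fact \ref{f76}(c) gives
\begin{equation*}
\Phi_{p,\tau}\big(t^\con s(\mathbf{a}^{(r)})\big)=(\overline{t}_{i_0})^\con u_r,
\end{equation*}
where
\begin{equation*}
u_r:= s_{i_0}(\lambda_{i_0}(r))^\con (\overline{t}_{i_1})^\con s_{i_1}(\lambda_{i_1}(r))^\con\cdots{}^\con (\overline{t}_{i_{\ell-1}})^\con s_{i_{\ell-1}}(\lambda_{i_{\ell-1}}(r)).
\end{equation*}
Thus the set in \eqref{e722} contains $B:=\{\overline{t}_{i_0}\}\cup(\overline{t}_{i_0})^\con F'$ with $F':=\{u_r:r=0,\ldots,p-1\}$. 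The verification that $F'$ is a flat set of cardinality $p$ and type $\tau$ mirrors the one in (a); the additional $(\overline{t}_{i_k})$-blocks merely contribute constant columns that affect neither the lex order nor the type. Since $s$ is a left variable word, Fact \ref{f74} implies $s_{i_0}$ is also left variable, so the first letter of each $u_r$ is $\lambda_{i_0}(r)$; these $p$ values exhaust a non-constant column of $\mathcal{L}_p$, which forces $\wedge F'$ to be the empty word. Finally the top of $B$ is $\overline{t}_{i_0}=\Phi^0_{p,\tau}(t)$ by Definition \ref{d75}, and $B\subseteq[k]^{<n\cdot|\tau|+1}$ because the longest element $(\overline{t}_{i_0})^\con u_r$ has length $|t|+(\ell n-|t|)=n\ell$.

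The main obstacle is the lex-order argument: one has to check that the chosen substitutions $\mathbf{a}^{(r)}$ really do produce $p$ words whose lex order matches the labeling $r=0,1,\ldots,p-1$, and this is precisely where the hypothesis $\tau\in\mathcal{T}_{\mathrm{Fl}}[k,p]$ enters. Once the lex order is confirmed, computing the type and checking the auxiliary properties in part (b) reduces to a direct unraveling of Definition \ref{d75} and the definition of $\mathrm{R}_p$ in Section 4.3.
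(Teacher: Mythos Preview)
Your proof is correct and follows essentially the same construction as the paper: the substitutions $\mathbf{a}^{(r)}$ you choose coincide with the paper's $\mathbf{b}_m$ (the rows of the matrix whose columns list $\mathcal{L}_p$), and the verification via Fact~\ref{f74} and Definition~\ref{d75} is identical. The one place where you add content is the lex-order argument---the paper simply asserts that $|F|=p$ and that the type is $\tau$, implicitly taking for granted that the listing $m=0,\ldots,p-1$ already gives the lex order, whereas you correctly observe that this is exactly what the hypothesis $\tau\in\mathcal{T}_{\mathrm{Fl}}[k,p]$ guarantees. One small remark: your invocation of Fact~\ref{f76}(c) in part~(b) needs $|t|\meg 1$; the case $t=\varnothing$ should be handled separately (as the paper does), though it is strictly easier since then $\Phi^0_{p,\tau}(t)=\varnothing$ and the formulas collapse to those of part~(a).
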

\begin{proof}
First we need to do some preparatory work. As in Definition \ref{d75}, write the set $\mathcal{L}_p$ in lexicographical increasing order
as $\{\lambda_0<_{\mathrm{lex}}...<_{\mathrm{lex}}\lambda_{q_p-1}\}$. Also let $\ell=|\tau|$ and $(i_0,...,i_{\ell-1})$
be the unique sequence in $\{0,...,q_p-1\}$ such that $\tau=(\lambda_{i_0},...,\lambda_{i_{\ell-1}})$. 
For every $i\in\{0,...,q_p-1\}$ and every $m\in\{0,...,p-1\}$ let $l_{i,m}\in [k]$ be the $m$-th coordinate of $\lambda_i$.
We set
\begin{equation} \label{e723}
\mathbf{b}_m=(l_{0,m},...,l_{q_p-1,m})\in [k]^{q_p}
\end{equation}
for every $m\in\{0,...,p-1\}$.
\medskip

\noindent (a) Let
\begin{equation} \label{e724}
F=\big\{\Phi_{p,\tau}\big(s(\mathbf{b}_m)\big): 0\mik m\mik p-1\big\}.
\end{equation}
We claim that $F$ is as desired. Indeed, by Fact \ref{f76}, we have that $F\subseteq [k]^{n\cdot|\tau|}$. Hence $F$ is flat. It is easily
seen that the cardinality of $F$ is $p$. Therefore, it is enough to show that $F$ is of type $\tau$. To this end let $(s_0,...,s_{q_p-1})$
be the sequence of variable words over $k$ obtained by Fact \ref{f74} for the variable word $s$. By \eqref{e715} and \eqref{e719}, we see that
\begin{equation} \label{e725}
\Phi_{p,\tau}\big(s(\mathbf{b}_m)\big)=s_{i_0}(l_{i_0,m})^\con ...^\con s_{i_{\ell-1}}(l_{i_{\ell-1},m})
\end{equation}
for every $m\in\{0,...,p-1\}$. Noticing that $\lambda_{i_j}=\left\langle l_{i_j,m}: m\in\{0,...,p-1\}\right\rangle$ for every
$j\in\{0,...,\ell-1\}$, by \eqref{e725}, we conclude that $F$ has type $(\lambda_{i_0},...,\lambda_{i_{\ell-1}})=\tau$. 
\medskip

\noindent (b) We will give the proof under the additional assumption that $|t|\meg 1$. If $t$ is the empty word, then the proof 
is similar (in fact, it is simpler). First let
\begin{equation} \label{e726}
\mathrm{I}_{q_p}(t)=(\overline{t}_0,...,\overline{t}_{q_p-1})
\end{equation}
and $(s_0,...,s_{q_p-1})$ be the sequence of variable words over $k$ obtained by Fact \ref{f74} for the left variable word $s$. Notice,
in particular, that $s_i$ is a left variable word for every $i\in\{0,...,q_p-1\}$. Also observe that $\Phi^0_{p,\tau}(t)= \overline{t}_{i_0}$.
Moreover, by Fact \ref{f76} and \eqref{e726}, we have 
\begin{equation} \label{e727}
\Phi_{p,\tau}\big(t^\con s(\mathbf{b}_m)\big)=(\overline{t}_{i_0})^\con s_{i_0}(l_{i_0,m})^\con ...^\con
(\overline{t}_{i_{\ell-1}})^\con s_{i_{\ell-1}}(l_{i_{\ell-1},m})
\end{equation}
for every $m\in\{0,...,\ell-1\}$. We set
\begin{equation} \label{e728}
F_0=\big\{ s_{i_0}(l_{i_0,m})^\con(\overline{t}_{i_1})^\con ... ^\con(\overline{t}_{i_{\ell-1}})^\con s_{i_{\ell-1}}(l_{i_{\ell-1},m}): 
0\mik m\mik p-1\big\}.
\end{equation}
Arguing as in the first part of the proof, we see that $F_0$ is a flat set which is contained in $[k]^{n\cdot |\tau|-|t|}$ and is of cardinality
$p$ and type $\tau$. Since $(s_0,...,s_{q_p-1})$ consists of left variable words, it is easy to verify that $\wedge F_0$ is the empty word. 
Therefore, setting 
\begin{equation} \label{e729}
B=\{\overline{t}_{i_0}\} \cup \big( ({\overline{t}_{i_0}})^\con F_0\big),
\end{equation}
we see that $B$ is as desired. The proof is thus completed. 
\end{proof}
The following lemma is the final step towards the proof of Proposition \ref{p71}.
\begin{lem} \label{l78}
Let $k,p,\ell\in\nn$ with $k,p\meg 2$ and $\ell\meg 1$. Also let $0<\ee\mik 1$ and define $\Lambda_p=\Lambda_p(k,\ee)$ as in \eqref{e72}.
Finally let $W$ be a Carlson--Simpson tree of $[k]^{<\nn}$ and assume that 
\begin{equation} \label{e730}
\dim(W)\meg\mathrm{CS}\big(k,\ell\cdot \Lambda_p,1,2^k\big).
\end{equation}
Then for every family $\{A_w: w\in W\}$ of measurable events in a probability space $(\Omega,\Sigma,\mu)$ satisfying $\mu(A_w)\meg\ee$ for
every $w\in W$ and every $\tau\in \mathcal{T}_{\mathrm{Fl}}[k,p]$ with $|\tau|=\ell$ there exists an one-dimensional basic set $B\subseteq W$
of width $p$ and type $\tau$ such that
\begin{equation} \label{e731}
\mu \Big(\bigcap_{w\in B}A_w \Big)\meg \eta_p(k,\varepsilon)
\end{equation}
where $\eta_p(k,\varepsilon)$ is as in \eqref{e73}.
\end{lem}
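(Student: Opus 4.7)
The plan is to combine Lemma \ref{l72} (for boosting density into pairwise correlation along successor pairs), the encoding maps $\Phi_{p,\tau}, \Phi^0_{p,\tau}$ of Definition \ref{d75} together with Lemma \ref{l77}(b) (for recovering a basic set of type $\tau$ from a single Carlson--Simpson line over the enlarged alphabet $[k]^{q_p}$), and Lemma \ref{l28} applied with $k$ replaced by $k^{q_p}$ and $\delta=\ee^2/4$; the parameters $\Lambda_p$ and $\eta_p(k,\ee)$ have been defined precisely so that this last step applies.

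First I apply Lemma \ref{l72} with $\vartheta=\ee/2$. Since $\ell\Lambda_p\meg\Lambda_p\meg 16\ee^{-2}\meg(\ee^2-\ee^2/4)^{-1}$ and $\dim(W)\meg\mathrm{CS}(k,\ell\Lambda_p,1,2^k)$, this produces a Carlson--Simpson subtree $V$ of $W$ of dimension $\ell\Lambda_p$ such that $\mu(A_v\cap A_{v'})\meg\ee^2/4$ for every $v,v'\in V$ with $v'$ a successor of $v$. Let $\mathrm{I}_V:[k]^{<\ell\Lambda_p+1}\to V$ denote the canonical isomorphism.

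Next, for each $t\in([k]^{q_p})^{<\Lambda_p}$, define
\[ A'_t:=A_{\mathrm{I}_V(\Phi^0_{p,\tau}(t))}\cap A_{\mathrm{I}_V(\Phi_{p,\tau}(t))}. \]
By Fact \ref{f76}, $|\Phi^0_{p,\tau}(t)|=|t|\mik\Lambda_p$ and $|\Phi_{p,\tau}(t)|=|t|\ell\mik\ell\Lambda_p$, so both words lie in the domain of $\mathrm{I}_V$, and $\Phi^0_{p,\tau}(t)$ is an initial segment of $\Phi_{p,\tau}(t)$, a proper one as soon as $|t|\meg 1$ and $\ell\meg 2$. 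In that case $\mathrm{I}_V(\Phi_{p,\tau}(t))$ is a successor of $\mathrm{I}_V(\Phi^0_{p,\tau}(t))$ in $V$, so $\mu(A'_t)\meg\ee^2/4$ by the choice of $V$; in the remaining cases ($t=\varnothing$ or $\ell=1$) the two events coincide and $\mu(A'_t)\meg\ee\meg\ee^2/4$. Applying Lemma \ref{l28} with $k$ replaced by $k^{q_p}$ and $\delta=\ee^2/4$ therefore yields a Carlson--Simpson line $S\subseteq([k]^{q_p})^{<\Lambda_p}$, generated by some pair $(t,s)$ with $s$ a left variable word over $[k]^{q_p}$, satisfying $\mu\big(\bigcap_{t'\in S}A'_{t'}\big)\meg\eta_p(k,\ee)$.

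By Lemma \ref{l77}(b) applied to this pair $(t,s)$, the set $\{\Phi^0_{p,\tau}(t)\}\cup\{\Phi_{p,\tau}(t^\con s(\mathbf{a})):\mathbf{a}\in[k]^{q_p}\}$ contains a basic set $B_0\subseteq[k]^{<\nn}$ of dimension $1$, width $p$ and type $\tau$. Setting $B:=\mathrm{I}_V(B_0)\subseteq V\subseteq W$, every $A_w$ with $w\in B$ appears among the events in the displayed intersection, so $\bigcap_{t'\in S}A'_{t'}\subseteq\bigcap_{w\in B}A_w$ delivers the desired lower bound. The one delicate verification is that $\mathrm{I}_V$ transports $B_0$ to a basic set of the \emph{same} dimension, width and type inside $V$: preservation of dimension and infima follows from Fact \ref{f24}, while preservation of type (and hence of width) relies on the elementary observation that, since each generating left variable word $w_i$ of $V$ begins with the variable $x$, the leading letter of $w_i(a)$ is $a$. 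This forces lexicographic order within each level of $V$ to mirror that of $[k]^{<\ell\Lambda_p+1}$, and the column-by-column procedure defining the type of a flat set in \S 4.3 transfers unchanged after constant columns are erased and runs shortened.
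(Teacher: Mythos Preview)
Your proof is correct and follows essentially the same approach as the paper's: apply Lemma~\ref{l72} with $\vartheta=\ee/2$ to pass to a subtree $V$ with the successor-pair correlation, define the auxiliary events $\tilde{A}_t$ via $\Phi^0_{p,\tau}$ and $\Phi_{p,\tau}$, apply Lemma~\ref{l28} over $[k^{q_p}]^{<\Lambda_p}$, and then invoke Lemma~\ref{l77}(b) to extract the desired basic set. Your treatment is in fact slightly more careful than the paper's in two places---the edge cases $t=\varnothing$ or $\ell=1$, and the verification that $\mathrm{I}_V$ preserves the type of a flat set (the paper handles the latter implicitly by identifying $V$ with $[k]^{<\ell\Lambda_p+1}$, relying on Fact~\ref{f42}(a)).
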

\begin{proof} 
Notice first that, by Lemma \ref{l72} applied for ``$\vartheta=\ee/2$" and ``$d=\ell\cdot\Lambda_p$", there exists a Carlson--Simpson subtree
$V$ of $W$ with $\dim(V)=\ell\cdot \Lambda_p$ such that
\begin{equation} \label{e732}
\mu(A_v\cap A_{v'})\meg \ee^2/4
\end{equation}
for every $v,v'\in V$ with $v'$ successor of $v$. 

Next observe that there is a natural bijection between the alphabets $[k]^{q_p}$ and $[k^{q_p}]$. This bijection extends, of course, 
to a bijection between $([k]^{q_p})^{<\nn}$ and $[k^{q_p}]^{<\nn}$. Keeping in mind these remarks, in what follows we will identify
words (respectively, left variable words) over $[k]^{q_p}$ with words (respectively, left variable words) over $[k^{q_p}]$. We will
also identify $V$ with $[k]^{<\ell\cdot\Lambda_p+1}$ via the canonical isomorphism $\mathrm{I}_V$.

Fix $\tau\in \mathcal{T}_{\mathrm{Fl}}[k,p]$ with $|\tau|=\ell$. Let $q_p$ be as in \eqref{e71}. For every $t\in ([k]^{q_p})^{<\Lambda_p}$ set
\begin{equation} \label{e733}
\tilde{A}_t=A_{\Phi^0_{p,\tau}(t)}\cap A_{\Phi_{p,\tau}(t)}
\end{equation}
where $\Phi_{p,\tau}$ and $\Phi^0_{p,\tau}$ are as in Definition \ref{d75}. By Fact \ref{f76}, we have
\begin{equation} \label{e734}
\Phi^0_{p,\tau}\big(([k]^{q_p})^{<\Lambda_p}\big)\subseteq [k]^{<\ell\cdot \Lambda_p+1} \ \text{ and } \
\Phi_{p,\tau}\big(([k]^{q_p})^{<\Lambda_p}\big)\subseteq [k]^{<\ell\cdot \Lambda_p+1}.
\end{equation}
Hence $\tilde{A}_t$ is well-defined. Invoking Fact \ref{f76} once again and the estimate in \eqref{e732}, we see that
$\mu(\tilde{A}_t)\meg \varepsilon^2/4$ for every $t\in ([k]^{q_p})^{<\Lambda_p}$. By the choice of $\Lambda_p$ in \eqref{e72} and Lemma
\ref{l29} applied for ``$k=k^{q_p}$" and ``$\delta=\ee^2/4$", there exist a word $t$ over $[k]^{q_p}$ and a left variable word $t_0$
over $[k]^{q_p}$ such that, setting
\begin{equation} \label{e735}
S=\{t\}\cup \big\{t^{\con}t_0(\mathbf{a}): \mathbf{a}\in [k]^{q_p}\big\},
\end{equation}
we have 
\begin{equation} \label{e736}
\mu \Big(\bigcap_{t\in S}\tilde{A}_t\Big)\meg \eta(k^{q_p},\ee^2/4)\stackrel{\eqref{e73}}{=}\eta_p(k,\epsilon).
\end{equation}
Notice that
\begin{align} \label{e737}
\bigcap_{t\in S}\tilde{A}_t & = A_{\Phi^0_{p,\tau}(t)}\cap A_{\Phi_{p,\tau}(t)}\cap \bigcap_{\mathbf{a}\in [k]^{q_p}}
A_{\Phi^0_{p,\tau}\big(t^\con t_0(\mathbf{a})\big)}\cap A_{\Phi_{p,\tau}\big(t^\con t_0(\mathbf{a})\big)} \\
& \subseteq A_{\Phi^0_{p,\tau}(t)}\cap \bigcap_{\mathbf{a}\in [k]^{q_p}}A_{\Phi_{p,\tau}\big(t^\con t_0(\mathbf{a})\big)} \notag.
\end{align}
By Lemma \ref{l77}, there exists a one-dimensional basic set $B$ of width $p$ and type $\tau$ such that
\begin{equation} \label{e738}
B\subseteq \big\{\Phi^0_{p,\tau}(t)\big\} \cup \big\{\Phi_{p,\tau}\big( t^\con t_0(\mathbf{a})\big): \mathbf{a}\in [k]^{q_p}\big\}.
\end{equation}
By \eqref{e736}, \eqref{e737} and \eqref{e738}, we conclude that
\begin{equation} \label{e739}
\mu \Big(\bigcap_{w\in B}A_w\Big)\meg \eta_p(k,\varepsilon)
\end{equation}
and the proof is completed. 
\end{proof}
We are ready to complete the proof of Proposition \ref{p71}.
\begin{proof}[Proof of Proposition \ref{p71}]
Let $\{A_w: w\in W\}$ be a family of measurable events in a probability space $(\Omega,\Sigma,\mu)$ satisfying $\mu(A_w)\meg \ee$
for every $w\in W$. Also let $p\in \{2,...,k^m\}$ and $\tau\in\mathcal{T}_{\mathrm{Fl}}[k,p]$ with $|\tau|=\ell\mik m$. By
\eqref{e72}, we see that $\Lambda_p(k,\ee)\mik \Lambda_{k^m}(k,\ee)$ and so
\begin{equation} \label{e740}
\dim(W)\meg \mathrm{Cor}^*_1(k,m,\ee) \stackrel{\eqref{e74}}{\meg}\mathrm{CS}\big(k,\ell\cdot\Lambda_{p},1,2^k\big).
\end{equation}
Thus, invoking Lemma \ref{l78}, the result follows.
\end{proof}

\subsection{Consequences}

We will need the following consequence of Proposition \ref{p71}.
\begin{cor} \label{c79}
Let $k\in\nn$ with $k\meg 2$ and $0<\varrho\mik 1$. Also let $m,\ell\in\nn$ with $1\mik \ell<m$ and $V$ be a Carlson--Simpson tree
of $[k]^{<\nn}$ with
\begin{equation} \label{e741}
\dim(V)\meg\mathrm{Cor}^*_{1}(k,m,\varrho)+m.
\end{equation}
Finally let $\mathbf{p}=(p_0,...,p_{\ell-1})$ and $\mathbf{t}=(\tau_0,...,\tau_{\ell-1})$ where $p_i\in\nn$ with $p_i\meg 2$ and 
$\tau_i\in\mathcal{T}_{\mathrm{Fl}}[k,p_i]$ for every $i\in\{0,...,\ell-1\}$, and assume that $\sum_{i=0}^{\ell-1}|\tau_i|\mik m$.
Assume, moreover, that $\{A_v:v\in V\}$ is a family of measurable events in a probability space $(\Omega,\Sigma,\mu)$ such that
for every $B\in\mathrm{B}_{\ell,\mathbf{p},\mathbf{t}}(V)$ we have
\begin{equation} \label{e742}
\mu\Big( \bigcap_{v\in B} A_v\Big) \meg \varrho.
\end{equation} 
Then for every $p\in \{2,...,k^m\}$ and every $\tau\in\mathcal{T}_{\mathrm{Fl}}[k,p]$ with $|\tau|\mik m$ there exists an $(\ell+1)$-dimensional
basic set $B'\subseteq V$ with $\mathbf{p}(B')=(p, p_0,...,p_{\ell-1})$ and $\mathbf{t}(B')=(\tau,\tau_0,...,\tau_{\ell-1})$, and such that
\begin{equation} \label{e743}
\mu\Big( \bigcap_{v\in B'} A_v\Big) \meg \eta_p(k,\varrho)
\end{equation}
where $\eta_p(k,\varrho)$ is as in \eqref{e73}.
\end{cor}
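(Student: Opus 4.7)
The plan is to apply Proposition~\ref{p71} to a Carlson--Simpson subtree $W\subseteq V$ of dimension exactly $D_0:=\mathrm{Cor}^*_{1}(k,m,\varrho)$, after aggregating the hypothesis~\eqref{e742} into auxiliary events indexed by the nodes of $W$. Concretely, I would split the generating sequence $(v,v_0,\ldots,v_{D-1})$ of $V$, with $D:=\dim(V)$, so that $W$ is generated by $(v,v_0,\ldots,v_{D_0-1})$; the remaining tail of length $D-D_0\meg m$ will supply the existing flats $F_0^*,\ldots,F_{\ell-1}^*$ uniformly below every node of $W$, while the new flat $F'$ of type $\tau$ will be extracted inside $W$ by Proposition~\ref{p71}.

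First fix a template basic set $B^*\subseteq [k]^{<m+1}$ whose generating sequence is $(\varnothing,F_0^*,\ldots,F_{\ell-1}^*)$, with widths $(p_0,\ldots,p_{\ell-1})$ and types $(\tau_0,\ldots,\tau_{\ell-1})$. Such a template exists: since $\tau_i\in\mathcal{T}_{\mathrm{Fl}}[k,p_i]$, pick any realization of $\tau_i$ and successively discard all constant columns as well as all but one column from each run of identical consecutive non-constant columns. Both operations preserve cardinality and lexicographic order (discarded columns carry no new distinguishing information), and they drive the word representation exactly down to $\tau_i$, yielding a flat set $F_i^*\subseteq [k]^{|\tau_i|}$ of cardinality $p_i$, type $\tau_i$, with $\wedge F_i^*=\varnothing$ (the first column is the non-constant $\lambda\in\mathcal{L}_{p_i}$ that begins $\tau_i$); since $\sum_i|\tau_i|\mik m$, the set $B^*$ lies in $[k]^{<m+1}$. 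Next, identify $V$ with $[k]^{<D+1}$ and $W$ with $[k]^{<D_0+1}$ via canonical isomorphisms; extending Fact~\ref{f24} and using that each $v_j$ is a \emph{left} variable word (so its expansion preserves the lexicographic order of rows and keeps non-constant columns non-constant), these identifications preserve flatness, empty infimum, widths, and types, and so carry basic sets to basic sets with identical invariants. For each $u\in W$, set $B_u:=u^\con B^*$: since $|u|+m\mik D_0+m\mik D$, we have $B_u\subseteq V$ and $B_u\in\mathrm{B}_{\ell,\mathbf{p},\mathbf{t}}(V)$, so by hypothesis~\eqref{e742} the event $A'_u:=\bigcap_{v\in B_u}A_v$ satisfies $\mu(A'_u)\meg\varrho$.

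Now apply Proposition~\ref{p71} to $W$ and the family $\{A'_u:u\in W\}$ with the prescribed $p$ and $\tau$: it yields a one-dimensional basic set $B^{\mathrm{new}}=\{\sigma\}\cup(\sigma^\con F')\subseteq W$ of width $p$ and type $\tau$ with $\mu\bigl(\bigcap_{u\in B^{\mathrm{new}}}A'_u\bigr)\meg\eta_p(k,\varrho)$. Glue everything together by defining
\[
B':=\{\sigma\}\cup(\sigma^\con F')\cup\bigcup_{i=0}^{\ell-1}\bigl(\sigma\con F'\con F_0^*\con\cdots\con F_i^*\bigr),
\]
which is precisely the basic set of $V$ generated by $(\sigma,F',F_0^*,\ldots,F_{\ell-1}^*)$ and therefore has dimension $\ell+1$, widths $(p,p_0,\ldots,p_{\ell-1})$, and types $(\tau,\tau_0,\ldots,\tau_{\ell-1})$. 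Every element of $B'$ belongs to $u^\con B^*$ for $u=\sigma$ or some $u\in\sigma^\con F'$, so $B'\subseteq\bigcup_{u\in B^{\mathrm{new}}}B_u$ and hence
\[
\mu\Bigl(\bigcap_{v\in B'}A_v\Bigr)\meg \mu\Bigl(\bigcap_{u\in B^{\mathrm{new}}}A'_u\Bigr)\meg\eta_p(k,\varrho),
\]
as required. The main technical obstacle is the bookkeeping around canonical isomorphisms: specifically, the preservation of widths and types of flat (and hence basic) sets is not explicitly isolated in Section~4 and requires a careful check of how left variable words distribute $\mathcal{L}_{p_i}$-valued columns once rows are lex-sorted.
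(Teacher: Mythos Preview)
Your proof is correct and follows essentially the same route as the paper: identify $V$ with $[k]^{<D+1}$, fix a template $\ell$-dimensional basic set $B^*\subseteq[k]^{<m+1}$ with empty top and the prescribed type, define auxiliary events $A'_u=\bigcap_{v\in u^\con B^*}A_v$ for $u\in[k]^{<D_0+1}$, apply Proposition~\ref{p71} to these events, and then glue the resulting one-dimensional basic set on top of $B^*$. Your concern about type preservation under the canonical isomorphism is already handled by Fact~\ref{f42}(a), which states precisely that $\mathrm{Fl}_{p,\tau}(V)=\mathrm{I}_V(\mathrm{Fl}_{p,\tau}([k]^{<m+1}))$; combined with Fact~\ref{f24}(b) on infima this immediately extends to basic sets, so no separate ``careful check'' is needed.
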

\begin{proof}
Set $d=\dim(V)$ and identify $V$ with $[k]^{<d+1}$ via the canonical isomorphism $\mathrm{I}_{V}$. Also fix a basic set $B_1$ of
$[k]^{<m+1}$ with top the empty word, $\dim(B_1)=\ell$, $\mathbf{p}(B_1)=\mathbf{p}$ and $\mathbf{t}(B_1)=\mathbf{t}$. We set
\begin{equation} \label{e744}
d_0=\mathrm{Cor}^*_{1}(k,m,\varrho)
\end{equation}
and for every $s\in [k]^{<d_0+1}$ let
\begin{equation} \label{e745}
B_s=s^\con B_1 \ \text{ and } \ \tilde{A}_s=\bigcap_{v\in B_s} A_v.
\end{equation}
Let $s\in [k]^{<d_0+1}$ be arbitrary. Since $d\meg d_0+m$ we see that $B_s$ is a basic set of $[k]^{<d+1}$ with top the word $s$,
$\dim(B_s)=\ell$, $\mathbf{p}(B_s)=\mathbf{p}$ and $\mathbf{t}(B_s)=\mathbf{t}$. In particular, we have that 
$B_s\in\mathrm{B}_{\ell,\mathbf{p},\mathbf{t}}([k]^{<d+1})$. Hence, by our assumptions, we conclude that 
\begin{equation} \label{e746}
\mu(\tilde{A}_s)\meg \varrho.
\end{equation}
Now let $p\in \{2,...,k^m\}$ and $\tau\in\mathcal{T}_{f}[k,p]$ with $|\tau|\mik m$. By \eqref{e744} and \eqref{e746}, we may apply
Proposition \ref{p71} for the family $\{\tilde{A}_s: s\in [k]^{<d_0+1}\}$, the integer $p$ and the type $\tau$. Therefore, there exists
an one-dimensional basic set $B_0\subseteq [k]^{<d_0+1}$ of width $p$ and type $\tau$, and such that
\begin{equation} \label{e747}
\mu\Big( \bigcap_{s\in B_0}\tilde{A}_s\Big) \meg \eta_p(k,\varrho).
\end{equation}
Let $(s_0,F_0)$ be the generating sequence of $B_0$ and define
\begin{equation} \label{e748}
B'=\{s_0\}\cup (s_0 ^\con F_0) \cup (s^\con F_0^\con B_1).
\end{equation}
Since the top of $B_1$ is the empty word, we see that $B'$ is an $(\ell+1)$-dimensional basic set of $[k]^{<d+1}$
with $\mathbf{p}(B')=(p,p_0,...,p_{\ell-1})$ and $\mathbf{t}(B')=(\tau,\tau_0,...,\tau_{\ell-1})$. Also notice that
$B'\subseteq B_0^\con B_1$. Therefore,
\begin{equation} \label{e749}
\bigcap_{s\in B_0} \tilde{A}_s \stackrel{\eqref{e745}}{=}\bigcap_{v\in B_0^\con B_1} A_v \subseteq \bigcap_{v\in B'} A_v
\end{equation}
and so
\begin{equation} \label{e750}
\mu\Big( \bigcap_{v\in B'} A_v\Big) \meg \mu\Big( \bigcap_{s\in B_0}\tilde{A}_s\Big)
\stackrel{\eqref{e747}}{\meg}\eta_p(k,\varrho).
\end{equation}
The proof is completed.
\end{proof}


\section{Correlation on basic sets: higher-dimensional case}

\numberwithin{equation}{section}

Our goal in this section is to obtain a higher-dimensional extension of Proposition \ref{p71}. This extension is the final step of the proof
of Theorem \ref{t12}. To state it we need to introduce some numerical invariants. 

Let $k\in\nn$ with $k\meg 2$ and $0<\ee\mik 1$. Also let $\mathbf{p}=(p_0,...,p_{\ell-1})$
be a nonempty finite sequence in $\nn$ with $p_i\meg 2$ for every $i\in\{0,...,\ell-1\}$. We define a sequence $(\eta_i)_{i=0}^{\ell-1}$ of
positive reals recursively by the rule
\begin{equation} \label{e81}
\begin{cases}  \eta_0=\eta_{p_{\ell-1}}(k,\ee), \\ \eta_{i+1}=\eta_{p_{\ell-i-2}}(k,\eta_i) \end{cases}
\end{equation}
where $\eta_{p_{\ell-1}}(k,\ee)$ and $\eta_{p_{\ell-i-2}}(k,\eta_i)$ are as in \eqref{e73}. We set
\begin{equation} \label{e82}
\eta_{\mathbf{p}}(k,\ee)=\eta_{\ell-1}.
\end{equation}
Also let $m\in\nn$ with $m\meg 1$ and define
\begin{equation} \label{e83}
\boldsymbol{m}(k)=(\underbrace{k^m,..., k^m}_{m\mathrm{-times}})
\end{equation}
and 
\begin{equation} \label{e84}
\mathrm{Cor}(k,m,\varepsilon) = \mathrm{Ram}_{\mathrm{B}}\Big(k,\mathrm{Cor}^*_{1}\big(k,m,\eta_{\boldsymbol{m}(\!k)}(k,\ee)\big)+m,2\Big)
\end{equation}
where $\mathrm{Cor}^*_{1}\big(k,m,\eta_{\boldsymbol{m}(\!k)}(k,\ee)\big)$ is as in \eqref{e74}. We have the following proposition.
\begin{prop} \label{p81}
Let $k,m\in \nn$ with $k\meg 2$ and $m\meg 1$, and $0<\ee\mik 1$. Also let $W$ be a Carlson--Simpson tree of $[k]^{<\nn}$ of dimension at least 
$\mathrm{Cor}(k,m,\ee)$. Then for every family $\{A_w: w\in W\}$ of measurable events in a probability space $(\Omega,\Sigma,\mu)$ satisfying
$\mu(A_w)\meg \varepsilon$ for every $w\in W$ there exists a Carlson--Simpson subtree $U$ of $W$ with $\dim(U)=m$ such that for every basic
set $B$ of $U$ we have 
\begin{equation} \label{e85}
\mu \Big(\bigcap_{w\in B} A_w \Big)\meg \eta_{\mathbf{p}(B)}(k,\varepsilon).
\end{equation}
\end{prop}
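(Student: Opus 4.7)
The plan is to combine the Ramsey monochromaticity of basic sets (Theorem \ref{t52}) with an iterated application of the dimension-lifting Corollary \ref{c79}, so that every type profile that can appear in an $m$-dimensional subtree is forced to satisfy the desired measure bound. Define a $2$-coloring $c:\mathrm{B}(W)\to\{0,1\}$ by letting $c(B)=1$ exactly when $\mu\big(\bigcap_{w\in B}A_w\big)\meg \eta_{\mathbf{p}(B)}(k,\ee)$, and $c(B)=0$ otherwise. Setting $D=\mathrm{Cor}^*_{1}(k,m,\eta_{\mathbf{m}(k)}(k,\ee))+m$, the choice of $\mathrm{Cor}(k,m,\ee)$ in \eqref{e84} together with Theorem \ref{t52} supply a Carlson--Simpson subtree $W'\subseteq W$ of dimension $D$ in which any two basic sets sharing a common type receive the same color. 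Once we show that every admissible type profile $(\mathbf{p},\mathbf{t})$---that is, of dimension $\ell\mik m$, with $p_i\mik k^m$, $\tau_i\in\mathcal{T}_{\mathrm{Fl}}[k,p_i]$ and $\sum_{i}|\tau_i|\mik m$---is colored $1$ in $W'$, any $U\in\mathrm{Subtr}_m(W')$ will satisfy \eqref{e85}.

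I would prove this by induction on $\ell\in\{1,\ldots,m\}$. The base case $\ell=1$ is exactly Proposition \ref{p71} applied to $W'$: since $D\meg \mathrm{Cor}^*_{1}(k,m,\ee)$, for each admissible pair $(p,\tau)$ we obtain a $1$-dimensional basic set $B\subseteq W'$ of width $p$ and type $\tau$ with $\mu\big(\bigcap_{w\in B}A_w\big)\meg \eta_p(k,\ee)=\eta_{(p)}(k,\ee)$, hence $c(B)=1$, and Theorem \ref{t52} propagates color $1$ to every basic set of that profile in $W'$. For the inductive step, assume the claim for some $1\mik\ell<m$ and fix an admissible $(\ell+1)$-dimensional profile, written in the form $((p,p_0,\ldots,p_{\ell-1}),(\tau,\tau_0,\ldots,\tau_{\ell-1}))$. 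By the induction hypothesis every $B\in \mathrm{B}_{\ell,(p_0,\ldots,p_{\ell-1}),(\tau_0,\ldots,\tau_{\ell-1})}(W')$ satisfies
\[
\mu\Big(\bigcap_{v\in B}A_v\Big)\meg \eta_{(p_0,\ldots,p_{\ell-1})}(k,\ee)=:\varrho.
\]
Since $D\meg \mathrm{Cor}^*_{1}(k,m,\varrho)+m$, Corollary \ref{c79} produces an $(\ell+1)$-dimensional basic set $B'\subseteq W'$ of the desired profile with $\mu\big(\bigcap_{v\in B'}A_v\big)\meg \eta_p(k,\varrho)$; by the recursion \eqref{e81} this lower bound equals $\eta_{(p,p_0,\ldots,p_{\ell-1})}(k,\ee)$, so $c(B')=1$, and Theorem \ref{t52} again extends color $1$ to every basic set of that profile in $W'$, closing the induction.

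The one delicate bookkeeping point is ensuring that the single dimension $D$ dominates $\mathrm{Cor}^*_{1}(k,m,\varrho)+m$ for every value of $\varrho=\eta_{\mathbf{p}}(k,\ee)$ that appears in the induction, not merely for the pivotal value $\eta_{\mathbf{m}(k)}(k,\ee)$ fixed in \eqref{e84}. This reduces, via \eqref{e72}, \eqref{e73} and the recursion \eqref{e81}, to two monotonicity facts: $\eta_p(k,\cdot)$ is nondecreasing in its second argument, and $\eta_p(k,\delta)\mik \eta_{p'}(k,\delta)$ whenever $p'\mik p$. Granted these, nesting $\eta$ over the ``maximal'' sequence $\mathbf{m}(k)=(k^m,\ldots,k^m)$ produces the pointwise smallest value of $\eta_{\mathbf{p}}$ among all admissible profiles, so $\eta_{\mathbf{m}(k)}(k,\ee)\mik \varrho$ throughout the induction, and the antitonicity of $\mathrm{Cor}^*_{1}$ in its third argument delivers the required dimension estimate for $W'$. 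This matching of the recursion for $\eta_{\mathbf{p}}$ with the single application of $\eta_p$ provided by Corollary \ref{c79} is the heart of why the invariants in \eqref{e81}--\eqref{e84} were set up as they were.
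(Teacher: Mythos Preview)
Your proof is correct and follows essentially the same approach as the paper: apply Theorem~\ref{t52} to pass to a subtree of dimension $D=\mathrm{Cor}^*_{1}\big(k,m,\eta_{\mathbf{m}(k)}(k,\ee)\big)+m$ on which the ``good'' coloring is type-invariant, then induct on the dimension $\ell$ of the basic set using Proposition~\ref{p71} for the base case and Corollary~\ref{c79} for the step, with the monotonicity $\eta_{\mathbf{m}(k)}(k,\ee)\mik\eta_{\mathbf{p}'}(k,\ee)$ ensuring that the single choice of $D$ suffices throughout. Your explicit identification of the two monotonicity facts behind this last inequality is a useful addition; the paper simply asserts the inequality without comment.
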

\begin{proof}
We fix a Carlson--Simpson tree $W$ of $[k]^{<\nn}$ of dimension at least $\mathrm{Cor}(k,m,\ee)$ and a family $\{A_w:w\in W\}$ of measurable
events in a probability space $(\Omega,\Sigma,\mu)$ satisfying $\mu(A_w)\meg\ee$ for every $w\in W$. We set
\begin{equation} \label{e86}
\mathcal{G}=\Big\{ B\in \mathrm{B}(W): \mu\Big(\bigcap_{w\in B} A_w \Big) \meg \eta_{\mathbf{p}(B)}(k,\varepsilon)\Big\}.
\end{equation}
By Theorem \ref{t52} and \eqref{e84}, there exists a Carlson--Simpson subtree $V$ of $W$ with 
\begin{equation} \label{e87}
\dim(V)=\mathrm{Cor}^*_{1}\big(k,m,\eta_{\boldsymbol{m}(\!k)}(k,\ee)\big)+m
\end{equation}
and satisfying the following property. For every pair $B_1, B_2$ of basic sets of $V$ with the same type we have that $B_1\in\mathcal{G}$
if and only if $B_2\in\mathcal{G}$.
\begin{claim} \label{c82}
Let $\ell\in [m]$. Also let $\mathbf{p}=(p_0,...,p_{\ell-1})$ and $\mathbf{t}=(\tau_0,...,\tau_{\ell-1})$ where $p_i\in\nn$ with $p_i\meg 2$
and $\tau_i\in\mathcal{T}_{\mathrm{Fl}}[k,p_i]$ for every $i\in\{0,...,\ell-1\}$. Assume that $\sum_{i=0}^{\ell-1}|\tau_i|\mik m$. Then we
have that $\mathrm{B}_{\ell,\mathbf{p},\mathbf{t}}(V)\subseteq\mathcal{G}$.
\end{claim}
Notice that, by Claim \ref{c82}, any $m$-dimensional Carlson--Simpson subtree $U$ of $V$ satisfies the requirements of the proposition.
Therefore, the proof will be completed once we prove Claim \ref{c82}.

To this end we will proceed by induction on $\ell$. The case ``$\ell=1$" follows immediately form Proposition \ref{p71} since
\begin{equation} \label{e88}
\dim(V)\meg \mathrm{Cor}^*_{1}\big(k,m,\eta_{\boldsymbol{m}(\!k)}(k,\ee)\big) \meg \mathrm{Cor}^*_1(k,m,\ee).
\end{equation}
So let $\ell\in [m-1]$ and assume that the claim has been proved up to $\ell$. We fix two sequences $\mathbf{p}=(p_0,...,p_\ell)$
and $\mathbf{t}=(\tau_0,...,\tau_\ell)$ where $p_i\in\nn$ with $p_i\meg 2$ and $\tau_i\in\mathcal{T}_{\mathrm{Fl}}[k,p_i]$ for every
$i\in\{0,...,\ell\}$, and such that $\sum_{i=0}^{\ell}|\tau_i|\mik m$. We set
\begin{equation} \label{e89}
\mathbf{p}'=(p_1,..., p_\ell) \ \text{ and } \ \mathbf{t}'=(\tau_1,...,\tau_\ell).
\end{equation}
By our inductive assumptions, we see that $\mathrm{B}_{\ell,\mathbf{p}',\mathbf{t}'}(V)\subseteq \mathcal{G}$. This is equivalent
to saying that
\begin{equation} \label{e810}
\mu\Big( \bigcap_{w\in B'} A_w\Big) \meg \eta_{\mathbf{p}'}(k,\varepsilon)
\end{equation}
for every $B'\in\mathrm{B}_{\ell,\mathbf{p}',\mathbf{t}'}(V)$. Observe that $\eta_{\boldsymbol{m}(\!k)}(k,\ee)\mik \eta_{\mathbf{p}'}(k,\ee)$.
Hence, by \eqref{e87},  
\begin{equation} \label{e811}
\dim(V) \meg \mathrm{Cor}^*_1\big(k,m,\eta_{\mathbf{p}'}(k,\ee)\big)+m.
\end{equation}
By \eqref{e810} and \eqref{e811}, we may apply Corollary \ref{c79} for ``$\varrho=\eta_{\mathbf{p}'}(k,\varepsilon)$", ``$p=p_0$" and
``$\tau=\tau_0$". In particular, there exists a basic set $B$ of $V$ with $\mathbf{p}(B)=(p_0,p_1,...,p_\ell)$ and
$\mathbf{t}(B)=(\tau_0,\tau_1,...,\tau_{\ell})$, and such that
\begin{equation} \label{e812}
\mu \Big( \bigcap_{w\in B} A_w \Big)\meg \eta_{p_0}\big(k,\eta_{\mathbf{p}'}(k,\ee)\big)
\stackrel{\eqref{e82}} {=}\eta_{\mathbf{p}}(k,\varepsilon)=\eta_{\mathbf{p}(B)}(k,\varepsilon).
\end{equation}
It follows that $B\in \mathrm{B}_{\ell+1,\mathbf{p},\mathbf{t}}(V)\cap \mathcal{G}$ and so $\mathrm{B}_{\ell+1,\mathbf{p},\mathbf{t}}(V)
\subseteq \mathcal{G}$ by the choice of $V$. This completes the proof of Claim \ref{c82} and, as we have already indicated, the entire proof
is completed. 
\end{proof}


\section{Proofs of Theorem \ref{t12} and Corollary \ref{c13}}

\numberwithin{equation}{section}

We start with the proof of Theorem \ref{t12}.

\begin{proof}[Proof of Theorem \ref{t12}]
Let $k\in\nn$ with $k\meg 2$ and $0<\ee\mik 1$. We set 
\begin{equation} \label{e91}
\theta(k,\ee,1)=\ee.
\end{equation}
If $n\in\nn$ with $n\meg 2$, then let
\begin{equation} \label{e92}
\boldsymbol{n}=(\underbrace{n,...,n}_{n\mathrm{-times}})
\end{equation}
and define 
\begin{equation} \label{e93}
\theta(k,\ee,n)=\eta_{\boldsymbol{n}}(k,\ee)
\end{equation}
where $\eta_{\boldsymbol{n}}(k,\ee)$ is as in \eqref{e82}. Finally, for every positive integer $m$ let $\mathrm{Cor}(k,m,\ee)$ be as in
\eqref{e84}. We claim that with these choices the result follows. 

Indeed, fix a Carlson--Simpson tree $T$ of $[k]^{<\nn}$ of dimension at least $\mathrm{Cor}(k,m,\ee)$ and a family $\{A_t:t\in T\}$
of measurable events in a probability space $(\Omega,\Sigma,\mu)$ satisfying $\mu(A_t)\meg\ee$ for every $t\in T$. By Proposition
\ref{p81}, there exists a Carlson--Simpson subtree $S$ of $T$ of dimension $m$ such that
\begin{equation} \label{e94}
\mu \Big( \bigcap_{t\in B} A_t\Big) \meg \eta_{\mathbf{p}(B)}(k,\varepsilon)
\end{equation}
for every basic set $B$ of $S$. We will show that $S$ is as desired. To this end let $F$ be a nonempty subset of $S$ and set $n=|F|$.
If $n=1$, then the estimate in \eqref{e13} is automatically satisfied by the choice of $\theta(k,\ee,1)$ in \eqref{e91} and our 
assumptions. Otherwise, by Proposition \ref{p61}, there exists a basic set $B$ of width and dimension at most $n$ and such that
$F\subseteq B\subseteq S$. In particular, if $\mathbf{p}(B)=(p_0,...,p_{\ell-1})$, then $\ell\in [n]$ and $p_i\in \{2,...,n\}$ for
every $i\in \{0,...,\ell-1\}$. This is easily seen to imply that $\eta_{\mathbf{p}(B)}(k,\varepsilon)\meg \eta_{\boldsymbol{n}}(k,\ee)$.
Therefore,
\begin{equation} \label{e95}
\mu \Big( \bigcap_{t\in F} A_t\Big) \meg \mu \Big( \bigcap_{t\in B} A_t\Big) \stackrel{\eqref{e94}}{\meg} \eta_{\mathbf{p}(B)}(k,\varepsilon)
\meg \eta_{\boldsymbol{n}}(k,\ee) \stackrel{\eqref{e93}}{=} \theta(k,\ee,|F|).
\end{equation}
The proof of Theorem \ref{t12} is thus completed. 
\end{proof}
We proceed to the proof of Corollary \ref{c13}.
\begin{proof}[Proof of Corollary \ref{c13}]
As we have already indicated in the introduction, we will reduce the proof to Theorem \ref{t12}. We will argue as in \cite[Proposition 11.13]{DKT3}. 
Specifically, fix an integer $N\meg \mathrm{Cor}(k,\ee,m)$ and a family $\{A_w:w\in [k]^N\}$ of measurable events in a probability
space $(\Omega,\Sigma,\mu)$ satisfying $\mu(A_w)\meg \ee$ for every $w\in [k]^N$. For every $i\in \{0,...,N\}$ we fix a word $c_i\in [k]^{N-i}$,
and for every $t\in [k]^i$ let $\tilde{t}=t^{\con}c_i\in [k]^N$ and define
\begin{equation} \label{e96}
\tilde{A}_t=A_{\tilde{t}}.
\end{equation}
By Theorem \ref{t12}, there exists an $m$-dimensional Carlson--Simpson subtree $S$ of $[k]^{<N+1}$ such that for every nonempty
$F\subseteq S$ we have 
\begin{equation} \label{e97}
\mu\Big( \bigcap_{t\in F} \tilde{A}_t\Big) \meg \theta(k,\ee,|F|).
\end{equation}
We set $V=\{\tilde{t}: t\in S(m)\}$. Clearly $V$ is as desired.
\end{proof} 


\section{Estimating the constant $\theta(k,\ee,2)$}

\numberwithin{equation}{section}

This section is devoted to the proof of the following result.
\begin{prop} \label{p101}
Let $k,m\in\nn$ with $k\meg 2$ and $m\meg 1$, and $0<\vartheta<\ee\mik 1$. Then there exists a positive integer 
$\mathrm{Cor}_{2}(k, m, \vartheta,\varepsilon)$ with the following property. If $W$ is a Carlson--Simpson tree of $[k]^{<\nn}$ of
dimension at least $\mathrm{Cor}_{2}(k, m, \vartheta,\varepsilon)$ and $\{A_w:w\in W\}$ is a family of measurable events in a probability
space $(\Omega,\Sigma, \mu)$ satisfying $\mu(A_w)\meg \varepsilon$ for every $w\in W$, then there exists a Carlson--Simpson subtree $U$ of
$W$ with $\dim(U)=m$ and such that for every $u,u'\in U$ we have
\begin{equation} \label{e101}
\mu(A_u\cap A_{u'})\meg \vartheta^2.
\end{equation}
\end{prop}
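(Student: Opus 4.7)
The plan is to extend the argument of Lemma \ref{l72} from successor pairs to arbitrary pairs in an $m$-dimensional Carlson--Simpson subtree, by using Theorem \ref{t23} to regularize every pair type at once. Via the canonical isomorphism $\mathrm{I}_V:[k]^{<m+1}\to V$, any pair in an $m$-dimensional Carlson--Simpson subtree $V$ corresponds to an unordered pair $\{\alpha,\beta\}\in\binom{[k]^{<m+1}}{2}$, of which there are only finitely many. Setting $T=|[k]^{<m+1}|$, I would color each $\mathbf{v}\in\mathrm{Subseq}_m(\mathbf{w})$ (where $\mathbf{w}$ generates $W$) by the indicator $c(\mathbf{v})\in\{0,1\}^{\binom{[k]^{<m+1}}{2}}$ whose value at $\{\alpha,\beta\}$ records whether $\mu(A_{\mathrm{I}_V(\alpha)}\cap A_{\mathrm{I}_V(\beta)})\meg\vartheta^2$. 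Choosing $\mathrm{Cor}_{2}(k,m,\vartheta,\varepsilon)$ at least $\mathrm{CS}(k,M_0,m,2^{\binom{T}{2}})$ for a sufficiently large auxiliary dimension $M_0$ (to be fixed below), Theorem \ref{t23} produces $\tilde{\mathbf{w}}\in\mathrm{Subseq}_{M_0}(\mathbf{w})$ on which $c$ takes a single value $c^*$.

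The proposition will then reduce to showing $c^*\equiv 1$, since any $\mathbf{v}\in\mathrm{Subseq}_m(\tilde{\mathbf{w}})$ would give the desired $m$-dimensional subtree $U$. I plan to prove this by contradiction: if $c^*(\{\alpha^*,\beta^*\})=0$ for some $\{\alpha^*,\beta^*\}$, then $\mu(A_{\mathrm{I}_V(\alpha^*)}\cap A_{\mathrm{I}_V(\beta^*)})<\vartheta^2$ for every $\mathbf{v}\in\mathrm{Subseq}_m(\tilde{\mathbf{w}})$. When $\alpha^*$ is a proper initial segment of $\beta^*$ (or vice versa) the pair is a successor pair and, choosing $M_0$ large enough, Lemma \ref{l72} applied to $\tilde{\mathbf{w}}$ extracts a Carlson--Simpson sub-subsequence in which all successor pairs have correlation $\meg\vartheta^2$; restricting to an $m$-dimensional sub-subsequence still living in $\mathrm{Subseq}_m(\tilde{\mathbf{w}})$ contradicts $c^*(\{\alpha^*,\beta^*\})=0$.

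When $\alpha^*$ and $\beta^*$ are incomparable, write $\gamma=\alpha^*\wedge\beta^*$ and $\alpha^*=\gamma^\con(a)^\con\alpha^{**}$, $\beta^*=\gamma^\con(b)^\con\beta^{**}$ with $a\neq b$. Following the spirit of Lemma \ref{l72}, I would partition the blocks of $\tilde{\mathbf{w}}$ after the coordinate carrying $\gamma$ to construct $N\meg(\varepsilon^2-\vartheta^2)^{-1}$ Carlson--Simpson subsequences $\mathbf{v}_1,\ldots,\mathbf{v}_N\in\mathrm{Subseq}_m(\tilde{\mathbf{w}})$ sharing a common $\gamma$--prefix and arranged so that the events $u_i=\mathrm{I}_{V_i}(\alpha^*)$ satisfy the following property: for every $i<j$ there exists $\mathbf{v}_{i,j}\in\mathrm{Subseq}_m(\tilde{\mathbf{w}})$ realizing the unordered pair $\{u_i,u_j\}$ as the type-$\{\alpha^*,\beta^*\}$ pair of $V_{i,j}$. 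Applying Lemma \ref{l26} to $(A_{u_i})_{i=1}^N$ then produces $i<j$ with $\mu(A_{u_i}\cap A_{u_j})\meg\vartheta^2$, contradicting $c^*(\{\alpha^*,\beta^*\})=0$ via $\mathbf{v}_{i,j}$.

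The main obstacle is this last construction: engineering the $\mathbf{v}_i$'s so that each pair $(u_i,u_j)$ is realized as the non-successor type $\{\alpha^*,\beta^*\}$ inside a common $\mathbf{v}_{i,j}\in\mathrm{Subseq}_m(\tilde{\mathbf{w}})$. One has to choose the parametrization so that, viewed inside $\mathbf{v}_{i,j}$, the $a$-branch of the $|\gamma|$-th left variable word of $\mathbf{v}_{i,j}$ reproduces the word $u_i$ and its $b$-branch reproduces $u_j$, while the tail substitutions after the divergence match $\alpha^{**}$ and $\beta^{**}$ respectively. The bookkeeping required to simultaneously accommodate all $\binom{N}{2}$ choices of $\mathbf{v}_{i,j}$ inside $\tilde{\mathbf{w}}$ is what dictates $M_0$, and hence the final bound $\mathrm{Cor}_{2}(k,m,\vartheta,\varepsilon)$.
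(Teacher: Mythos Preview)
Your outline is sound and can be completed into a correct proof, though by a route different from the paper's. You canonize by applying Theorem~\ref{t23} to a coloring of $\mathrm{Subseq}_m(\mathbf{w})$ that records, for each position pair $\{\alpha,\beta\}\subseteq[k]^{<m+1}$, whether the corresponding pair in the subtree has correlation $\meg\vartheta^2$. The paper instead develops in \S10.1 a word representation $\mathrm{R}:\mathrm{Pairs}([k]^{<\nn})\to\mathrm{W}(k,\mathcal{L})$ over an alphabet $\mathcal{L}$ of size $k^2+k$, colors $\mathrm{Pairs}(W)$ by membership in $\mathcal{P}=\{F:\mu(\cap_{w\in F}A_w)\meg\vartheta^2\}$, and invokes Theorem~\ref{t13} (Lemma~\ref{l103}) to pass to a subtree $V$ in which every word-type class $\mathrm{Pairs}_\tau(V)$ is monochromatic. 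Since every pair in an $m$-dimensional subtree has word-type of length at most $m$, the witness step (Lemma~\ref{l104}) then reduces to exhibiting, for each type $\tau$ with $|\tau|\mik m$, a single pair of that type with correlation $\meg\vartheta^2$; this is done by writing down an explicit sequence $(v_i)_{i=0}^{n_0}$ (with $n_0=\lceil(\ee^2-\vartheta^2)^{-1}\rceil$) tailored to the four possible shapes of $\tau$ and applying Lemma~\ref{l26}.

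The paper's choice buys a lighter witness step: one only needs a good pair of a prescribed word-type, not a whole $m$-dimensional subsequence with the pair landing at prescribed positions $\{\alpha^*,\beta^*\}$. Your ``main obstacle'' is precisely this extra embedding constraint; it can be met (the sequences in Lemma~\ref{l104} can be read back as blocks of a suitable $\mathbf{v}_{i,j}$, and then padded to dimension $m$), but you then have to verify that each resulting $\mathbf{v}_{i,j}$ lies in $\mathrm{Subseq}_m(\tilde{\mathbf{w}})$, and this case analysis is what you have left unwritten. Conversely, your approach is more self-contained in that it stays within the framework of Theorem~\ref{t23} and avoids building the representation machinery of \S10.1.
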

Proposition \ref{p101} implies, of course, that the constant $\theta(k,\ee,2)$ can be chosen to be $\ee^2-o(1)$. Its proof is given in
\S 10.2 and is based on a detailed analysis of the Ramsey properties of pairs of Carlson--Simpson trees. This analysis is carried out in \S 10.1. 

We mention that, in what follows, for any set $X$ by $\mathrm{Pairs}(X)$ we shall denote the set $\{F\subseteq X: |F|=2\}$. While
unconventional, this notation is quite informative and very convenient.

\subsection{Word representation of pairs of $[k]^{<\nn}$}

Let $k\in\nn$ with $k\meg 2$ and, as in \eqref{e43}, define
\begin{equation} \label{e102}
\mathcal{L}_2 = [k]^2\setminus \big\{(a,a): a\in [k]\big\}.
\end{equation}
Next fix a letter $*$ and let 
\begin{equation} \label{e103}
\mathcal{L}=\mathcal{L}_2 \cup \big\{(a,*): a\in [k]\big\} \cup \big\{(*,a): a\in [k]\big\}.
\end{equation}
Setting $A=[k]\cup\{*\}$, we see that $\mathcal{L}\subseteq A^2$. Also notice that $|\mathcal{L}|=k^2+k$. As in \S 3.1,
by $\mathrm{W}(k,\mathcal{L})$ we denote the set of all words over $[k]\cup\mathcal{L}$.

We shall define an injective map
\begin{equation} \label{e104}
\mathrm{R}: \mathrm{Pairs}([k]^{<\nn})\to \mathrm{W}(k,\mathcal{L})
\end{equation}
as follows. Let $F\in \mathrm{Pairs}([k]^{<\nn})$ be arbitrary. We consider the following cases.
\medskip

\noindent \textsc{Case 1:} \textit{$F$ is flat}. In this case let $\mathrm{R}(F)=\mathrm{R}_2(F)$ where
$\mathrm{R}_2:\mathrm{Fl}_2([k]^{<\nn})\to\mathrm{W}(k,\mathcal{L})$ is the ``representation" map defined in \S 4.3.
\medskip

\noindent \textsc{Case 2:} \textit{$F=\{w,u\}$ with $u$ a successor of $w$}. Write $u$ as $w^{\con}(a_0,...,a_{n-1})$ where $a_i\in [k]$
for every $i\in\{0,...,n-1\}$. We define 
\begin{equation} \label{e105}
\mathrm{R}(F)=w^{\con}(\tilde{a}_0,...,\tilde{a}_{n-1})
\end{equation}
where
\begin{equation} \label{e106}
\tilde{a}_i=\begin{cases}
(*,a_0) & \text{if } a_i=a_0, \\
a_i     & \text{otherwise} \\
\end{cases}
\end{equation}
for every $i\in\{0,...,n-1\}$.
\medskip

\noindent \textsc{Case 3:} \textit{$F=\{w,u\}$ with $w<_{\mathrm{lex}} u$ and $|w|<|u|$}. Let $m=|w|$ and $n=|u|$ and notice that $m<n$.
Write $w$ as $(a_0,...,a_{m-1})$ and $u$ as $(b_0,...,b_{n-1})$, and define 
\begin{equation} \label{e107}
\mathrm{R}(F)=\mathrm{R}_2\big( \{(a_0,...,a_{m-1}),(b_0,...,b_{m-1})\} \big)^{\!\con}\!(\tilde{b}_m,...,\tilde{b}_{n-1})
\end{equation}
where 
\begin{equation} \label{e108}
\tilde{b}_j=\begin{cases}
(*,b_m) & \text{if } b_j=b_m, \\
b_j     & \text{otherwise} \\
\end{cases}
\end{equation}
for every $j\in\{m,...,n-1\}$.
\medskip

\noindent \textsc{Case 4:} \textit{$F=\{w,u\}$ with $w<_{\mathrm{lex}} u$ and $|w|>|u|$}. As in the previous case, let $m=|w|$ and $n=|u|$.
Also write $w$ as $(a_0,...,a_{m-1})$ and $u$ as $(b_0,...,b_{n-1})$. We define 
\begin{equation} \label{e109}
\mathrm{R}(F)=\mathrm{R}_2\big( \{(a_0,...,a_{n-1}),(b_0,...,b_{n-1})\} \big)^{\!\con}\!(\tilde{a}_n,...,\tilde{a}_{m-1})
\end{equation}
where 
\begin{equation} \label{e1010}
\tilde{a}_j=\begin{cases}
(a_n,*) & \text{if } a_j=a_n, \\
a_j     & \text{otherwise} \\
\end{cases}
\end{equation}
for every $j\in\{n,...,m-1\}$.
\medskip

The above cases are exhaustive, and so, this completes the definition of the map $\mathrm{R}$ which is easily seen to be injective.
For every $F\in\mathrm{Pairs}([k]^{<\nn})$ we call the word $\mathrm{R}(F)$ the ``representation" of $F$. We define the
\textit{type} of $F$ to be the type of its representation $\mathrm{R}(F)$ in $\mathrm{W}(k,\mathcal{L})$ (see \S 4.3).

We remark that not all types in $\mathcal{T}(\mathcal{L})$ are realized as the type of a pair of $[k]^{<\nn}$. Actually, the type $\tau$
of a pair $F$ has a very particular form depending, of course, on the nature of $F$. Specifically, we have the following possibilities.
\begin{enumerate}
\item[(1)] In Case 1 the type $\tau$ is of the form $\big((a_0,b_0),...,(a_{\ell-1}, b_{\ell-1})\big)$ where $\ell$ is a positive integer
and $a_0,b_0,...,a_{\ell-1},b_{\ell-1}\in [k]$.
\item[(2)] In Case 2 the type $\tau$ is of the form $\big((*,a)\big)$ where $a\in [k]$. In particular, in this case there are exactly 
$k$ many types. All of them have length $1$.
\item[(3)] In Case 3 the type $\tau$ is of the form $\big((a_0,b_0),...,(a_{\ell-1},b_{\ell-1})\big)^{\!\con}\!(*,b)$ where $\ell$ is
a positive integer and $a_0,b_0,...,a_{\ell-1},b_{\ell-1},b\in [k]$.
\item[(4)] In Case 4 the type $\tau$ is of the form $\big((a_0,b_0),...,(a_{\ell-1},b_{\ell-1})\big)^{\!\con}\!(a,*)$ where $\ell$ is
a positive integer and $a_0,b_0,...,a_{\ell-1},b_{\ell-1},a\in [k]$.
\end{enumerate}

We proceed to discuss two important properties guaranteed by the above definitions. They are isolated in Lemmas \ref{l103} and
\ref{l104} below. However, first we need to introduce some pieces of notation. Specifically, for every Carlson--Simpson tree $W$
of $[k]^{<\nn}$ and every $\tau\in\mathcal{T}(\mathcal{L})$ we set
\begin{equation} \label{e1011}
\mathrm{Pairs}_\tau(W)= \{F\in \mathrm{Pairs}(W): F \text{ is of type } \tau\}.
\end{equation}
Moreover, if $a\in[k]\cup\mathcal{L}$ and $i$ is a positive integer, then let 
\begin{equation} \label{e1012}
a^i=(\underbrace{a,...,a}_{i\mathrm{-times}}).
\end{equation}
By convention, $a^0$ is defined to be the empty word.

The following fact is straightforward.
\begin{fact} \label{f102}
Let $V$ be a Carlson--Simpson tree of $[k]^{<\nn}$ and denote by $\mathbf{v}$ its generating sequence. Then the following hold.
\begin{enumerate}
\item[(i)] The restriction of $\mathrm{R}$ to $\mathrm{Pairs}(V)$ is an injection into $\mathbf{v}(k,\mathcal{L})$.
\item[(ii)] Let $\tau\in\mathcal{T}(\mathcal{L})$ and assume that the set $\mathrm{Pairs}_{\tau}(V)$ is nonempty.
Then the restriction of $\mathrm{R}$ to $\mathrm{Pairs}_\tau(V)$ is onto $\mathbf{v}(k,\mathcal{L},\tau)$. In particular, the map
$\mathrm{R}:\mathrm{Pairs}_\tau(V)\to \mathbf{v}(k,\mathcal{L},\tau)$ is a bijection.
\end{enumerate}
\end{fact}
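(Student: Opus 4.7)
The plan is to prove both parts by case analysis matching the four cases in the definition of $\mathrm{R}$. Write the generating sequence of $V$ as $\mathbf{v}=(v,v_0,\ldots,v_{n-1})$ and note that every element of $V$ has a unique representation $\mathbf{v}(a_0,\ldots,a_{\ell-1})$ with $\ell\in\{0,\ldots,n\}$ and $a_i\in[k]$. Since the $v_i$ are left variable words, the canonical isomorphism $\mathrm{I}_V$ preserves lex order, levels, and the ``proper initial segment'' relation (the first letter of each $v_i(a)$ is $a$); thus the case to which a pair $F\subseteq V$ belongs can be read off directly from its two representatives in $[k]^{<n+1}$.

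For part (i), the key observation is that the four cases yield types of the four distinct shapes (1)--(4) listed after the definition of $\mathrm{R}$, so pairs in different cases have representations of different types. Within a single case the representation is injective by inspection: one reads off $w$ from the leading $[k]$-part of $\mathrm{R}(F)$ and then recovers the remaining letters from the $\tilde a_i$'s (each of which is either a letter of $[k]$ or a fixed $\mathcal{L}$-marker). To check that the image lies inside $\mathbf{v}(k,\mathcal{L})$, Case 1 is exactly Fact~\ref{f42}(b) for $p=2$. For Cases 2--4, the plan is to construct, from the representatives $(a_0,\ldots,a_{\ell_1-1})$ and $(b_0,\ldots,b_{\ell_2-1})$ of $w_1,u_1$ in $[k]^{<n+1}$, a word $\tilde v\in\mathrm{W}(k,\mathcal{L})$ of length at most $n$ whose $i$-th coordinate pairs $a_i$ with $b_i$ according to the rule of the relevant case (an element of $\mathcal{L}_2$ when both indices are defined and differ, a single letter of $[k]$ when they agree, and a $(*,a)$- or $(a,*)$-letter when only one of the two words is still defined at coordinate $i$), and verify that $\mathbf{v}(\tilde v)=\mathrm{R}(F)$.

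For part (ii), given $\mathbf{v}(\tilde v)\in\mathbf{v}(k,\mathcal{L},\tau)$ with $\mathrm{Pairs}_\tau(V)\neq\varnothing$, the shape of $\tau$ selects the case, and we recover $F=\{w_1,u_1\}\subseteq V$ with $\mathrm{R}(F)=\mathbf{v}(\tilde v)$ by reading $\tilde v$ coordinate-by-coordinate: in shape (1) this is Fact~\ref{f42}(c) for $p=2$; in shape (2) the coordinates of $\tilde v$ lie in $[k]\cup\{(*,a):a\in[k]\}$, and splitting at the first $\mathcal{L}$-letter produces the prefix $w_1$ and the extension to $u_1$; shapes (3) and (4) are handled symmetrically by splitting $\tilde v$ into its ``common prefix'' part (coordinates in $[k]\cup\mathcal{L}_2$) and its ``tail'' part (coordinates of the form $(*,a)$ or $(a,*)$). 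The main obstacle will be the bookkeeping in Cases 2--4: one must carefully track how the $(*,a_0)$-marker distributes over the substitutions into the left variable words $v_i$, so that the word-level recipe defining $\mathrm{R}$ agrees with the substitution formula for $\mathbf{v}(\tilde v)$.
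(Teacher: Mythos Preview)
Your outline follows the natural approach, and the paper itself offers no argument beyond the word ``straightforward.'' The injectivity of $\mathrm{R}$ and your observation that $\mathrm{I}_V$ preserves lex order, levels, and the successor relation are both fine. But the step you correctly single out as the main obstacle---showing that $\mathbf{v}(\tilde v)=\mathrm{R}(F)$ in Cases 2--4---does not go through, and indeed part~(i) is false for a general Carlson--Simpson tree $V$. The point is that the recipe for $\mathrm{R}$ converts \emph{every} coordinate equal to the distinguished letter ($a_0$ in Case~2, $b_m$ in Case~3, $a_n$ in Case~4) into an $\mathcal{L}$-marker, whereas in $\mathbf{v}(\tilde v)$ only the variable positions of the $v_i$ receive the substituted letter; a constant letter of some $v_i$ that happens to equal the distinguished letter stays in $[k]$.

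A concrete counterexample: take $k=2$, $\mathbf{v}=(\varnothing,\,x,\,x^{\con}1)$, and $F=\{1,111\}\in\mathrm{Pairs}(V)$. This falls under Case~2 with $a_0=a_1=1$, so $\mathrm{R}(F)=\big(1,(*,1),(*,1)\big)$; but every length-$3$ word in $\mathbf{v}(k,\mathcal{L})$ has the form $(z_0,z_1,1)$, with last coordinate the constant $1$ of $v_1$, whence $\mathrm{R}(F)\notin\mathbf{v}(k,\mathcal{L})$. Part~(ii) fails for the dual reason: with $\mathbf{v}=(\varnothing,x,x)$ and $\tau=\big((*,1)\big)$, the word $\big((*,1),1\big)$ lies in $\mathbf{v}(k,\mathcal{L},\tau)$ but is not $\mathrm{R}(F)$ for any pair $F$, since in Case~2 no coordinate after the first marker can equal $a_0$ as an element of $[k]$. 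So the ``bookkeeping'' you anticipate is not merely tedious; it reveals that the fact as stated needs amendment (it does hold when every $v_i$ is the bare variable $x$, i.e.\ when $V=[k]^{<d+1}$).
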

We are ready to state the first main result of this subsection.
\begin{lem} \label{l103}
Let $k,d,r,n$ be positive integers with $k\meg 2$. Also let $W$ be a Carlson--Simpson tree of $[k]^{<\nn}$ and assume that
\begin{equation} \label{e1013}
\dim(W)\meg \mathrm{FK}(k,k^2+k,d,r).
\end{equation}
Then for every $r$-coloring of the set $\mathrm{Pairs}(W)$ there exists a Carlson--Simpson subtree $V$ of $W$ with $\dim(V)=d$
such that for every type $\tau\in\mathcal{T}(\mathcal{L})$ the set $\mathrm{Pairs}_{\tau}(V)$ is either empty or monochromatic. 
\end{lem}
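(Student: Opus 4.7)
\medskip
\noindent\emph{Proof plan.} The plan is to reduce the statement to Theorem \ref{t13} by pushing the coloring of $\mathrm{Pairs}(W)$ forward via the representation map $\mathrm{R}$ constructed in \S 10.1. Let $\mathbf{w}$ be the generating sequence of $W$, and note that $|\mathcal{L}|=k^2+k$ by \eqref{e103}, so the hypothesis reads $\dim(\mathbf{w})\meg \mathrm{FK}(k,k^2+k,d,r)$. This is exactly the quantitative input needed to invoke Theorem \ref{t13} for the alphabet $\mathcal{L}$.

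First I would verify that $\mathrm{R}(\mathrm{Pairs}(W))\subseteq \mathbf{w}(k,\mathcal{L})$. This is a case-by-case inspection following the definition of $\mathrm{R}$ in \eqref{e104}--\eqref{e1010}: in each of the four cases the word $\mathrm{R}(F)$ is obtained from an element of $W$ by replacing some of its $[k]$-coordinates with letters from $\mathcal{L}$, and therefore admits the presentation $\mathbf{w}(v)$ for some $v\in \mathrm{W}(k,\mathcal{L})$ of length at most $\dim(\mathbf{w})$. Combined with the injectivity of $\mathrm{R}$ on $\mathrm{Pairs}(W)$ recorded in Fact \ref{f102}(i), this allows me to define a coloring $\tilde{c}\colon \mathbf{w}(k,\mathcal{L})\to [r]$ by the rule
\[ \tilde{c}(u)=\begin{cases} c(F) & \text{if } u=\mathrm{R}(F) \text{ for some (necessarily unique) } F\in \mathrm{Pairs}(W), \\ 1 & \text{otherwise.} \end{cases} \]

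Next, applying Theorem \ref{t13} to $\tilde{c}$ and $\mathbf{w}$ yields a $d$-dimensional Carlson--Simpson subsequence $\mathbf{v}$ of $\mathbf{w}$ such that for every type $\tau\in\mathcal{T}(\mathcal{L})$ the set $\mathbf{v}(k,\mathcal{L},\tau)$ is either empty or $\tilde{c}$-monochromatic. Let $V$ be the Carlson--Simpson subtree of $W$ generated by $\mathbf{v}$. I claim $V$ is as required. Fix $\tau$ and suppose $\mathrm{Pairs}_\tau(V)\neq\varnothing$. By Fact \ref{f102}(ii) the restriction $\mathrm{R}\colon\mathrm{Pairs}_\tau(V)\to \mathbf{v}(k,\mathcal{L},\tau)$ is a bijection, so $\mathbf{v}(k,\mathcal{L},\tau)$ is nonempty and therefore $\tilde{c}$-monochromatic; since $\tilde{c}(\mathrm{R}(F))=c(F)$ for every $F\in\mathrm{Pairs}_\tau(V)$, the set $\mathrm{Pairs}_\tau(V)$ is $c$-monochromatic.

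The only substantive verification is the compatibility between $\mathrm{R}$ and the structure $\mathbf{v}(k,\mathcal{L},\tau)$ used in Fact \ref{f102}(ii): it requires a patient check that each of the four shapes of a type listed in (1)--(4) of \S 10.1 is respected by passing from $\mathbf{w}$ to a subsequence $\mathbf{v}$, so that representations of pairs lying inside $V$ are precisely the elements of $\mathbf{v}(k,\mathcal{L})$ whose type comes from a pair. Beyond this bookkeeping step the lemma is a direct application of Theorem \ref{t13}, and I do not foresee any genuine combinatorial obstacle.
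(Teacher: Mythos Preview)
Your proposal is correct and follows essentially the same route as the paper's proof: push the coloring forward along $\mathrm{R}$ to define $\tilde{c}$ on $\mathbf{w}(k,\mathcal{L})$ (using Fact~\ref{f102}(i) for well-definedness), apply Theorem~\ref{t13} with $q=|\mathcal{L}|=k^2+k$ to obtain $\mathbf{v}$, and then invoke Fact~\ref{f102}(ii) to transfer monochromaticity back to $\mathrm{Pairs}_\tau(V)$. The only cosmetic difference is your choice of default color $1$ versus the paper's $r$.
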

\begin{proof} 
Fix a coloring $c:\mathrm{Pair}(W)\to [r]$ and define $\tilde{c}:\mathbf{w}(k,\mathcal{L})\to [r]$ by the rule
\begin{equation} \label{e1014}
\tilde{c}(w)=
\begin{cases}
c(F)  & \text{if there exists } F\in\mathrm{Pairs}(W) \text{ with } \mathrm{R}(F)=w, \\
r    & \text{otherwise}. \\
\end{cases}
\end{equation}
By Fact \ref{f102}, the coloring $\tilde{c}$ is well-defined. Next recall that $|\mathcal{L}|=k^2+k$. Therefore, by \eqref{e1013}
and Theorem \ref{t14}, there exists a $d$-dimensional Carlson--Simpson subsequence $\mathbf{v}$ of $\mathbf{w}$ such that for every
$\tau\in\mathcal{T}(\mathcal{L})$ the set $\mathbf{v}(k,\mathcal{L},\tau)$ is monochromatic. Let $V$ be the Carlson--Simpson subtree
of $W$ generated by $\mathbf{v}$. Invoking Fact \ref{f102} we see that $V$ is as desired. 
\end{proof}
The final result of this subsection is the following lemma.
\begin{lem} \label{l104}
Let $k,m\in\nn$ with $k\meg 2$ and $m\in\nn$, and $0<\vartheta<\varepsilon\mik 1$. Also let $V$ be a Carlson--Simpson tree of $[k]^{<\nn}$
with 
\begin{equation} \label{e1015}
\dim(V)\meg \lceil (\varepsilon^2-\vartheta^2)^{-1}\rceil \cdot (m+1).
\end{equation}
Finally let $\{A_v:v\in V\}$ be a family of measurable events in a probability space $(\Omega,\Sigma,\mu)$ satisfying $\mu(A_v)\meg\ee$
for every $v\in V$. If $\tau\in \mathcal{T}(\mathcal{L})$ is a type such that $\mathrm{Pairs}_{\tau}(V)\neq\varnothing$ and $|\tau|\mik m$,
then there exists $\{v,v'\}\in \mathrm{Pairs}_{\tau}(V)$ such that
\begin{equation} \label{e1016}
\mu(A_v\cap A_{v'})\meg\vartheta^2.
\end{equation}
\end{lem}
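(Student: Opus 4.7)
The plan is to reduce the statement to Lemma~\ref{l26}. Set $N=\lceil(\ee^2-\vartheta^2)^{-1}\rceil$, so that $\dim(V)\meg N(m+1)$. I will produce $N$ elements $v_1,\dots,v_N\in V$ (each with $\mu(A_{v_i})\meg\ee$ by hypothesis) with the property that every pair $\{v_i,v_j\}$ with $i\neq j$ belongs to $\mathrm{Pairs}_\tau(V)$. Granted such a configuration, Lemma~\ref{l26} produces $i\neq j$ with $\mu(A_{v_i}\cap A_{v_j})\meg\vartheta^2$, and the resulting pair is of type $\tau$ by construction.

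To build the $v_i$, identify $V$ with $[k]^{<d+1}$ through the canonical isomorphism $\mathrm{I}_V$, where $d=\dim(V)$. Since each $v_j$ in the generating sequence of $V$ is a \emph{left} variable word, the leftmost letter of $v_j(c)$ equals $c$; combining this with Fact~\ref{f24}(b), a direct unpacking of the four cases in the definition of the representation map $\mathrm{R}$ from \S 10.1 shows that $\mathrm{I}_V$ preserves the type of pairs. Hence the task reduces to finding abstract elements $s_1,\dots,s_N\in [k]^{<d+1}$ whose pairwise types all equal $\tau$.

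The construction splits according to the case of $\tau$. In \textsc{Case~1}, $\tau=((a_0,b_0),\dots,(a_{\ell-1},b_{\ell-1}))$ with $\ell=|\tau|\mik m$: since the type is realizable one has $a_0<b_0$, so defining $s_i\in[k]^{\ell N}$ to be the concatenation of $\ell$ length-$N$ blocks whose $j$-th block consists of $N-i$ copies of $a_j$ followed by $i$ copies of $b_j$ yields $s_1<_{\mathrm{lex}}\dots<_{\mathrm{lex}}s_N$ automatically. In \textsc{Case~2}, $\tau=((*,a))$: take $s_i=a^{i-1}$. In \textsc{Case~3}, $\tau=\sigma^{\con}(*,b)$ with $\sigma$ a \textsc{Case~1} type of length $\ell\mik m-1$: letting $w_i\in [k]^{\ell N}$ be the \textsc{Case~1} construction for $\sigma$, put $s_i=w_i^{\con} b^{i-1}$. \textsc{Case~4}, $\tau=\sigma^{\con}(a,*)$, is handled symmetrically with $s_i=w_i^{\con} a^{N-i}$ (lengths strictly decreasing with $i$ while the lex order is preserved). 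In every case $|s_i|\mik (m+1)N\mik d$, so $s_i\in [k]^{<d+1}$, and a direct evaluation of $\mathrm{R}(\{s_i,s_j\})$ yields exactly the type $\tau$, using that consecutive entries of a type are distinct (so $\mathcal{L}$-runs do not merge across block boundaries) and that $[k]$-letters are erased. The main obstacle is the ``tail'' analysis in \textsc{Cases~3} and \textsc{4}: positions of the longer element beyond the shorter must contribute only $(*,b)$- (respectively $(a,*)$-)symbols to the representation, so that the suffix of the type collapses to a single letter; this is precisely what the constant suffixes in the construction ensure.
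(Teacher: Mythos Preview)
Your proposal is correct and follows essentially the same approach as the paper's own proof: reduce to $[k]^{<d+1}$ via the canonical isomorphism, split into the four cases for the shape of $\tau$, and in each case exhibit a ``staircase'' of $N$ words whose pairwise representations all collapse to $\tau$, so that Lemma~\ref{l26} finishes the job. Your block order in \textsc{Case~1} ($a_j^{N-i}b_j^{\,i}$ versus the paper's $b_j^{\,i}a_j^{n_0-i}$) is an inessential variant, and your \textsc{Case~4} construction with the \emph{decreasing}-length suffix $a^{N-i}$ is the clean way to force the lex-smaller element to be the longer one, which is exactly what the $(a,*)$ tail demands.
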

\begin{proof} 
We set $d=\dim(V)$. Clearly we may assume that $V=[k]^{<d+1}$. Also let
\begin{equation} \label{e1017}
n_0=\lceil(\varepsilon^2-\vartheta^2)^{-1}\rceil.
\end{equation}
Fix a type $\tau\in\mathcal{T}(\mathcal{L})$ with $|\tau|\mik m$ and such that $\mathrm{Pairs}_{\tau}([k]^{<d+1})\neq\varnothing$. We need to find
$\{v,v'\}\in\mathrm{Pairs}_{\tau}([k]^{<d+1})$ such that the estimate in \eqref{e1016} is satisfied for the pair $\{v,v'\}$. The argument below
is not uniform and depends on the form of the type $\tau$. In particular, we distinguish the following cases. 
\medskip

\noindent \textsc{Case 1:} \textit{we have that $\tau=\big((a_0,b_0),...,(a_{\ell-1}, b_{\ell-1})\big)$ where $|\tau|=\ell\in [m]$
and $a_0,b_0,...,a_{\ell-1},b_{\ell-1}\in [k]$}. For every $i\in\{0,...,n_0\}$ define
\begin{equation} \label{e1018}
v_i=(b_0^i)^{\con}(a_0^{n_0-i})^{\con}(b_1^i)^{\con}(a_1^{n_0-i})^\con...^\con(b_{\ell-1}^i)^{\con}(a_{\ell-1}^{n_0-i}).
\end{equation}
Notice that $v_i\in [k]^{n_0\cdot\ell}$. Hence, by \eqref{e1015}, we have $v_i\in [k]^{<d+1}$ which implies that $\mu(A_{v_i})\meg\ee$.
By \eqref{e1017} and Lemma \ref{l27}, there exist $i,j\in\{0,...,n_0\}$ with $i<j$ such that $\mu(A_{v_i}\cap A_{v_j})\meg \vartheta^2$.
We set $F=\{v_i,v_j\}$. It is enough to show that $F\in\mathrm{Pairs}_{\tau}([k]^{<d+1})$. Indeed, by \eqref{e1018}, we see that $\mathrm{R}(F)$
is the word
\begin{equation} \label{e1019}
(b_0^i)^{\con}\big((a_0, b_0)^{j-i}\big)^{\con}(a_0^{n_0-j})^\con ...^\con(b_{\ell-1}^i)^\con 
\big((a_{\ell-1}, b_{\ell-1})^{j-i}\big)^\con (a_{\ell-1}^{n_0-j}).
\end{equation}
This easily implies that the type of $F$ is $\tau$. Since $F\subseteq [k]^{<d+1}$ we conclude that $F\in\mathrm{Pairs}_{\tau}([k]^{<d+1})$.
\medskip

\noindent \textsc{Case 2:} \textit{we have that $\tau=\big((*,a)\big)$ where $a\in [k]$}. For every $i\in\{0,...,n_0\}$ let
$v_i=a^i$ and notice that $v_i\in [k]^i\subseteq [k]^{<d+1}$. In particular, we have $\mu(A_{v_i})\meg \ee$. Arguing precisely
as in the previous case, we find $i,j\in\{0,...,n_0\}$ with $i<j$ and such that $\mu(A_{v_i}\cap A_{v_j})\meg \vartheta^2$. It is then easy
to see that $F=\{v_i,v_j\}$ is as desired. 
\medskip

\noindent \textsc{Case 3:} \textit{we have that $\tau=\big((a_0,b_0),...,(a_{\ell-1},b_{\ell-1})\big)^\con(*,b)$ where $|\tau|-1=\ell\in [m-1]$
and $a_0,b_0,..., a_{\ell-1}, b_{\ell-1},b\in [k]$}. For every $i\in\{0,...,n_0\}$ define
\begin{equation} \label{e1020}
v_i=(b_0^i)^{\con}(a_0^{n_0-i})^{\con}(b_1^i)^{\con}(a_1^{n_0-i})^\con...^\con(b_{\ell-1}^i)^{\con}(a_{\ell-1}^{n_0-i})^\con (b^i)
\end{equation}
and notice that $v_i\in [k]^{n_0\cdot \ell+i}\subseteq [k]^{<d+1}$. Arguing as above, we find $i,j\in\{0,...,n_0\}$ with $i<j$ and such that
$\mu(A_{v_i}\cap A_{v_j})\meg \vartheta^2$. We set $F=\{v_i,v_j\}$. Clearly it is enough to show that $F\in\mathrm{Pairs}_{\tau}([k]^{<d+1})$.
To this end we observe that, by \eqref{e1020}, the ``representation" $\mathrm{R}(F)$ of $F$ is the word
\begin{eqnarray} \label{e1021}
& & (b_0^i)^{\con}\big((a_0, b_0)^{j-i}\big)^{\con}(a_0^{n_0-j})^\con ... \\
& & \ \ \ \ \ \ \ \ \ \ \ \ ...^\con(b_{\ell-1}^i)^\con \big((a_{\ell-1}, b_{\ell-1})^{j-i}\big)^\con
(a_{\ell-1}^{n_0-j})^{\con}(b^i)^{\con}\big((*,b)^{j-i}\big). \nonumber
\end{eqnarray}
It follows that the type of $F$ is $\tau$ and, in particular, that $F\in\mathrm{Pairs}_{\tau}([k]^{<d+1})$.
\medskip

\noindent \textsc{Case 4:} \textit{we have that $\tau=\big((a_0,b_0),...,(a_{\ell-1},b_{\ell-1})\big)^\con(a,*)$ where $|\tau|-1=\ell\in\! [m-1]$
and $a_0,b_0,..., a_{\ell-1}, b_{\ell-1},a\in [k]$}. For every $i\in\{0,...,n_0\}$ let
\begin{equation} \label{e1022}
v_i=(b_0^i)^{\con}(a_0^{n_0-i})^{\con}(b_1^i)^{\con}(a_1^{n_0-i})^\con...^\con(b_{\ell-1}^i)^{\con}(a_{\ell-1}^{n_0-i})^\con (a^i)
\end{equation}
and notice that $v_i\in [k]^{n_0\cdot \ell+i}\subseteq [k]^{<d+1}$. Hence, there exist $0\mik i<j\mik n_0$ with 
$\mu(A_{v_i}\cap A_{v_j})\meg \vartheta^2$ and such that, setting $F=\{v_i,v_j\}$, the ``representation" $\mathrm{R}(F)$ of $F$ is the word
\begin{eqnarray} \label{e1023}
& & (b_0^i)^{\con}\big((a_0, b_0)^{j-i}\big)^{\con}(a_0^{n_0-j})^\con ... \\
& & \ \ \ \ \ \ \ \ \ \ \ \ ...^\con(b_{\ell-1}^i)^\con \big((a_{\ell-1}, b_{\ell-1})^{j-i}\big)^\con
(a_{\ell-1}^{n_0-j})^{\con}(a^i)^{\con}\big((a,*)^{j-i}\big). \nonumber
\end{eqnarray}
Therefore, the pair $F$ is as desired. The above cases are exhaustive and so the entire proof is completed.
\end{proof}

\subsection{Proof of Proposition \ref{p101}}

We set 
\begin{equation} \label{e1024}
\mathrm{Cor}_{2}(k,m,\vartheta,\varepsilon)=  \mathrm{FK}\big(k,k^2+k,\lceil (\varepsilon^2-\vartheta^2)^{-1}\rceil \cdot (m+1),2\big)
\end{equation} 
and we claim that with this choice the result follows.

Indeed, fix a Carlson--Simpson tree $W$ of $[k]^{<\nn}$ with $\dim(W)\meg \mathrm{Cor}_2(k,m,\vartheta,\ee)$ and a family $\{A_w:w\in W\}$
of measurable events in a probability space $(\Omega,\Sigma,\mu)$ satisfying $\mu(A_w)\meg\ee$ for every $w\in W$. We set 
\begin{equation} \label{e1025}
\mathcal{P}=\Big\{ F\in \mathrm{Pairs}(W): \mu\Big( \bigcap_{w\in F}A_w\Big)\meg \vartheta^2\Big\}.
\end{equation}
By \eqref{e1024} and Lemma \ref{l103}, there exists a Carlson-Simpson subtree $V$ of $W$ with $\dim(V)=\lceil (\varepsilon^2-\vartheta^2)^{-1}
\rceil\cdot (m+1)$ and such that for every type $\tau\in\mathcal{T}(\mathcal{L})$ with $\mathrm{Pairs}_{\tau}(V)\neq\varnothing$ we have that
either $\mathrm{Pairs}_\tau(V)\subseteq \mathcal{P}$ or $\mathrm{Pairs}_\tau(V)\cap\mathcal{P}=\varnothing$. By the estimate on the dimension
of $V$ and Lemma \ref{l104}, we see that $\mathrm{Pairs}_\tau(V)\cap\mathcal{P}\neq\varnothing$ provided that $\mathrm{Pairs}_\tau(V)$ is
nonempty and $|\tau|\mik m$. Therefore, for every type $\tau$ with $|\tau|\mik m$ we have $\mathrm{Pairs}_\tau(V)\subseteq \mathcal{P}$.
Let $U$ be any $m$-dimensional Carlson--Simpson subtree of $V$. Clearly $U$ is as desired. 


\section{Free sets}

\numberwithin{equation}{section}

In this section we discuss a second quantitative refinement of Theorem \ref{t12}. Specifically our goal is to obtain optimal lower bounds for
the correlation of the events $\{A_t: t\in F\}$ in \eqref{e13} provided that the set $F$ is \textit{free}. The class of free sets was introduced
in \cite{DKT1} and includes various well-known subsets of Carlson--Simpson trees such as all finite chains, all doubletons and many more.
Its definition and main properties are recalled in \S 11.1. In \S 11.2 we state the main result of this section, Theorem \ref{t114} below.
The proof of Theorem \ref{t114} is given in \S 11.3. 

\subsection{Free sets: definitions and main properties}

We start with the following definition (see \cite[Definition 6.1]{DKT1}).
\begin{defn} \label{d111}
Let $k\in\nn$ with $k\meg 2$. Recursively, for every integer $n\meg 1$ we define a family $\mathrm{Fr}_n([k]^{<\nn})$ as follows. First let
$\mathrm{Fr}_1([k]^{<\nn})$ and $\mathrm{Fr}_2([k]^{<\nn})$ consist of all singletons and all doubletons of $[k]^{<\nn}$ respectively. Let
$n\in\nn$ with $n\meg 2$ and assume that the family $\mathrm{Fr}_n([k]^{<\nn})$ has been defined. Then $\mathrm{Fr}_{n+1}([k]^{<\nn})$ consists
of all subsets of $[k]^{<\nn}$ which can be written in the form $\{w\}\cup G$ where $w\in [k]^{<\nn}$ and $G\in \mathrm{Fr}_n([k]^{<\nn})$ are
such that $|w|<|\!\wedge G|$. We set
\begin{equation} \label{e111}
\mathrm{Fr}([k]^{<\nn})=\bigcup_{n\meg 1}\mathrm{Fr}_n([k]^{<\nn}).
\end{equation}
An element of $\mathrm{Fr}([k]^{<\nn})$ will be called a \emph{free} subset of $[k]^{<\nn}$.
\end{defn}
We have the following characterization of free sets. Its proof if straightforward.
\begin{fact} \label{f112}
Let $k\in\nn$ with $k\meg 2$. Also let $F$ be a finite subset of $[k]^{<\nn}$ with $|F|\meg 3$. Then $F$ is free if and only if there exists
an enumeration $\{w_1,...,w_n\}$ of $F$ such that
\begin{enumerate}
\item[(1)] $|w_1|< |w_2|< \cdots <|w_{n-1}|\mik |w_n|$ and
\item[(2)] $|w_i|<|\!\wedge\{w_{i+1},...,w_n\}|$ for every $i\in\{1,...,n-2\}$.
\end{enumerate}
\end{fact}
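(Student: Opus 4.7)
The plan is a direct induction on $n=|F|$, exploiting the recursive nature of Definition \ref{d111}. First observe that the condition $|w|<|\!\wedge G|$ forces $|w|<|g|$ for every $g\in G$, so $w\notin G$ and the cardinality strictly increases at each recursive step; consequently every element of $\mathrm{Fr}_n([k]^{<\nn})$ has cardinality exactly $n$, and hence being free with cardinality $n$ is equivalent to membership in $\mathrm{Fr}_n([k]^{<\nn})$.

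For the forward direction, if $n=3$ I would write $F=\{w\}\cup\{u_1,u_2\}$ with $|w|<|\!\wedge\{u_1,u_2\}|$ and $|u_1|\mik|u_2|$; then setting $(w_1,w_2,w_3)=(w,u_1,u_2)$, conditions (1) and (2) follow at once from $|\!\wedge\{u_1,u_2\}|\mik|u_1|$. For $n\meg 4$, write $F=\{w\}\cup G$ with $G\in\mathrm{Fr}_{n-1}([k]^{<\nn})$, apply the inductive hypothesis to $G$ to obtain an enumeration $\{u_1,\dots,u_{n-1}\}$ satisfying (1) and (2), and set $w_1=w$ and $w_{i+1}=u_i$ for $i\in\{1,\dots,n-1\}$. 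Condition (1) for the new enumeration transfers from the one of $G$ together with the bound $|w|<|\!\wedge G|\mik|u_1|$, while condition (2) for $i=1$ is precisely the defining inequality $|w|<|\!\wedge G|$ and for $i\meg 2$ it reduces after an index shift to the corresponding property of the enumeration of $G$.

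For the reverse direction, suppose $F$ admits an enumeration $\{w_1,\dots,w_n\}$ obeying (1) and (2). The case $n=3$ is immediate: $\{w_2,w_3\}\in\mathrm{Fr}_2([k]^{<\nn})$ and (2) at $i=1$ yields $|w_1|<|\!\wedge\{w_2,w_3\}|$, so $F\in\mathrm{Fr}_3([k]^{<\nn})$. For $n\meg 4$ set $G=\{w_2,\dots,w_n\}$; the restricted enumeration has $n-1\meg 3$ elements and, after shifting the index by one, still obeys (1) and (2), so by the inductive hypothesis $G\in\mathrm{Fr}_{n-1}([k]^{<\nn})$. Condition (2) at $i=1$ then gives $|w_1|<|\!\wedge G|$, and by Definition \ref{d111} we conclude $F\in\mathrm{Fr}_n([k]^{<\nn})$. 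The only point requiring mild care is the bookkeeping in (2): the range $i\in\{1,\dots,n-2\}$ is exactly what is needed when peeling off $w_1$, and the potential equality $|w_{n-1}|=|w_n|$ in (1) is harmless since then $|\!\wedge\{w_{n-1},w_n\}|<|w_{n-1}|$, keeping the strict inequality at $i=n-2$ in (2) consistent with (1).
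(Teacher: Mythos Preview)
Your induction is correct and is precisely the straightforward argument the paper has in mind; indeed, the paper does not spell out a proof at all, declaring the fact ``straightforward'', and the recursive structure of Definition \ref{d111} makes your peeling-off-$w_1$ induction the natural (and essentially only) route.
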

For every Carlson--Simpson tree $W$ of $[k]^{<\nn}$ by $\mathrm{Fr}(W)$ we shall denote the set of all free subsets of $[k]^{<\nn}$ which
are contained in $W$. Moreover, for every integer $n\meg 1$ we set
\begin{equation} \label{e112}
\mathrm{Fr}_n(W)=\{ F\in \mathrm{Fr}(W): |F|=n\}.
\end{equation}
We isolate, for future use, the following fact. Its proof is also straightforward. 
\begin{fact} \label{f113}
Let $k,d\in\nn$ with $k\meg 2$ and $d\meg 1$. Also let $W$ be a $d$-dimensional Carlson--Simpson tree of $[k]^{<\nn}$.  
Then the following hold.
\begin{enumerate}
\item[(a)] For every $F\subseteq [k]^{<d+1}$ we have that $F$ is free if and only if $\mathrm{I}_W(F)$ is free, where $\mathrm{I}_W$
is the canonical isomorphism associated to $W$.
\item[(b)] For every integer $n\meg 1$ we have that $\mathrm{Fr}_n(W)\neq\varnothing$ if and only if $n-1\mik d$.
\item[(c)] For every Carlson--Simpson subtree $V$ of $W$ we have that $\mathrm{Fr}_n(V)\subseteq \mathrm{Fr}_n(W)$.
\end{enumerate}
\end{fact}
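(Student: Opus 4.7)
The plan is to handle each of the three parts in turn, leveraging the characterization in Fact \ref{f112}, the infimum-preservation property from Fact \ref{f24}(b), and the fact that left variable words begin with the variable $x$.

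For part (a), I will verify that the canonical isomorphism $\mathrm{I}_W$ preserves both clauses of Fact \ref{f112}. The key preliminary observation is that $\mathrm{I}_W$ sends the level $[k]^i$ of $[k]^{<d+1}$ into the single level $W(i)$ of $W$, whose elements all have length $|w|+\sum_{j<i}|w_j|$; since each left variable word $w_j$ has length at least one, this quantity is strictly increasing in $i$. Consequently the strict length ordering demanded by clause (1) of Fact \ref{f112} transfers verbatim under $\mathrm{I}_W$. Combined with Fact \ref{f24}(b), which gives $\mathrm{I}_W(\wedge G)=\wedge\mathrm{I}_W(G)$ for any nonempty $G$, the strict length inequality of clause (2) also transfers in both directions.

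For part (b), existence when $n\mik d+1$ is handled by exhibiting an explicit free chain in $[k]^{<d+1}$: pick $t_i\in[k]^{i-1}$ for $i=1,\ldots,n$ with $t_i$ an initial segment of $t_{i+1}$ for every $i<n$; then $\wedge\{t_{i+1},\ldots,t_n\}=t_{i+1}$ and the two conditions of Fact \ref{f112} hold trivially. Part (a) then transports this to a free subset of $W$. For non-existence, suppose $F=\{w_1,\ldots,w_n\}\in\mathrm{Fr}_n(W)$ is enumerated as in Fact \ref{f112}. Since $W$ has only $d+1$ distinct level lengths, the strict inequalities $|w_1|<\cdots<|w_{n-1}|$ force $n-1\mik d+1$. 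It remains to exclude the borderline case $n-1=d+1$: in that case both $w_{n-1}$ and $w_n$ lie on the top level $W(d)$, and clause (2) applied at $i=n-2$ would force $|w_{n-2}|<|\wedge\{w_{n-1},w_n\}|$. But any two distinct elements of $W(d)$ diverge inside some block $w_j(\cdot)$ with $j\mik d-1$, and since $w_j$ is a \emph{left} variable word (beginning with $x$) they in fact diverge at the very first letter of that block. Hence $|\wedge\{w_{n-1},w_n\}|\mik|w|+|w_0|+\cdots+|w_{d-2}|$, which equals $|w_{n-2}|$ because $w_{n-2}$ lies on level $d-1$; this contradicts clause (2).

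Part (c) is immediate: freeness, as formulated in Definition \ref{d111} or equivalently in Fact \ref{f112}, is an intrinsic property of a finite subset of $[k]^{<\nn}$ depending only on lengths and pairwise infima of its members, so if $F\subseteq V$ is free then it remains free as a subset of $W$. The only place where the argument uses nontrivial structure beyond bookkeeping is the non-existence direction of (b), where the left-variable hypothesis on the $w_j$'s (as opposed to merely variable) is exactly what is needed to bound the critical infimum.
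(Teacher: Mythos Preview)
The paper does not give a proof of this fact, stating only that it ``is also straightforward.'' Your argument is correct and supplies precisely the details that justify that claim: the level-length function $i\mapsto |w|+\sum_{j<i}|w_j|$ is strictly increasing (since left variable words have positive length), and together with the infimum-preservation in Fact~\ref{f24}(b) this makes both clauses of Fact~\ref{f112} transfer under $\mathrm{I}_W$ and its inverse, giving~(a). Your treatment of~(b) is the natural one, and the exclusion of the borderline case $n=d+2$ via the left-variable hypothesis is exactly right. Part~(c) is, as you note, immediate from the intrinsic formulation of freeness.

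One minor cosmetic point: in your discussion of~(b) you use $w_j$ both for the members of the enumeration of $F$ coming from Fact~\ref{f112} and for the left variable words in the generating sequence of $W$; renaming one of these would aid readability, but there is no mathematical gap.
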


\subsection{The main result} 

We have the following extension of Proposition \ref{p101}.
\begin{thm} \label{t114} 
Let $k,m,n\in\nn$ with $k\meg 2$ and $m+1\meg n\meg 1$, and $0<\vartheta<\varepsilon\mik 1$. Then there exists a positive integer
$\mathrm{Cor}_{n}(k,m, \vartheta,\varepsilon)$ with the following property. If $W$ is a Carlson--Simpson tree of $[k]^{<\nn}$ of dimension
at least $\mathrm{Cor}_{n}(k,m, \vartheta,\varepsilon)$, then for every family $\{A_w: w\in W\}$ of measurable events in a probability
space $(\Omega,\Sigma,\mu)$ satisfying $\mu(A_w)\meg \varepsilon$ for every $w\in W$ there exists a Carlson--Simpson subtree $U$ of $W$
with $\dim(U)=m$ and such that for every $F\in \mathrm{Fr}_n(U)$ we have
\begin{equation} \label{e113}
\mu\Big( \bigcap_{w\in F} A_w \Big)\meg \vartheta^n.
\end{equation}
\end{thm}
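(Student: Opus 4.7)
The plan is to generalize the three-step scheme of Proposition \ref{p101} --- word representation, Ramsey reduction via Theorem \ref{t13}, explicit correlated construction via a pigeonhole --- from pairs (free 2-sets) to free $n$-sets. The base cases $n = 1$ (trivial, since $\mu(A_w) \meg \varepsilon > \vartheta$) and $n = 2$ (Proposition \ref{p101}) are already established, so I may assume $n \meg 3$.

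First I would define an injective ``representation'' map $\mathrm{R}: \mathrm{Fr}_n([k]^{<\nn}) \to \mathrm{W}(k, \mathcal{L}_n)$ for an appropriate finite alphabet $\mathcal{L}_n$, generalizing the four-case construction in \S 10.1. Given $F = \{w_1, \ldots, w_n\}$ ordered as in Fact \ref{f112}, I encode $F$ coordinatewise along the longest member $w_n$, using symbols of $\mathcal{L}_n$ to record at each depth the transitions where one of the $w_i$'s branches away or becomes ``inactive''. The type of $F$ is then the type of $\mathrm{R}(F)$, and by construction, for any Carlson--Simpson subtree $V$ with generating sequence $\mathbf{v}$, the restriction of $\mathrm{R}$ to $\mathrm{Fr}_{n,\tau}(V)$ maps bijectively onto $\mathbf{v}(k,\mathcal{L}_n,\tau)$, exactly as in Fact \ref{f102}. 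Applying Theorem \ref{t13} to the 2-coloring $c(F) = 1$ iff $\mu(\bigcap_{w \in F} A_w) \meg \vartheta^n$, lifted to $\mathbf{w}(k, \mathcal{L}_n)$ via $\mathrm{R}$, extracts a Carlson--Simpson subtree $V \subseteq W$ of suitable dimension on which every type class $\mathrm{Fr}_{n,\tau}(V)$ is monochromatic.

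Next I would prove an $n$-ary strengthening of Lemma \ref{l26}: there exists $N_0 = N_0(n, \vartheta, \varepsilon)$ such that among any $N \meg N_0$ events of measure $\meg \varepsilon$, some $n$ of them have joint intersection of measure $\meg \vartheta^n$. This follows by convexity: with $X = \sum_{i=1}^N \mathbf{1}_{A_i}$ one has $\mathbb{E}[X^n] \meg (\varepsilon N)^n$, and the $O_n(N^{n-1})$ ``diagonal'' terms (where not all indices are distinct) become negligible once $N$ is large in terms of $n, \vartheta, \varepsilon$, forcing some off-diagonal term above $\vartheta^n$. Then, for each realizable type $\tau$ of length at most $m$, I would construct an explicit parameterized family $v_0, \ldots, v_N \in V$ with the property that every $n$-subset $\{v_{i_1}, \ldots, v_{i_n}\}$ with $i_1 < \cdots < i_n$ is a free $n$-set of type $\tau$. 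This is the $n$-ary analog of Cases 1--4 of Lemma \ref{l104}: each $v_i$ is built by interleaving ``branch'' and ``trunk'' letter blocks of lengths depending on $i$ and $N-i$ in the pattern dictated by $\tau$. The $n$-ary pigeonhole applied to $\{A_{v_i}\}_{i=0}^N$ then yields a free $n$-set of type $\tau$ in color $1$, whence by monochromaticity the whole class $\mathrm{Fr}_{n,\tau}(V)$ lies in color $1$. Taking $U$ to be any $m$-dimensional Carlson--Simpson subtree of $V$ completes the argument, with a quantitative bound of the form $\mathrm{Cor}_n(k, m, \vartheta, \varepsilon) \mik \mathrm{FK}(k, |\mathcal{L}_n|, N_0 \cdot (m+1), 2)$.

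The main technical obstacle lies in the word-representation and parameterized-family constructions: unlike the pair case, which admits the clean four-way taxonomy of \S 10.1, free $n$-sets split into a combinatorially larger family of ``shapes'' indexed by the coincidence pattern of the depths $|w_1| < \cdots < |w_{n-1}| \mik |w_n|$ and the precise branching structure recorded in Fact \ref{f112}. Verifying that the representation $\mathrm{R}$ is well-defined and injective, and that for each realizable type an appropriate single-parameter family $\{v_i\}$ can be built inside any Carlson--Simpson tree of sufficient dimension, will require a careful but mechanical induction on the recursive structure of free sets from Definition \ref{d111}; everything else reduces to straightforward analogs of Lemmas \ref{l103} and \ref{l104}.
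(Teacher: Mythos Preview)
Your proposal contains a genuine gap in the ``parameterized family'' step, and it is not merely technical. For $n\meg 3$ there exist realizable free-$n$-set types $\tau$ for which \emph{no} single sequence $v_0,\ldots,v_N$ can have every $n$-subset of type $\tau$. Concretely, take $k=2$, $n=3$, and the type of the chain $\varnothing\sqsubset(1)\sqsubset(1,2)$: this type records that the first transition is ``extend by $1$'' while the second is ``extend by $2$''. If $\{v_i,v_j,v_\ell\}$ (with $i<j<\ell$) has this type, then the step $v_j\to v_\ell$ is of ``extend-by-$2$'' shape; but in the $3$-subset $\{v_j,v_\ell,v_{\ell'}\}$ (for any $\ell'>\ell$) the step $v_j\to v_\ell$ must be of ``extend-by-$1$'' shape. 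These are incompatible, so the analogue of Lemma~\ref{l104} simply fails for inhomogeneous types once $n\meg 3$. The single-parameter trick of \S10.1 works for pairs precisely because a pair has only \emph{one} transition; for larger $n$ the transitions interact, and your $n$-ary pigeonhole, while correct, cannot be fed a family that hits a prescribed type.

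The paper avoids this obstruction by a completely different induction that never attempts to realize a fixed type with a one-parameter family. The key extra ingredient is Proposition~\ref{p116}: after an initial refinement one passes to a subtree $V$ on which all \emph{conditional} probabilities $\mu(A_w\mid A_v)$ are at least $(\vartheta+\ee)/2$. One then exploits the recursive structure of free sets, writing $H\in\mathrm{Fr}_{n+1}$ as $\{v\}\cup G$ with $G\in\mathrm{Fr}_n$ and $|v|<|\wedge G|$, and applies the inductive hypothesis for $n$ to the conditional measures $\mu(\,\cdot\mid A_v)$ level by level (building a nested sequence $V_0\supseteq V_1\supseteq\cdots$ of subtrees that stabilize on the first $i$ levels at stage $i$). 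The final estimate is then just $\mu\big(\bigcap_{w\in H}A_w\big)=\mu\big(\bigcap_{w\in G}A_w\mid A_v\big)\mu(A_v)\meg\vartheta^{n}\ee>\vartheta^{n+1}$. This sidesteps entirely the need for a type-by-type witness construction.
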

By iterating Theorem \ref{t114}, we can control simultaneously all free subsets of a Carlson--Simpson tree. Specifically, for every $k,m\in\nn$
with $k\meg 2$ and $m\meg 1$, and every $0<\vartheta<\varepsilon\mik 1$ we define a sequence $(N_i)_{i=1}^{m+1}$ in $\nn$ recursively by the rule
\begin{equation} \label{e114}
\begin{cases} N_1=m, \\ N_{i+1}=\mathrm{Cor}_{i+1}(k,N_i, \vartheta,\varepsilon) \end{cases} 
\end{equation}
and we set 
\begin{equation} \label{e115}
\mathrm{Cor}_{\mathrm{Fr}}(k,m,\vartheta,\ee)=N_{m+1}.
\end{equation}
We have the following corollary. 
\begin{cor} \label{c114} 
Let $k,m\in\nn$ with $k\meg 2$ and $m\meg 1$, and $0<\vartheta<\varepsilon\mik 1$. If $W$ is a Carlson--Simpson tree of $[k]^{<\nn}$ of
dimension at least $\mathrm{Cor}_{\mathrm{Fr}}(k,m, \vartheta,\varepsilon)$, then for every family $\{A_w: w\in W\}$ of measurable events
in a probability space $(\Omega,\Sigma,\mu)$ satisfying $\mu(A_w)\meg \varepsilon$ for every $w\in W$ there exists a Carlson--Simpson subtree
$U$ of $W$ with $\dim(U)=m$ and such that for every $F\in \mathrm{Fr}(U)$ we have
\begin{equation} \label{e116}
\mu\Big( \bigcap_{w\in F} A_w \Big)\meg \vartheta^{|F|}.
\end{equation}
\end{cor}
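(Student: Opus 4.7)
The plan is to prove the corollary by iterated application of Theorem \ref{t114}, with the cardinality parameter $n$ decreasing from $m+1$ down to $2$. The recursive choice of $\mathrm{Cor}_{\mathrm{Fr}}(k,m,\vartheta,\ee)=N_{m+1}$ in \eqref{e114}--\eqref{e115} is precisely designed so that this telescoping procedure fits.

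Concretely, set $W_{m+1}:=W$ and observe that $\dim(W_{m+1})\meg N_{m+1}=\mathrm{Cor}_{m+1}(k,N_m,\vartheta,\ee)$. Invoking Theorem \ref{t114} with cardinality $n=m+1$ and target subtree dimension $N_m$, extract a Carlson--Simpson subtree $W_m\subseteq W_{m+1}$ of dimension $N_m$ such that $\mu\bigl(\bigcap_{w\in F}A_w\bigr)\meg\vartheta^{m+1}$ for every $F\in\mathrm{Fr}_{m+1}(W_m)$. Iterate in the obvious way: for $i=m,m-1,\ldots,2$, having obtained $W_i$ of dimension $N_i=\mathrm{Cor}_i(k,N_{i-1},\vartheta,\ee)$, apply Theorem \ref{t114} with cardinality $n=i$ and target dimension $N_{i-1}$ to obtain $W_{i-1}\subseteq W_i$ of dimension $N_{i-1}$ satisfying $\mu\bigl(\bigcap_{w\in F}A_w\bigr)\meg\vartheta^{i}$ for every $F\in\mathrm{Fr}_{i}(W_{i-1})$. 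Set $U:=W_1$, which has dimension $N_1=m$, as required. The hypothesis $\mu(A_w)\meg\ee$ is preserved at every step, since each $W_i$ is a subset of $W$.

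To see that $U$ satisfies the conclusion of the corollary, observe that $W_1\subseteq W_2\subseteq\dots\subseteq W_{m+1}=W$, so by Fact \ref{f113}(c) any $F\in\mathrm{Fr}_i(U)$ with $i\meg 2$ also belongs to $\mathrm{Fr}_i(W_{i-1})$, and hence the estimate $\mu\bigl(\bigcap_{w\in F}A_w\bigr)\meg\vartheta^{|F|}$ secured at the appropriate step of the iteration is still valid on $U$. Singletons $F=\{w\}$ are handled trivially, since $\mu(A_w)\meg\ee\meg\vartheta$. The only bookkeeping detail is to verify that Theorem \ref{t114} is applicable at each step, i.e.\ that $i\mik N_{i-1}+1$. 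Since the input tree in Theorem \ref{t114} must have dimension at least that of the output subtree, one has $\mathrm{Cor}_j(k,N,\vartheta,\ee)\meg N$, and a trivial induction yields $N_i\meg N_1=m$ for every $i$; in particular $N_{i-1}\meg m\meg i-1$ for every $i\in\{2,\ldots,m+1\}$, so the constraint holds throughout. I do not anticipate any hidden obstacle---the argument is essentially a careful bookkeeping of an iterated appeal to Theorem \ref{t114}.
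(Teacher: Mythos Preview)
Your proof is correct and is precisely the iteration the paper has in mind: the paper does not spell out a proof of the corollary beyond the sentence ``By iterating Theorem \ref{t114}, we can control simultaneously all free subsets of a Carlson--Simpson tree,'' and your argument is exactly that iteration, with the bookkeeping (including the verification that $N_{i-1}\meg i-1$ so Theorem \ref{t114} applies at each step) carried out carefully.
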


\subsection{Proof of Theorem \ref{t114}}

For every probability space $(\Omega,\Sigma,\mu)$, every $Y\in\Sigma$ with $\mu(Y)>0$ and every $A\in\Sigma$ let
\begin{equation} \label{e117}
\mu(A \ | \ Y)=\frac{\mu(A\cap Y)}{\mu(Y)}
\end{equation}
be the conditional probability of $A$ relative to $Y$. We have the following proposition.
\begin{prop} \label{p116}
Let $k,d\in\nn$ with $k\meg 2$ and $d\meg 1$, and $0<\eta<\varrho\mik 1$. Then there exists a positive integer $\mathrm{Rel}(k,d,\eta,\varrho)$
with the following property. If $W$ is a Carlson--Simpson tree $W$ of $[k]^{<\nn}$  of dimension at least $\mathrm{Rel}(k,d,\eta,\varrho)$, then
for every family $\{A_w:w\in W\}$ of measurable events in a probability space $(\Omega,\Sigma,\mu)$ satisfying $\mu(A_w)\meg\varrho$ for every
$w\in W$ there exists a Carlson--Simpson subtree $V$ of $W$ with $\dim(V)=d$ and such that for every $w,v\in V$ we have
\begin{equation} \label{e118}
\mu(A_w \ | \ A_v)\meg\eta.
\end{equation}
\end{prop}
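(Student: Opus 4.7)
The plan is to combine Proposition \ref{p101} with a Ramsey-type refinement from Lemma \ref{l25} that pins $\mu(A_w)$ inside a narrow window. The identity $\mu(A_v \mid A_w) = \mu(A_v \cap A_w)/\mu(A_w)$ suggests producing a lower bound on the numerator via Proposition \ref{p101} and bounding the denominator from above. A direct application of Proposition \ref{p101} would only give $\mu(A_v \mid A_w) \meg \vartheta^2$ and so would require $\eta < \varrho^2$; obtaining the stronger range $\eta < \varrho$ demands that $\mu(A_w)$ be controlled from above as well as from below.

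Since $\eta < \varrho$, one has $\sqrt{\eta\varrho} < \varrho$, so I can fix $\delta > 0$ with $(\varrho - 2\delta)^2 \meg \eta \varrho$, and set
\begin{equation*}
\mathrm{Rel}(k,d,\eta,\varrho) = \mathrm{CS}\big( k,\ \mathrm{Cor}_{2}(k,d,\varrho - 2\delta,\varrho - \delta) + 1,\ 1,\ \lceil 1/\delta \rceil + 1 \big).
\end{equation*}
Given $W$ of dimension at least $\mathrm{Rel}(k,d,\eta,\varrho)$ and events $\{A_w : w \in W\}$ with $\mu(A_w) \meg \varrho$, I apply Lemma \ref{l25} to the coloring $w \mapsto \lfloor \mu(A_w)/\delta \rfloor$. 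The output is a Carlson--Simpson subtree $W' \subseteq W$ of dimension $\mathrm{Cor}_{2}(k,d,\varrho - 2\delta,\varrho - \delta)$ on which this coloring is constant; hence there is $c \meg 0$ with $\mu(A_w) \in [c, c+\delta)$ for every $w \in W'$. The hypothesis $\mu(A_w) \meg \varrho$ forces $c > \varrho - \delta$.

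Next I apply Proposition \ref{p101} on $W'$ with parameters $\vartheta = c - \delta$ and $\varepsilon = c$. The ``gap'' $\varepsilon^2 - \vartheta^2 = 2c\delta - \delta^2$ is increasing in $c$, so the quantity $\mathrm{Cor}_{2}(k,d,c-\delta,c)$ is maximized at $c = \varrho - \delta$, and consequently $\dim(W') \meg \mathrm{Cor}_{2}(k,d,c-\delta,c)$. Proposition \ref{p101} then produces a Carlson--Simpson subtree $V \subseteq W'$ with $\dim(V) = d$ and $\mu(A_v \cap A_w) \meg (c - \delta)^2$ for every $v, w \in V$. Therefore
\begin{equation*}
\mu(A_v \mid A_w) = \frac{\mu(A_v \cap A_w)}{\mu(A_w)} \meg \frac{(c - \delta)^2}{c + \delta}.
\end{equation*}
A short calculation shows that the function $c \mapsto (c - \delta)^2/(c+\delta)$ is increasing on $(\delta, \infty)$, so for $c \meg \varrho - \delta$ it is at least $(\varrho - 2\delta)^2/\varrho \meg \eta$ by the choice of $\delta$, which gives the desired conclusion.

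There is no substantial obstacle: the only subtlety is the dual use of Lemma \ref{l25}, which bounds $\mu(A_w)$ from above as well as from below. Without this, the intersection estimate of Proposition \ref{p101} would be insufficient to yield a conditional probability bound in the regime $\varrho^2 \mik \eta < \varrho$.
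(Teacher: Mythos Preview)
Your proof is correct and follows essentially the same strategy as the paper's: first apply Lemma~\ref{l25} to a coloring of $W$ by the approximate size of $\mu(A_w)$, thereby pinning all measures into a narrow window, and then invoke Proposition~\ref{p101} inside that window so that the intersection bound $\vartheta^2$ and the upper bound on $\mu(A_w)$ combine to yield the conditional estimate. The only cosmetic difference is that the paper partitions the range $[\varrho,1]$ multiplicatively (using intervals $[\lambda^{i}\varrho,\lambda^{i+1}\varrho]$ with $\lambda=(\varrho/\eta)^{1/3}$) while you partition additively (intervals of width $\delta$); both choices lead to the same three-line endgame.
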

\begin{proof}
First we introduce some numerical invariants. We set
\begin{equation} \label{e119}
\lambda=\big( \varrho\cdot \eta^{-1}\big)^{\frac{1}{3}}.
\end{equation}
Notice that $\lambda>1$. Also let
\begin{equation} \label{e1110}
r=\Big\lceil\frac{\log\varrho^{-1}}{\log\lambda}\Big\rceil.
\end{equation}
It is easy to see that
\begin{equation} \label{e1111}
0<\eta<\lambda^{-1}\varrho < \varrho < \lambda \varrho <\lambda^2\varrho < \cdots < \lambda^{r-1}\varrho< 1\mik \lambda^{r}\varrho.
\end{equation}
For every $i\in\{0,...,r-1\}$ let
\begin{equation} \label{e1112}
d_i=\mathrm{Cor}_{2}(k, d, \lambda^{i-1}\varrho,\lambda^{i}\varrho)
\end{equation}
and define
\begin{equation} \label{e1113}
\Delta=\max\{d_i: 0\mik i\mik r-1\}.
\end{equation}
We set
\begin{equation} \label{e1114}
\mathrm{Rel}(k,d,\eta,\varrho)= \mathrm{CS}(k,\Delta+1,1,r)
\end{equation}
and we claim that with this choice the result follows. 

Indeed let $W$ be a Carlson--Simpson tree of $[k]^{<\nn}$ with $\dim(W)\meg \mathrm{Rel}(k,d,\eta,\varrho)$ and $\{A_w:w\in W\}$ be a family
of measurable events in a probability space $(\Omega,\Sigma,\mu)$ satisfying $\mu(A_w)\meg \varrho$ for every $w\in W$. By \eqref{e1114} and
Lemma \ref{l26}, there exist a Carlson--Simpson subtree $U$ of $W$ with $\dim(U)=\Delta$ and $i_0\in\{0,...,r-1\}$ such that for every
$u\in U$ we have 
\begin{equation} \label{e1115}
\lambda^{i_0}\varrho\mik \mu(A_u)\mik \lambda^{i_0+1}\varrho.
\end{equation}
Notice that 
\begin{equation} \label{e1116}
\dim(U) \meg \mathrm{Cor}_{2}(k, d, \lambda^{i_0-1}\varrho,\lambda^{i_0}\varrho).
\end{equation}
Therefore, by Proposition \ref{p101}, there exists a Carlson--Simpson subtree $V$ of $U$ with $\dim(V)=d$ and such that
\begin{equation} \label{e1117}
\mu(A_v\cap A_{v'})\meg \lambda^{2i_0-2}\varrho^2
\end{equation}
for every $v,v'\in V$. By \eqref{e1115} and \eqref{e1117}, and taking into account that $\lambda>1$ and $i_0\meg 0$, we conclude that
\begin{equation} \label{e1118}
\mu(A_v \ | \ A_{v'})= \frac{\mu(A_v\cap  A_{v'})}{\mu( A_{v'})} \meg \frac{\lambda^{2i_0-2}\varrho^2 }{\lambda^{i_0+1}\varrho}
= \lambda^{i_0-3}\varrho \meg \lambda^{-3} \varrho\stackrel{\eqref{e119}}{=} \eta.
\end{equation}
The proof is completed.
\end{proof}
We are ready to proceed to the proof of Theorem \ref{t114}.
\begin{proof}[Proof of Theorem \ref{t114}]
The proof proceeds by induction on $n$. Of course the case ``$n=1$" is straightforward, while the case ``$n=2$" follows from Proposition
\ref{p101}. So let $n\in\nn$ with $n\meg 2$ and assume that the numbers $\mathrm{Cor}_n(k,m,\vartheta,\ee)$ have been defined for any
choice of admissible parameters. We fix $k,m\in\nn$ with $k\meg 2$ and $m\meg n$, and $0<\vartheta<\ee\mik 1$. To define the number
$\mathrm{Cor}_{n+1}(k,m,\vartheta,\ee)$ we need to do some preparatory work. First let $f:\nn\to\nn$ be defined by the rule
\begin{equation} \label{e1119}
f(x)=\mathrm{Cor}_{n}\Big(k,x,\vartheta, \frac{\vartheta+\varepsilon}{2}\Big)
\end{equation}
if $x\meg n-1$ and $f(x)=0$ otherwise. Next we define a sequence $(d_i)_{i=0}^{m-n+1}$ in $\nn$ recursively by the rule
\begin{equation} \label{e1120}
\begin{cases} d_{m-n+1}=n-1, \\ d_{i-1}= 1+f^{(k^{2i-1})}(d_i) \end{cases}
\end{equation}
where $f^{(k^{2i-1})}$ stands for the $(k^{2i-1})$-th iteration of $f$. Finally let
\begin{equation} \label{e1121}
\mathrm{Cor}_{n+1}(k,m, \vartheta,\varepsilon)= \mathrm{Rel}\Big(k,d_0,\frac{\vartheta+\varepsilon}{2},\varepsilon\Big).
\end{equation}
We claim that with this choice the result follows.

Indeed, fix a Carlson--Simpson tree $W$ of $[k]^{<\nn}$ with
\begin{equation} \label{e1122}
\dim(W)\meg \mathrm{Rel}\Big(k,d_0,\frac{\vartheta+\varepsilon}{2},\varepsilon\Big).
\end{equation}
Also let $\{A_w: w\in W\}$ be a family of measurable events in a probability space $(\Omega,\Sigma,\mu)$ satisfying $\mu(A_w)\meg\ee$ for every
$w\in W$. By \eqref{e1122} and Proposition \ref{p116}, there exists a Carlson--Simpson subtree $V$ of $W$ with $\dim(V)=d_0$ and such that
\begin{equation} \label{e1123}
\mu(A_v \ | \ A_{v'})\meg \frac{\vartheta+\varepsilon}{2}
\end{equation}
for every $v,v'\in V$. 

Recursively we shall construct a sequence $(V_i)_{i=0}^{m-n+1}$ of Carlson--Simpson trees of $[k]^{<\nn}$ with $V_0=V$ and such that
for every $i\in[m-n+1]$ the following conditions are satisfied.
\begin{enumerate}
\item[(C1)] We have that $V_i$ is a Carlson--Simpson subtree of $V_{i-1}$ of dimension $i+d_i$.
\item[(C2)] For every $j\in\{0,...,i-1\}$ we have that $V_i(j)=V_{i-1}(j)$.
\item[(C3)] For every $v\in V_i(i-1)$ and every $G\in\mathrm{Fr}_n(V_i)$ with $\wedge G\in V_i(j)$ for some $j\in \{i,...,\dim(V_i)-1\}$
we have
\begin{equation} \label{e1124}
\mu\Big( \bigcap_{w\in G} A_w \ | \ A_v\Big)\meg \vartheta^{n}.
\end{equation}
\end{enumerate}
We proceed to the construction. Let $i\in [m-n+1]$ and assume that the Carlson--Simpson trees $V_0,...,V_{i-1}$ have been selected so that
conditions (C1), (C2) and (C3) are satisfied. Set 
\begin{equation} \label{e1125}
d=\dim(V_{i-1})\stackrel{(\mathrm{C1})}{=} i-1+d_{i-1}
\end{equation} 
and denote by $\mathbf{v}=(v,v_0,...,v_{d-1})$ the generating sequence of $V_{i-1}$. Let $T$ be the Carlson--Simpson tree generated by the
Carlson--Simpson sequence $(\varnothing, v_{i},..., v_{d-1})$. Notice that
\begin{equation} \label{e1126}
\dim(T)= d-i\stackrel{\eqref{e1125}}{=}d_{i-1}-1\stackrel{\eqref{e1120}}{=}f^{(k^{2i-1})}(d_i).
\end{equation}
Also observe that $|V_{i-1}(i-1)|=k^{i-1}$ and $|V_{i-1}(i)|=k^i$. By \eqref{e1123}, the definition of the map $f$ in \eqref{e1119} and using
our inductive assumptions successively $k^{2i-1}$ many times, we find a Carlson--Simpson subtree $S$ of $T$ with $\dim(S)=d_i$ and such that
for every $v\in V_{i-1}(i-1)$, every $t\in V_{i-1}(i)$ and every $G\in \mathrm{Fr}_{n}( t^\con S)$ we have 
\begin{equation} \label{e1127}
\mu \Big( \bigcap_{w\in G} A_w \ | \ A_{v}\Big) \meg \vartheta^n.
\end{equation}
We set
\begin{equation} \label{e1128}
V_i=\Big( \bigcup_{j=0}^{i-1} V_{i-1}(j)\Big) \cup \Big(\bigcup_{t\in V_{i-1}(i)} t^\con S\Big).
\end{equation}
It is easily checked that $V_i$ satisfies conditions (C1)--(C3). The recursive selection is thus completed.
 
We are ready for the final step of the argument. We set $U=V_{m-n+1}$ and we observe that
\begin{equation} \label{e1129}
\dim(U)\stackrel{\mathrm{(C1)}}{=}m-n+1+d_{m-n+1}\stackrel{\eqref{e1120}}{=} m.
\end{equation}
Let $H\in\mathrm{Fr}_{n+1}(U)$ be arbitrary. Write $H$ as $\{v\}\cup G$ where $v\in U$ and $G\in \mathrm{Fr}_n(U)$ are such that
$|v|<|\!\wedge G|$. By Fact \ref{f113} and \eqref{e1129}, we see that there exists $i_0\in \{0,...,m-n+1\}$ such that $v\in U(i_0-1)$. 
Invoking conditions (C1) and (C2), we get that $v\in V_{i_0}(i_0-1)$, $G\in\mathrm{Fr}_n(V_{i_0})$ and $\wedge G\in V_{i_0}(j)$ 
for some $j\meg i_0$. Hence, by (C3), we conclude that
\begin{equation} \label{e1130}
\mu \Big( \bigcap_{w\in H} A_w \Big)= \mu\Big( \bigcap_{w\in G} A_{w} \ | \ A_v\Big) \cdot \mu (A_v) \stackrel{\eqref{e1124}}{\meg}
\vartheta^{n}\cdot \varepsilon >\vartheta^{n+1}.
\end{equation}
This completes the proof of the general inductive step and so the entire proof is completed.
\end{proof}


\end{document}